\numberwithin{equation}{section}
\newcommand{\N}{\mathbb{N}}
\newcommand{\R}{\mathbb{R}}
\newcommand{\Z}{\mathbb{Z}}
\newcommand{\ep}{\varepsilon}
\newcommand{\bfx}{\mathbf{x}}
\newcommand{\f}{\frac}
\newcommand{\lmd}{\lambda}
\newcommand{\cF}{\mathcal{F}}
\newcommand{\cH}{\mathcal{H}}
\newcommand{\psiep}{\psi^\ep}
\newcommand{\phiep}{\phi^\ep}
\newcommand{\tpsi}{\tilde{\psi}^\ep}
\newcommand{\Fav}{F_{\text{av}}}
\newcommand{\h}{\f{1}{2}}
\newcommand{\om}{\omega}
\newcommand{\Tmax}{T_{\text{max}}}
\newcommand{\beq}{\begin{equation}}
\newcommand{\eeq}{\end{equation}}
\newcommand{\bsp}{\begin{equation}\begin{split}}
\newcommand{\esp}{\end{split}\end{equation}}
\newcommand{\al}{\alpha}
\theoremstyle{theorem}
\newtheorem*{theorem*}{Theorem}
\newtheorem{thm}{Theorem}[section]
\newtheorem*{definition*}{Definition}
\newtheorem{lemma}[thm]{Lemma}
\newtheorem*{lemma*}{Lemma}
\newtheorem{prop}[thm]{Proposition}
\newtheorem{remark}[thm]{Remark}
\title{Averaging of strong magnetic nonlinear Schr\"{o}dinger equations in the energy space}
\author{\small{JUMPEI KAWAKAMI}\thanks{Department of mathematical Sciences, Kyoto University, Kyoto 606-8502 JAPAN. 	
		E-mail: jumpeik@kurims.kyoto-u.ac.jp}}{}
\date{}
\begin{document}

\maketitle

\markboth{\centerline{\footnotesize{JUMPEI KAWAKAMI}}}{\centerline{\footnotesize{Averaging of strong magnetic NLSs in energy space}}}

\renewcommand{\thefootnote}{\fnsymbol{footnote}}
\footnote[0]{
	\noindent
	2020 \textit{Mathematics Subject Classification.} 35Q55.
	
	\textit{Key words and phrases.} Nonlinear  Schr\"{o}dinger equation, strong magnetic fields, high frequency averaging, energy space, nonic nonlinear power.}
\renewcommand{\thefootnote}{\arabic{footnote}}

\begin{abstract}
In this study, we consider two nonlinear Schr\"{o}dinger-type models that are derived by R L.~Frank, F.~M\'{e}hats, C.~Sparber (2017) to study 3D nonlinear Schr\"{o}dinger equations under strong magnetic fields. One model is derived by spatial scaling and the other is obtained by averaging the spatial scaled model over time. We study these models in the energy space to obtain global solutions and improve the convergence result over an arbitrarily long time. Regarding the nonic nonlinear power of the time averaged model,  we prove  a scattering result under a scaling-invariant small-energy condition, which underlines energy-criticality of the nonic case.
\end{abstract}

\section{Introduction}
\label{ntro}
\subsection{Derivation of the NLS-type models}

 In this paper, we study two nonlinear Schr\"{o}dinger-type models  with strong magnetic confinement. These models were introduced in \cite{2}  to study the asymptotic scaling limit of nonlinear Schr\"{o}dinger equations (NLS) under strong magnetic fields, which appear in \cite{BEC}. We will follow the derivation in \cite[Section 1]{2}. First, we consider the following 3-dimensional NLS-type model:
\begin{equation}
\label{eq1.1}
	i\partial_{t}\psi= \frac{1}{2}(-i\nabla_{\bfx}+A^{\ep}(\bfx))^2\psi +V(z)\psi +\beta^{\ep} |\psi|^{2\sigma}\psi,
\end{equation}
where $(t,\mathbf{x})\in \R \times \R^3 $, $\sigma \in \mathbb{N} $, and $\beta^\ep \in \mathbb{R}$.  We will denote the spatial variables by $\bfx=(x_1,x_2,z)$ and state $x=(x_1,x_2)$. The real-valued potential $V$ is assumed to be smooth and sub-quadratic:
\begin{equation}
\label{V}
 \f{d^\al}{dz^\al}V(z) \in L^\infty(\R) \qquad \text{for any}\quad  \al\ge2.
\end{equation}
For example, $V(z)=\pm z^2$.
The vector potential $A^\ep$ is given by 
\[ A^{\ep}(\mathbf{x})=\frac{1}{2\ep^2}(-x_2,x_1,0), \]
where $0<\ep \ll 1$ is a small parameter. Because $A^{\ep}$ is divergence free, $\nabla_{\mathbf{x}} \cdot A^{\ep}=0$,
\[ (-i \nabla_{\bfx} +A^{\ep}(\bfx))^2=-\Delta_{\mathbf{x}}+\frac{1}{4\ep^4}|x|^2-\frac{i}{\ep^2}(x_1 \partial_{x_2}-x_2 \partial_{x_1}) .\]
The corresponding magnetic field is 
\[ B^{\ep}=\nabla \times A^{\ep}=\frac{1}{\ep^2}(0,0,1) \in \R^3.\]
This is a constant magnetic field in the $z$-direction with field strength $|B^\ep|=\f{1}{\ep^2} \gg 1$.

The objective of \cite{2} (which is also our objective) was to  analyze ``the strong magnetic confinement limit'' as $\ep \to +0$. Then, the initial data for equation (\ref{eq1.1}) is assumed of the form 
\[ \psi(0,\bfx)=\ep^{-1} \psi_0 (\frac{x}{\ep},z). \]
This assumption implies that the initial wave function is already confined at the scale $\ep$ in the $x$-directions. Accordingly, we rescale
\[  x'=\f{x}{\ep},  \quad  z'=z, \quad  \psi^\ep (t,x',z')=\ep \psi(t,\ep x', z'). \]
and  let $\lmd \in \R$ be a fixed constant. Finally, we rewrite $\beta^\ep=\lmd \ep^{2\sigma} \ll 1$. 

Thorough these procedures, equation (\ref{eq1.1}) becomes 
\begin{equation}
\label{ep}
i\partial_t \psiep=\f{1}{\ep^2}\cH \psiep+\cH_z \psiep +\lmd |\psiep|^{2\sigma} \psiep \tag{$\ep$-NLS}
\end{equation}
where, using $x^\perp=(-x_2,x_1)$ we denote
\[\cH:=\f{1}{2}(-i\nabla_x+\f{1}{2}x^\perp)^2=-\f{1}{2}\Delta_x+\f{1}{8}|x|^2-\f{i}{2}x^\perp\cdot\nabla_x , \]
and we state
\[ \cH_z:=-\f{1}{2}\partial_z^2+V(z). \]
Solution to (\ref{ep}) conserves mass $M$ and Hamiltonian $E^\ep$, which are defined as
\begin{equation}\label{mass}
M[\psi]:= \int_{\R^3} |\psi(\bfx)|^2 d\bfx
\end{equation}
\begin{equation}\label{eneggy}
\begin{split}
E^\ep[\psi]&:=\f{1}{2}\Big(\f{1}{2\ep^2}\int_{\R^3} |\nabla_x \psi|^2 d\bfx + \f{1}{8\ep^2} \int_{\R^3} |x\psi|^2  d\bfx-\f{i}{2\ep^2}\int_{\R^3} (x^\perp\cdot\nabla_x\psi)\overline{\psi} d\bfx\\
&\quad + \h \int_{\R^3} |\partial_z\psi|^2 d\bfx+\int_{\R^3}V(z)|\psi(\bfx)|^2 d\bfx\Big) + \f{\lmd}{2(\sigma+1)} \int_{\R^3} |\psi|^{2\sigma+2} d\bfx.
\end{split} 
\end{equation}

The operator $\cH$ is essentially self-adjoint on $C_0^\infty(\R^2) \subset L^2(\R^2)$ with the pure point spectrum given by
\begin{equation}\label{specn}
\text{spec}\cH=\{n+\h : n \in \N_0\}.
\end{equation}

Based on (\ref{ep}), $\cH$ induces high frequency oscillations in time $\propto\mathcal{O}(\ep^{-2})$ within the solution $\psiep$. By filtering these oscillations, we expect to observe the following limit 
\begin{equation}\label{smc}
 \phiep(t,\bfx):= e^{it\cH/\ep^2}\psiep(t,\bfx)\longrightarrow\phi(t,\bfx) \quad \text{as}\quad \ep \to +0
\end{equation}
in a strong norm. To describe the behavior of the limit $\phi$,  \cite{2} used the space (as a natural  functional framework),
\[ \Sigma^2=\{u \in H^2(\R^3): |\bfx|^2u \in L^2(\R^3)\}\]
\[ \| u\|_{\Sigma^2}:=(\| u \|_{H^2}^2+\| |\bfx|^2u\|_{L^2}^2)^{1/2},  \]
which is a Banach algebra. First, we introduce the following nonlinear function,
\begin{equation}\label{Fnl}
\begin{split}
F(\theta,u)&:=e^{i\theta\cH}(|e^{-i\theta\cH}u|^{2\sigma} e^{-i\theta\cH}u)\\
&=e^{i\theta(\cH-1/2)}(|e^{-i\theta(\cH-1/2)}u|^{2\sigma} e^{-i\theta(\cH-1/2)}u),
\end{split}
\end{equation}
where $F\in C(\R\times \Sigma^2, \Sigma^2)$. Based on (\ref{specn}), the operator $e^{i\theta(\cH-\h)}$ is $2\pi$-periodic with respect to $\theta$; thus, $F$ is also $2\pi$-periodic.
Then, $\phiep$ satisfies the following equation,
\begin{equation}\label{eplmt}
i\partial_t\phiep(t)= \cH_z\phiep(t)+\lmd F(\f{t}{\ep^2},\phiep(t)).
\end{equation}
Defining the averaged function of $F$ by
\begin{equation}\label{Fav}
\Fav(u):=\f{1}{2\pi}\int_0^{2\pi} e^{i\theta\cH}(|e^{-i\theta\cH}u|^{2\sigma}e^{-i\theta\cH}u)d\theta
\end{equation}
 and letting $\ep\to +0$ in (\ref{eplmt}), the limiting model is formally derived as
\begin{equation}
\label{lmt}
i\partial_t \phi = \cH_z \phi+\lmd \Fav(\phi) \tag{L-NLS}
\end{equation}
with the initial data $\phi(0,\bfx)=\psi_0(x,z)$. Here, we considered the fact that $\cH$ commutes with $\partial_z^2$ and $V(z)$.

The nonlinearity of \eqref{Fav} has a similar structure to the resonant system for NLS with harmonic trapping, cf. \cite{LBL, CR, RH}. \\

Note that equation (\ref{lmt}) is a model in 3 spatial dimensions. (\ref{lmt}) describes the resulting averaged particle dynamics. Solution to (\ref{lmt}) conserves mass $M$, defined in (\ref{mass}), and energy $E$,  which is defined as
\begin{equation}\label{en}
E[\phi]:= \h\int_{\R^3} |\partial_z \phi|^2 d\bfx+\int_{\R^3}V(z)|\phi|^2 d\bfx+ \f{\lmd}{2\pi(\sigma+1)} \int_{\R^3}\int_{0}^{2\pi}|e^{-i\theta \cH}\phi |^{2\sigma+2}  d\theta d\bfx.
\end{equation}

\subsection{Previous results}
The local well-posedness of (\ref{ep}) for each $\ep$ in the space $\Sigma^2$ follows immediately from the results in \cite{1}. Then, \cite{2} showed the solutions are uniformly bounded with respect to sufficiently small $\ep$ and converge to the corresponding solution to (\ref{lmt}) in the following sense.
 
 \begin{theorem*}[\cite{2} Theorem $1.1$] 
  Let $V$ satisfy (\ref{V}), $\sigma \in \N$, and $\psi_0 \in \Sigma^2$.
  \begin{enumerate}
\item
There is a $T_{\text{max}} \in (0,\infty]$ and a unique maximal solution $\phi \in C([0,T_{\text{max}}), \Sigma^2) \cap C^1([0,T_{\text{max}}), L^2(\R^3))$ of the limiting equation (\ref{lmt}), such that
\[ \|\phi(t, \cdot)\|_{L^2}=\|\psi_0\|_{L^2}, \quad E[\phi(t, \cdot)]=E[\psi_0], \quad  \forall t\in [0, \Tmax). \]

\item
For all $T \in (0,T_{\text{max}})$ there are $\ep_T >0$, $C_T >0$ such that, for all $\ep\in(0,\ep_T]$, equation (\ref{ep}) admits a unique solution $\psiep \in  C([0,T], \Sigma^2) \cap C^1([0,T], L^2(\R^3))$, which is uniformly bounded with respect to $\ep \in (0, \ep_T]$ in $L^\infty((0,T), \Sigma^2)$ and satisfies 
\[ \max_{t\in [0,T]}\|\psiep(t,\cdot)-e^{-it\cH/\ep^2} \phi(t,\cdot) \|_{L^2} \leqslant C_T \ep^2. \]
\end{enumerate}
 \end{theorem*}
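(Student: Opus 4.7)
\textbf{Step 1: maximal $\Sigma^2$-solution of (\ref{lmt}).} I would run a standard Duhamel contraction in $C([0,T_0],\Sigma^2)$ for the map
\[
\phi\mapsto e^{-it\cH_z}\psi_0-i\lmd\int_0^t e^{-i(t-s)\cH_z}\Fav(\phi(s))\,ds.
\]
The sub-quadratic assumption (\ref{V}) guarantees that $e^{-it\cH_z}$ is bounded on $\Sigma^2$ with a locally integrable operator norm, by Fujiwara/Carles-type estimates for sub-quadratic metaplectic flows. Since $\Sigma^2(\R^3)$ is a Banach algebra and $e^{i\theta\cH}$ acts boundedly and $2\pi$-periodically on $\Sigma^2$, $\Fav$ is locally Lipschitz from $\Sigma^2$ into itself, and the fixed point produces a unique maximal solution $\phi$. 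Mass conservation follows from $\mathrm{Im}\langle F(\theta,u),u\rangle_{L^2}=0$ for every $\theta$ (gauge-invariance of the pointwise nonlinearity plus unitarity of $e^{\pm i\theta\cH}$); energy conservation is obtained by testing (\ref{lmt}) against $\partial_t\overline{\phi}$ after verifying that $\Fav$ is the $L^2$-gradient of the nonlinear part of $E$.

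\textbf{Step 2: filtered equation and oscillation primitive.} Setting $\phi^\ep:=e^{it\cH/\ep^2}\psi^\ep$ eliminates the $\ep^{-2}$-term and yields (\ref{eplmt}); since $e^{-it\cH/\ep^2}$ is $L^2$-unitary, the target reduces to $\|\phi^\ep-\phi\|_{L^2}\le C_T\ep^2$. To exploit the oscillation I introduce the primitive
\[
G(\theta,u):=\int_0^\theta\bigl(F(\tau,u)-\Fav(u)\bigr)\,d\tau,
\]
which is $2\pi$-periodic in $\theta$ (the mean of $F-\Fav$ vanishes) and inherits the $\Sigma^2$-regularity and Lipschitz bounds of $F$, with Fréchet derivative $D_uG(\theta,\cdot)$ bounded on $L^2$ on bounded subsets of $\Sigma^2$. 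Since $F(s/\ep^2,u)-\Fav(u)=\ep^2\partial_s G(s/\ep^2,u)$ for frozen $u$, integration by parts in $s$ inside the Duhamel formula for $\phi^\ep$ produces
\[
\phi^\ep(t)-\phi(t)=-i\lmd\int_0^t e^{-i(t-s)\cH_z}\bigl[\Fav(\phi^\ep)-\Fav(\phi)\bigr]\,ds+\ep^2 R^\ep(t),
\]
where $R^\ep(t)$ collects the boundary contribution $G(t/\ep^2,\phi^\ep(t))-e^{-it\cH_z}G(0,\psi_0)$, a commutator integral involving $\cH_z e^{-i(t-s)\cH_z}G$, and an interior term proportional to $D_uG\cdot\partial_s\phi^\ep$. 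Using $\partial_s\phi^\ep=-i\cH_z\phi^\ep-i\lmd F(s/\ep^2,\phi^\ep)$, each component of $R^\ep$ is $O(1)$ in $L^2$ as soon as $\phi^\ep$ is bounded in $\Sigma^2$.

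\textbf{Step 3: bootstrap in $\Sigma^2$ and Gronwall in $L^2$.} Fix $T<\Tmax$ and set $M_T:=1+\sup_{[0,T]}\|\phi\|_{\Sigma^2}$. Local well-posedness of (\ref{ep}) from \cite{1}, transported to $\phi^\ep$ by unitary conjugation, gives a maximal lifespan $T^\ep$ with $\|\phi^\ep(0)\|_{\Sigma^2}=\|\psi_0\|_{\Sigma^2}\le M_T$; let $T_\star^\ep\in(0,\min(T,T^\ep)]$ be the largest time on which $\|\phi^\ep\|_{\Sigma^2}\le 2M_T$. Replaying the identity of Step~2 at the $\Sigma^2$-level, using that the weights $\cH$ and $|\bfx|^2$ have controlled commutators with $e^{i\theta\cH}$ and $e^{-is\cH_z}$, yields $\|\phi^\ep-\phi\|_{L^\infty([0,T_\star^\ep],\Sigma^2)}\le C_T\ep^2$, which for $\ep$ small strictly improves the bootstrap hypothesis and thus forces $T_\star^\ep=T$. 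Inserting this uniform $\Sigma^2$-bound into the $L^2$-version of the identity and applying Gronwall then delivers the stated $\|\phi^\ep-\phi\|_{L^\infty([0,T],L^2)}\le C_T\ep^2$.

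\textbf{Main obstacle.} The closure of the $\Sigma^2$-bootstrap is the crux. The conserved Hamiltonian $E^\ep$ only controls a $\Sigma^1$-type quantity uniformly in $\ep$: the $\ep^{-2}\cH$-contribution to $E^\ep$ is of order one but does not bound $\|\cH\psi^\ep\|_{L^2}$, so there is no a priori conservation law yielding uniform $\Sigma^2$-control of $\psi^\ep$. Higher-order control must be \emph{produced} dynamically by the averaging argument itself, which in turn requires that $G(\theta,\cdot)$ and $D_uG(\theta,\cdot)$ be bounded uniformly in $\theta$ on the $\Sigma^2$-topology. The careful bookkeeping of commutators of $\cH_z$, the weights defining $\Sigma^2$, and the nonlinearity with $e^{i\theta\cH}$ is the technical heart of the proof.
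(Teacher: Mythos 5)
Your overall strategy --- filtering by $e^{it\cH/\ep^2}$, introducing the primitive $G(\theta,u)=\int_0^\theta(F(\tau,u)-\Fav(u))\,d\tau$ of the mean-zero oscillation, integrating by parts in $s$ inside Duhamel, and closing with a Gronwall/bootstrap --- is exactly the mechanism the paper uses in Section \ref{Sml} (where your $G$ is the paper's $\cF(s,u)$ and your three remainder contributions are the paper's $I_1$, $I_2$, $I_3$), and is the same averaging scheme as in the cited source. Steps 1 and 2 are sound.

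The genuine gap is in Step 3, where you claim that ``replaying the identity at the $\Sigma^2$-level'' yields $\|\phi^\ep-\phi\|_{L^\infty\Sigma^2}\le C_T\ep^2$ and thereby closes the bootstrap. This cannot work for data that is merely in $\Sigma^2$: the remainder term involving $\cH_z\cF$ loses two derivatives (cf.\ Lemma \ref{sigmaz2}, $\|\cH_z f\|_{\Sigma_z^k}\lesssim\|f\|_{\Sigma_z^{k+2}}$), and the interior term involving $D_uG[\partial_s\phi^\ep]$ likewise contains $\cH_z\phi^\ep$, so an $O(\ep^2)$ bound for $R^\ep$ in $\Sigma^2$ requires a priori $\Sigma^4$-control, which is not available. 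A direct differential inequality for $\|\phi^\ep\|_{\Sigma^2}$ is of Riccati type and does not give uniform bounds up to an arbitrary $T<\Tmax$ either, so the $\Sigma^2$ bootstrap does not close as written. The device you are missing --- used in the paper's Propositions \ref{3} and \ref{4} --- is to regularize the data by Hermite projections $\phi_{0,n}=P_{\le n}\psi_0$, compare $\psi^\ep$ with $\tilde\psi_n=e^{-it\cH/\ep^2}\phi_n$ so that the integration by parts acts on the smooth flow $\phi_n$ (for which $\|\phi_n\|_{\Sigma^3}$, $\|\phi_n\|_{\Sigma^4}$ are finite, at a cost growing in $n$), and then balance $n$ against $\ep$. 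This closes the bootstrap with $\|\phi^\ep-\phi\|_{\Sigma^2}\le C(\delta+\|\psi_0^\ep-\phi_0\|_{\Sigma^2})$ for arbitrary $\delta$ (hence the uniform $\Sigma^2$ bound), while the $O(\ep^2)$ rate survives only at the $L^2$ level, where the remainder needs just $\Sigma^2$-norms of $\phi$ --- which is precisely how the cited theorem is stated.
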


The authors remark that this theorem is in the same spirit as earlier results, such as \cite{N.Ben2, N.Ben, N.Ben3, FF, M}. The averaging method is also used in recent studies, cf. \cite{mana, iwa}.\\
  
 The authors comment in \cite[Remark 2.2]{2} that we would need to treat these solutions in $\Sigma^1=\{ u\in H^1(\R^3): |\bfx|u\in L^2 \}$ to obtain a global (i.e. $\Tmax=+\infty$) result; on the other hand, we need to impose a severe restriction on $\sigma$ to prove Lipschitz continuity of the nonlinearity in 3 dimensions.\\

Our purpose is to obtain global solutions to (L-NLS) (and ($\ep$-NLS)) in $\Sigma^1$ and improve the abovementioned convergence result in time.    
Consequently, we use conservation laws of
\begin{equation}
  \int_{\R^3}  -\f{i}{2}(x^\perp\cdot\nabla_x \psi) \overline{\psi} d\bfx 
\end{equation}
for solution to (\ref{ep}) and 
\begin{equation}\label{ke0}
	K[\phi]:= \h\int_{\R^3} |\nabla_x \phi|^2 d\bfx+ \f{1}{8}\int_{\R^3} |x\phi|^2 d\bfx 
\end{equation}
for solution to (\ref{lmt}). We also use Strichartz's estimates for $\{ e^{it\cH/\ep^2} \}_{t \in \R}$ and $ \{e^{it\cH_z} \}_{t \in \R}$. As a result, in addition to our original goal, we prove a scattering result for the nonic nonlinear power of (\ref{lmt}),  under a scaling-invariant small-energy condition, which indicates energy-criticality of the nonic case.

 \subsection{Main results}
 
In this study, we consider only the cases $\lmd=+1$ or $\lmd=-1$, in which  (\ref{ep}) and (\ref{lmt}) are defocusing or focusing, respectively. We prove the local well-posedness for the equations in the spaces, 
\[\Sigma^1:=\{u \in H^1(\R^3): |\bfx|u \in L^2(\R^3)\} , \quad  \| u\|_{\Sigma^1}:=(\| \nabla_\bfx u \|_{L^2(\R^3)}^2+\| |\bfx|u\|_{L^2(\R^3)}^2)^{1/2}\]
\[L_x^2 \Sigma_z^1:=\{u \in L^2(\R^3): \partial_zu, |z|u \in L^2(\R^3)\},  \quad  \| u\|_{L_x^2 \Sigma_z^1}:=(\| \partial_z u \|_{L^2(\R^3)}^2+\|zu\|_{L^2(\R^3)}^2)^{1/2}\]
and obtain the global solutions mainly in the defocusing case. Based on the global existence of the solution to (\ref{lmt}), we extend the convergence result in \cite{2} over an arbitrarily long time.

Furthermore, we find that (\ref{lmt}) is energy-critical when $\sigma=4$, which is a nonic nonlinear power:
\[ i\partial_t \phi(t) = \cH_z \phi(t)+\f{\lmd}{2\pi}\int_0^{2\pi} e^{i\theta\cH}(|e^{-i\theta\cH}\phi(t)|^{8}e^{-i\theta\cH}\phi(t))d\theta. \]
That is, we prove a scattering result with nonic nonlinear power of (\ref{lmt}),  under a scaling-invariant small-energy condition. This is interesting because (\ref{ep}) is energy-critical in the quintic case. Thus, this property, global well-posedness of (\ref{lmt}), and convergence result lead us expect global existence of the solutions to (\ref{ep}) or NLS with anisotropic harmonic trapping in the energy-supercritical case (e.g. septic case in 3 dimensions) under appropriate conditions. However, in the nonic case, global existence of the solution to (\ref{lmt}) for a large data is an  open problem even in the defocusing case. \\

We will now state the main results. In all following theorems, we weaken the assumption \eqref{V} and assume that there exists $V''$, which is 
\begin{equation} \label{asV}
	V''\in L^\infty(\R).
\end{equation}

\begin{thm}[Well-posedness of the magnetic Schr\"{o}dinger equation]\label{main1} \quad\\
For $\sigma =1,2$ and each $\ep>0$, (\ref{ep}) is locally well-posed in $\Sigma^1$.  Suppose that  $[0, \Tmax^\ep)$ is the maximal lifespan of such solution.  
\begin{enumerate}
\item In the case $\sigma=1$.
\begin{itemize}
\item 
If $\lmd =+1$, $\Tmax^\ep=+\infty$.
\item
If $\lmd=-1$ and $V$ is bounded below, there exists $\ep_*=\ep_*(\|\psi_0\|_{\Sigma^1})>0$ such that for all $\ep \in (0,\ep_*]$, $\Tmax^\ep=+\infty$.  
\item
In the other cases, (that is all cases,) $\Tmax^\ep \to +\infty$ as $\ep\to+0$.	
\end{itemize}

\item In the case $\sigma=2$, if $\lmd=+1$, it holds $\Tmax^\ep \to +\infty$ as $\ep\to+0$.
\end{enumerate}

\end{thm}

\begin{thm}[Well-posedness of the limit equation]\label{main2} 
\quad\\
\begin{enumerate}
\item For $\sigma=1$, (\ref{lmt}) is globally well-posed in $L_x^2\Sigma_z^1$. 

\item For $\sigma=2,3,4$, (\ref{lmt}) is locally well-posed in $\Sigma^1$. If $\sigma =2,3$  and $\lmd=+1$, the solution exists globally.
\end{enumerate}
\end{thm}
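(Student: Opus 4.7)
The plan is to run a standard Duhamel contraction on
\begin{equation*}
\phi(t)=e^{-it\cH_z}\phi_0-i\lmd\int_0^t e^{-i(t-s)\cH_z}\Fav(\phi(s))\,ds,
\end{equation*}
and then use the conservation laws of $M$, $A$, and $E$ recalled in the introduction to extend the local solutions to all time in the cases stated. Three structural facts drive every estimate: (a) $e^{\pm i\theta\cH}$ acts only in $x$, is an $L^2(\R^2)$-isometry, and commutes with $\partial_z$ and with multiplication by $z$, hence is an isometry of $L^2_x\Sigma^1_z$; (b) $\cH_z$ is essentially self-adjoint by \eqref{V}, so $e^{-it\cH_z}$ is a unitary group on $L^2_z$ and a bounded group on $\Sigma^1_z$; (c) the one-dimensional embedding $\Sigma^1_z\hookrightarrow L^\infty_z$ and the two-dimensional embedding $\Sigma^1_x\hookrightarrow L^p_x$ for every $p<\infty$ (the quadratic moment $|x|u\in L^2$ upgrading 2D Sobolev) convert polynomial nonlinearities into bilinear estimates.

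For $\sigma=1$ in $L^2_x\Sigma^1_z$, the nonlinear bound
\begin{equation*}
\|\Fav(u)-\Fav(v)\|_{L^2_x\Sigma^1_z}\leq C(\|u\|_{L^2_x\Sigma^1_z}^2+\|v\|_{L^2_x\Sigma^1_z}^2)\|u-v\|_{L^2_x\Sigma^1_z}
\end{equation*}
follows from (a)--(c) by exchanging the $\theta$-integral with the spatial integrals and applying $H^1_z\hookrightarrow L^\infty_z$ fiberwise in $x$. Banach's fixed point then gives local well-posedness, and since the cubic power is $L^2$-subcritical in the single dispersive direction $z$, mass conservation combined with a Gr\"{o}nwall estimate (using the sub-quadratic $V$) yields an a priori $L^2_x\Sigma^1_z$-bound independent of the sign of $\lmd$, whence globality.

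For $\sigma\in\{2,3,4\}$ in $\Sigma^1$, the contraction is run in $\Sigma^1$ using (c) to close the nonlinear estimate; the main technical point is handling $\nabla_x\Fav(\phi)$, for which I would work with the magnetic momenta $\pi=-i\nabla_x+\tfrac{1}{2}x^\perp$. These satisfy $\cH=\tfrac{1}{2}|\pi|^2$ and $[\pi_1,\pi_2]=-i$, so that $e^{i\theta\cH}\pi\,e^{-i\theta\cH}=R(\theta)\pi$ for a $\theta$-dependent rotation $R(\theta)$; the filtration by $e^{-i\theta\cH}$ therefore preserves the $\Sigma^1$-geometry. For globalization when $\sigma\in\{2,3\}$ and $\lmd=+1$, conservation of $M$ and $A$ controls $\|\phi\|_{L^2}$, $\|\nabla_x\phi\|_{L^2}$ and $\||x|\phi\|_{L^2}$; conservation of $E$ together with the defocusing sign controls $\|\partial_z\phi\|_{L^2}$ up to a $V$-contribution bounded by $\|z\phi\|_{L^2}$; the latter weight is controlled using $[z,\cH_z]=-\partial_z$ in a coupled Gr\"{o}nwall inequality on each compact time interval, which suffices to iterate the local theory to $[0,\infty)$.

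The chief obstacle is the nonlinear estimate in $\Sigma^1$ when $\sigma=3$ and, especially, $\sigma=4$: classical 3D Sobolev only gives $H^1\hookrightarrow L^6$, which handles $\sigma\leq 2$ at best, so one genuinely relies on the harmonic confinement in $x$ to retrieve $L^p_x$-integrability for all $p<\infty$ and on the $\theta$-averaging to tame commutators with $\cH$. The nonic case $\sigma=4$ is moreover scaling-critical in $\Sigma^1$, so that the contraction constant does not shrink with the length of the time interval; instead one closes it by smallness of a Strichartz-type norm of $e^{-it\cH_z}\phi_0$ on a sufficiently short initial interval, consistent with the introduction's remark that nonic-(L-NLS) is energy-critical and that global existence for large data is open in that case.
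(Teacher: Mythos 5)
Your overall architecture --- Duhamel contraction followed by conservation of $M$, $A$, $E$ and a Gr\"{o}nwall bound on $\|z\phi\|_{L^2}$ to globalize --- matches the paper, and the conjugation identity $e^{i\theta\cH}\pi e^{-i\theta\cH}=R(\theta)\pi$ is a legitimate alternative to the paper's use of $\cH_0^{1/2}$ together with the norm equivalence (\ref{eqH}). But there is a genuine gap at the heart of the local theory: your structural facts (a)--(c) do not close the nonlinear estimates. The tool that actually makes them close in the paper is the \emph{Strichartz estimate in the averaging variable $\theta$} for the group $\{e^{-i\theta\cH}\}$ (and, for $\sigma=3,4$, additionally Strichartz estimates in $t$ for $\{e^{-it\cH_z}\}$ in mixed norms). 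Concretely, for $\sigma=1$ your claimed trilinear bound in $L^2_x\Sigma^1_z$ fails if one merely exchanges integrals and applies $H^1_z\hookrightarrow L^\infty_z$ fiberwise: writing $v=e^{-i\theta\cH}u$, that route gives
\begin{equation*}
\big\||v|^2v\big\|_{L^2_{\bfx}}^2\lesssim\int_{\R^2}\|v(x,\cdot)\|_{L^2_z}^{4}\,\|\partial_z v(x,\cdot)\|_{L^2_z}^{2}\,dx,
\end{equation*}
and the integrand lies only in $L^{1/3}_x$ (each squared factor is merely $L^1_x$), so the $x$-integral is not bounded by $\|u\|_{L^2_x\Sigma^1_z}^6$; nothing in (a)--(c) gains $x$-integrability from bare $L^2_x$ data. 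The paper closes this with the dual and homogeneous Strichartz pair $\|\int_0^{2\pi}e^{i\theta\cH}g\,d\theta\|_{L^2_x}\lesssim\|g\|_{L^{4/3}_{\theta,x}}$ and $\|e^{-i\theta\cH}w\|_{L^4_{\theta,x}}\lesssim\|w\|_{L^2_x}$, which is precisely why the $\sigma=1$ result can live in $L^2_x\Sigma^1_z$ with no $x$-regularity at all.

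The same omission bites for $\sigma\ge2$. Concentration scaling shows $\Sigma^1(\R^3)\not\hookrightarrow L^p(\R^3)$ for any $p>6$ (the harmonic confinement fixes decay, not high-frequency concentration), and the mixed embeddings available from $\Sigma^1$ interpolate only between $L^{p}_xL^2_z$ ($p<\infty$) and $L^2_xL^\infty_z$, so the septic and nonic nonlinearities cannot be estimated in $L^\infty_t\Sigma^1$ by Sobolev embedding alone. Even for $\sigma=2$, the quantity $\|\partial_z u\|_{L^6_xL^2_z}$ that arises is \emph{not} controlled by $\|u\|_{\Sigma^1}$ (that would require $\nabla_x\partial_z u\in L^2$); the paper obtains it as $\|\partial_z e^{-i\theta\cH}u\|_{L^3_\theta L^6_xL^2_z}\lesssim\|\partial_z u\|_{L^2_\bfx}$, again by Strichartz in $\theta$. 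For $\sigma=3$ the paper must further work in the auxiliary space $L^{12}_t\Sigma_{z,x}^{3,2}$ (Strichartz for $e^{-it\cH_z}$ with $L^3_z$), since Lemma \ref{sigmaz} is unavailable for $L^p_z$ with $p>2$ --- a step your proposal does not anticipate; only for $\sigma=4$ do you invoke an auxiliary Strichartz-type norm, and there your diagnosis (criticality, smallness of $\|e^{-it\cH_z}\phi_0\|$ on a short interval) does agree with Theorem \ref{wplmt4}. In short, the skeleton is right, but the proposal is missing the one estimate on which every step of the paper's proof rests.
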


\begin{remark}
In each case, persistence of regularity also holds. That is, if an initial data belongs to $\Sigma^k$ for some integer $k$, the corresponding solution to (\ref{ep}) or (\ref{lmt}) maintains the same regularity as long as the solution exists in the larger space.\\
\end{remark}

Next, we consider the case $\sigma=4$ and $V\equiv0$ in the space 
\[ \Sigma_{0}^1=\{u \in H^1(\R^3):  |x| u \in L^2(\R^3)\},  \quad  \| u\|_{\Sigma_{0}^1}:=(\| \nabla_\bfx u \|_{L^2}^2+\||x|u\|_{L^2}^2)^{1/2}\]
instead of $\Sigma^1$. In the following theorem, we also use the norm
\[ \|u \|_{L_z^2\Sigma_x^1}:=(\|\nabla_x\|_{L_\bfx^2}^2+ \||x|u\|_{L_\bfx^2}^2)^\h .\] 

\begin{thm}[Scale-invariance and small data scattering of the limit equation]\label{scatter}
\quad Let $\sigma=4$ and $V\equiv0$.
\begin{enumerate}
\item
 (\ref{lmt}) is left-invariant by the scaling
\begin{equation}\label{scale}
\phi(t,x,z) \longmapsto \phi_\mu(t,x,z):=\mu^{\f{1}{4}}\phi(\mu^2t,x,\mu z) \quad \mu>0.
\end{equation} 
This scaling leaves neither $ K[\phi_\mu(t)]$ nor $E[\phi_\mu(t)]$ invariant, but conserves
\[ K[\phi_\mu(t)]^3E[\phi_\mu(t)]. \]
Here, $K$ and $E$ are defined in \eqref{ke0} and \eqref{en}, respectively.
\item
There exists $\delta>0$ such that for any $\psi_0\in \Sigma_0^1 $ satisfying the scale invariant condition $\|\psi_0\|_{L_z^2\Sigma_x^1}^3\|\partial_z \psi_0\|_{L^2} \le \delta$, there exists a unique global solution to  (\ref{lmt}) $\phi  \in C(\R, \Sigma_0^1)$ with the data $\psi_0$. Moreover, there exist functions $\phi_{\pm}\in \Sigma_0^1$ such that 
\begin{equation}
 \lim_{t\to\pm\infty}\|\phi(t) - e^{\pm it \partial_z^2/2}\phi_{\pm}\|_{\Sigma_0^1}=0 .
\end{equation}
\end{enumerate}
\end{thm}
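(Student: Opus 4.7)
The scaling invariance is a direct computation. Since $\cH$ acts only on the $x$-variables, $e^{\pm i\theta\cH}$ commutes with the $z$-rescaling, so $e^{\pm i\theta\cH}\phi_\mu(t,x,z) = \mu^{1/4}(e^{\pm i\theta\cH}\phi)(\mu^2 t, x,\mu z)$. The nonlinearity being $9$-homogeneous, $\Fav(\phi_\mu)(t,x,z) = \mu^{9/4}\Fav(\phi)(\mu^2 t,x,\mu z)$; likewise $i\partial_t\phi_\mu$ and $-\tfrac12\partial_z^2\phi_\mu$ each carry the factor $\mu^{9/4}$, so $\phi_\mu$ solves (\ref{lmt}). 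A change of variable $z'=\mu z$ in the conserved quantities yields $A[\phi_\mu(t)]=\mu^{-1/2}A[\phi(\mu^2 t)]$ and, for $V\equiv 0$, $E[\phi_\mu(t)]=\mu^{3/2}E[\phi(\mu^2 t)]$; the product $A^3E$ is therefore scale-invariant, and by conservation of $A$ and $E$ along (\ref{lmt}) it is constant in $t$.

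\textbf{Part (ii): setup and key nonlinear estimate.} With $V\equiv 0$ one has $\cH_z=-\tfrac12\partial_z^2$, and in integral form
\[ \phi(t) = e^{-it\cH_z}\psi_0 -i\lmd\int_0^t e^{-i(t-s)\cH_z}\Fav(\phi(s))\,ds, \]
where $e^{-it\cH_z}$ is the free 1D Schr\"odinger propagator in $z$. The plan combines three ingredients: (a) 1D Strichartz estimates $\|e^{-it\cH_z}f\|_{L^q_tL^r_zL^2_x}\lesssim\|f\|_{L^2(\R^3)}$ for admissible $(q,r)$ such as $(8,4)$, transferred without loss since the propagator acts only in $z$; (b) the commutation of $e^{i\theta\cH}$ with $\partial_z$ and with the isotropic harmonic oscillator $H_0:=-\Delta_x+\tfrac14|x|^2$ (because $\cH$ commutes with rotations of $\R^2$), which gives $\|e^{i\theta\cH}u\|_{\Sigma^1_x}\sim\|H_0^{1/2}u\|_{L^2_x}\sim\|u\|_{\Sigma^1_x}$; and (c) the 2D Sobolev embedding $\Sigma^1_x\hookrightarrow L^p_x$ for every $p<\infty$. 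Pointwise in $(t,z)$, Minkowski in $\theta$ together with the $L^2_x$-unitarity of $e^{i\theta\cH}$ and (b)--(c) yield
\[ \|\Fav(\phi)(t,\cdot,z)\|_{L^2_x}\lesssim \|\phi(t,\cdot,z)\|_{L^{18}_x}^9\lesssim \|\phi(t,\cdot,z)\|_{\Sigma^1_x}^9, \]
and parallel estimates on the $\partial_z$-, $\nabla_x$-, and $|x|$-weighted components of $\Sigma^1_0$ follow from $|\partial_z(|u|^8u)|\lesssim|u|^8|\partial_z u|$, a harmonic-oscillator fractional Leibniz rule for $H_0^{1/2}$, and H\"older splits such as $L^{32}_x\times L^4_x$.

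\textbf{Contraction, scattering, and main obstacle.} I would then define a Strichartz space $X$ combining $L^\infty_t\Sigma^1_0$ with a norm of the form $L^8_tL^4_z\Sigma^1_x$ and its analogue on $\partial_z\phi$, and---using 1D Gagliardo-Nirenberg in $z$ to distribute the nine factors between the $L^\infty_t$ and Strichartz parts---show that the Duhamel map contracts on a ball of $X$ whose radius is majorized by the scale-invariant product $\|\psi_0\|_{L^2_z\Sigma^1_x}^3\|\partial_z\psi_0\|_{L^2}$. Scattering then follows from a Cauchy argument: the finiteness of the Strichartz norm of $\phi$ on all of $\R$ makes $\int_0^\infty e^{is\cH_z}\Fav(\phi(s))ds$ converge in $\Sigma^1_0$ and defines $\phi_+$, and similarly $\phi_-$. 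The delicate step is selecting Strichartz exponents and interpolation weights so that simultaneously (i) $\|e^{-it\cH_z}\psi_0\|_X$ is majorized by the scale-invariant product above, and (ii) the $9$-linear bound on $\Fav$ closes in $X$ with matching scaling. This is subtle because the critical quantity is a product of two distinct norms reflecting the $x/z$ anisotropy: the argument must trade $\Sigma^1_x$-regularity in $x$ for the $L^p_x$-factors ($p<\infty$) produced by the nonlinearity via 2D Sobolev, while the $z$-direction effectively carries the full criticality of a 1D $\dot H^{1/4}$-critical nonic NLS.
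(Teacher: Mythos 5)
Part (i) of your proposal is correct and coincides with the paper's argument: the identity $J_\mu^{-1}\Fav(J_\mu\phi)=\Fav(\phi)$, the exponents $A[\phi_\mu(t)]=\mu^{-1/2}A[\phi(\mu^2t)]$, $E[\phi_\mu(t)]=\mu^{3/2}E[\phi(\mu^2t)]$, and the invariance of $A^3E$ are exactly what is written there.

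For part (ii) there is a genuine gap: the entire technical content of the statement is the closure of a nine-linear estimate in anisotropic Strichartz spaces at the scale-invariant level, and your proposal explicitly defers this ("the delicate step is selecting Strichartz exponents and interpolation weights\dots") rather than carrying it out. Two of the ingredients you do name would not work as stated. First, the pointwise-in-$(t,z)$ bound $\|\Fav(\phi)(t,\cdot,z)\|_{L_x^2}\lesssim\|\phi(t,\cdot,z)\|_{\Sigma_x^1}^9$ cannot drive a critical argument: integrating it in $(t,z)$ either produces a positive power of $T$ (giving only a subcritical local result, hence no global solution and no scattering) or forces a norm such as $L_t^9L_z^{18}\Sigma_x^1$ which is not $1$-dimensional Strichartz-admissible. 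Second, your proposed H\"older split placing the derivative factor in $L_x^4$ (via a fractional Leibniz rule for $H_0^{1/2}$ and a pairing $L_x^{32}\times L_x^4$) does not close, because $\|\cH_0^{1/2}u\|_{L_x^4}$ is \emph{not} controlled by $\|u\|_{\Sigma_x^1}$; by the norm equivalence (\ref{eqH}) it requires $W^{1,4}_x$-type regularity. The paper circumvents exactly this point by applying Strichartz estimates in the averaging variable $\theta$ for the group $e^{i\theta\cH}$ — estimate (\ref{12}) places the nonlinearity in the dual norm $L_{\theta,x}^{4/3}$ so that the derivative factor lands in $L_{\theta,x}^4$ with only $\cH_0^{1/2}\phi\in L_x^2$ as input — whereas your proposal uses only unitarity of $e^{i\theta\cH}$ and Minkowski in $\theta$. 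The paper's route is: a conditional local theory (Theorem \ref{wplmt4}) in the spaces $L_t^4L_z^\infty\Sigma_x^1$, $\dot{X}_x$, $\dot{X}_z$, built on the interpolation $\|e^{-i\theta\cH}\phi\|_{L_t^{32/3}L_z^{16}L_\theta^{16}L_x^{16}}\lesssim\|\phi\|_{L_t^4L_z^\infty\Sigma_x^1}^{1/4}\|\phi\|_{\dot{X}_x}^{1/2}\|\phi\|_{\dot{X}_z}^{1/4}$ (Gagliardo--Nirenberg in $x$ plus Strichartz in both $\theta$ and $t$); then its $V\equiv0$ refinement (Theorem \ref{wplmt40}), where the global-in-time Strichartz estimate for $e^{it\partial_z^2/2}$ bounds the smallness functional by $C\|\psi_0\|_{L_z^2\Sigma_x^1}^{3/4}\|\partial_z\psi_0\|_{L^2}^{1/4}$; and finally the Cauchy criterion applied to $\int_t^{\infty}e^{i(t-s)\partial_z^2/2}\Fav(\phi(s))\,ds$ using the global bound on $\|e^{-i\theta\cH}\phi\|_{L_t^{32/3}L_{\theta,\bfx}^{16}}$. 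Your overall strategy (small-data contraction plus Cauchy-criterion scattering) is the right one, but without a working substitute for these estimates the proof is not complete.
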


Next, we present the convergence of (\ref{smc}). Because we obtain the global solution to (\ref{lmt}) under some conditions, in these cases  the following convergence result holds over any compact time interval.
\begin{thm}[Strong magnetic confinement limit]\label{main3} \qquad
\begin{enumerate}
\item
Let $\sigma\in \{1,2\}$ and $\phi \in  C([0,\Tmax), \Sigma^1)$ be a maximal solution to (\ref{lmt})  with a data $\psi_0\in \Sigma^1$. Then, for any $T\in (0,\Tmax)$ there exists a solution to (\ref{ep}) $\psiep \in  C([0, T], \Sigma^1)$ with the data $\psi_0$ for all sufficiently small $\ep>0$, which satisfies
\begin{equation}\label{main3-1}
\lim_{\ep \to +0}\| e^{it\cH/\ep^2}\psiep-\phi\|_{L^\infty ([0,T],\Sigma^1)}=0.
\end{equation}
Furthermore, there exist $ \ep_0=\ep_0(\|\psi_0\|_{\Sigma^1},T)>0$ and $C=C(\|\psi_0\|_{\Sigma^1},T)>0$ such that for any $\ep \in (0 , \ep_0]$,
\begin{equation}\label{main3-2}
\|e^{it\cH/\ep^2}\psiep-\phi\|_{L^\infty([0,T],L^2)}\le C\ep .
\end{equation}

\item
Let $\sigma\in \N$ and  $\phi \in  C([0,\Tmax), \Sigma^2)$  be a maximal solution to (\ref{lmt}) with a data $\psi_0\in \Sigma^2$. Then, for any $T \in (0,\Tmax)$ there exists a solution to (\ref{ep}) $\psiep \in  C([0, T], \Sigma^2)$ with the data $\psi_0$ for all sufficiently small $\ep>0$, which satisfies
\begin{equation}\label{main3-3}
\lim_{\ep \to +0}\|e^{it\cH/\ep^2}\psiep-\phi\|_{L^\infty([0,T], \Sigma^2)}=0 .
\end{equation}
\end{enumerate}
\end{thm}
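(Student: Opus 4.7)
The plan is to pass to the filtered unknown $\phi^\ep(t) := e^{it\cH/\ep^2}\psi^\ep(t)$, which solves (\ref{eplmt}) with datum $\psi_0$, and compare it directly with the limit $\phi$ whose existence on $[0,T]\subset[0,\Tmax)$ is guaranteed by Theorem~\ref{main2} (for part (1)) or by the FMS theorem (for part (2)). Local existence of $\psi^\ep$ in $\Sigma^1$ in part (1) is obtained by running the fixed point argument behind Theorem~\ref{main1} in $\Sigma^1$, using that $e^{-it\cH/\ep^2}$ is an isometry of $\Sigma^1$ and that $F$ is locally Lipschitz on $\Sigma^1$ for $\sigma=1,2$; the $\Sigma^2$ existence in part (2) is the content of the cited FMS statement. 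Uniform-in-$\ep$ control of $\|\psi^\ep\|_{\Sigma^1}$, respectively $\|\psi^\ep\|_{\Sigma^2}$, on $[0,T]$ will follow \emph{a posteriori} from the convergence estimate via a standard bootstrap, exploiting conservation of mass, of $A[\,\cdot\,]$, and of the $\ep$-magnetic energy.

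Setting $w^\ep := \phi^\ep-\phi$, one has $w^\ep(0)=0$ and
\begin{equation*}
w^\ep(t) = -i\lmd\int_0^t e^{-i(t-s)\cH_z}\Bigl\{\bigl[F(s/\ep^2,\phi^\ep)-F(s/\ep^2,\phi)\bigr]+\bigl[F(s/\ep^2,\phi)-\Fav(\phi)\bigr]\Bigr\}ds.
\end{equation*}
The first bracket is controlled by the local Lipschitz continuity of $u\mapsto F(\theta,u)$ on bounded sets of $\Sigma^1$ (or $\Sigma^2$), uniformly in $\theta$, and feeds a Gronwall term in $\|w^\ep\|$. For the second bracket, I introduce the primitive
\begin{equation*}
\Phi(\theta,u) := \int_0^{\theta}\bigl[F(s,u)-\Fav(u)\bigr]\,ds,
\end{equation*}
which is $2\pi$-periodic with zero mean by construction, hence uniformly bounded in $\theta$ with values in the same space as $F$. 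Using $\partial_\theta\Phi=F-\Fav$ and differentiating $\Phi(s/\ep^2,\phi(s))$ in $s$, the second bracket becomes $\ep^2\tfrac{d}{ds}\Phi(s/\ep^2,\phi(s))-\ep^2 D_u\Phi(s/\ep^2,\phi(s))\cdot\partial_s\phi(s)$. Integrating by parts in time then produces boundary contributions of order $\ep^2$ in the relevant norm, plus an interior term proportional to $-i\ep^2\int_0^t\cH_z e^{-i(t-s)\cH_z}\Phi(s/\ep^2,\phi)\,ds$ and a term $-\ep^2\int_0^t e^{-i(t-s)\cH_z}D_u\Phi(s/\ep^2,\phi)\cdot\partial_s\phi\,ds$, in which $\partial_s\phi$ is expressed via the equation (\ref{lmt}).

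The main obstacle is the $\cH_z$ factor in the interior term: measured in $L^2$, it costs one order of $z$-regularity that $\Sigma^1$-data do not supply. In part~(1) I would absorb this cost by estimating $\cH_z\Phi(s/\ep^2,\phi)$ in the dual norm $H_z^{-1}$ via duality against $H_z^1$ test functions and using the unitarity and Strichartz estimates for $\{e^{it\cH_z}\}$, which delivers the $O(\ep)$ rate in (\ref{main3-2}). The rate-less $\Sigma^1$-convergence (\ref{main3-1}), and the analogous $\Sigma^2$-convergence (\ref{main3-3}) in part~(2), follow from the same identity: the oscillating contribution still tends to $0$ as $\ep\to+0$ thanks to the uniform $\Sigma^1$ (resp.~$\Sigma^2$) bounds on $\phi^\ep$ and $\phi$, even though one cannot extract an explicit rate without an additional derivative. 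Gronwall then closes the estimate on $[0,T]$, and a standard continuation argument shows that $\psi^\ep$ exists on the whole of $[0,T]$ for all sufficiently small $\ep>0$, completing the proof.
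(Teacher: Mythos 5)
Your overall skeleton --- filtering, splitting the Duhamel term into a Lipschitz difference plus an oscillatory remainder, integrating the latter by parts in time via the zero-mean primitive of $F-\Fav$, then Gronwall --- is the same as the paper's. The gap is in how you handle the derivative loss created by the interior term $\ep^2\int_0^t e^{-i(t-s)\cH_z}\cH_z\,\Phi(s/\ep^2,\phi)\,ds$, which you correctly identify as the main obstacle but do not actually resolve. Your proposed fix --- measuring $\cH_z\Phi$ in $H_z^{-1}$ by duality --- does not close: the Gronwall inequality for $\|w^\ep\|_{L^2}$ needs the source term in $L^2$ (the propagator is unitary on $L^2$, not smoothing from $H^{-1}$ to $L^2$), so an $H^{-1}$ bound on the oscillatory piece cannot simply be inserted, and running the whole comparison at the $H^{-1}$ level instead would require Lipschitz estimates for the nonlinearity in negative norms that you have not supplied. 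Moreover $\cH_z=-\tfrac12\partial_z^2+V(z)$ with $V$ only subquadratic, so the interior term also carries a weight of size $\langle z\rangle^2$ that duality against $H_z^1$ does nothing for and that $\Sigma^1$ data do not control. Finally, for the rateless statements (\ref{main3-1}) and (\ref{main3-3}) you assert that the oscillatory contribution tends to $0$ ``thanks to the uniform $\Sigma^1$ bounds''; uniform boundedness alone gives no decay of an oscillatory integral in the strong topology, so this step is missing an argument entirely.

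The paper's resolution is a spectral regularization: replace $\phi_0$ by $\phi_{0,n}=P_{\le n}\phi_0$ (Hermite truncation for the three-dimensional harmonic oscillator), so that the comparison solution $\phi_n$ lies in every $\Sigma^k$. The oscillatory remainder is then bounded by $\ep^2\,\|\phi_{0,n}\|_{\Sigma^3}\lesssim \ep^2 n^2\|\phi_0\|_{\Sigma^1}$ (one needs $\Sigma^3$ for the $\Sigma^1$ statement, and correspondingly more for part (ii)), while the truncation error $\|\phi_0-\phi_{0,n}\|_{\Sigma^1}\to 0$ supplies the qualitative convergence; the $L^2$ rate $O(\ep)$ in (\ref{main3-2}) comes from optimizing $\|\phi_0-\phi_{0,n}\|_{L^2}\lesssim n^{-1}\|\phi_0\|_{\Sigma^1}$ against the $\ep^2 n$ cost of the oscillatory term, choosing $n\simeq \ep^{-1}$. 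This density-plus-optimization step is what has to replace the $H^{-1}$ duality in your write-up. A smaller issue: $A[\,\cdot\,]$ is conserved only for (\ref{lmt}), not for (\ref{ep}); the uniform $\Sigma^1$ bounds on $\psi^\ep$ come from the local fixed-point ball on a time step depending only on $\|\phi_0\|_{\Sigma^1}$, iterated, rather than from an a posteriori bootstrap off the convergence estimate.
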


On the right-hand side of (\ref{main3-2}), the convergence rate $\ep$ is due to the difference of the first-order regularity between the spaces $\Sigma^1$ and $L^2$. We do not know whether this rate is optimal.\\

\textbf{Organization of the paper.} Section \ref{Pre} introduces the notation and basic estimates used in this study. Section \ref{Wpep} presents Theorem \ref{main1}. Section \ref{Wplmt} presents Theorem \ref{main2} and Theorem \ref{scatter}. Section \ref{Sml} presents Theorem \ref{main3}.  Section \ref{Ap} introduces some properties of Harmonic oscillator required in this study.

\section{Preliminaries}\label{Pre}

\subsection{Notation}

We write $X\lesssim Y$ to express $X\le CY$ for some constant $C$.  $L^p(\R^d)$ denotes the usual Lebesgue spaces. We often  use the following notation
\[\|u\|_{L_\bfx^p} =\Big( \int_{\R^3} |u(\bfx)|^p d\bfx \Big)^\f{1}{p}\]
and  the partial spatial norm (recall $\bfx=(x,z)=(x_1,x_2,z)$)
\[ \|u\|_{L_x^p} =\Big( \int_{\R^2} |u(\bfx)|^p dx \Big)^\f{1}{p} \]
\[ \|u\|_{L_z^p} =\Big( \int_{\R} |u(\bfx)|^p dz \Big)^\f{1}{p} .\]
If $I\subset \R$  is an interval, the mixed Lebesgue norm on $I\times\R^3$ are defined by
\[ \|u\|_{L_t^qL^p_\bfx(I\times\R^3)}= \|u\|_{L^q(I,L^p(\R^3))}=\Big(  \int_{I} (\int_{\R^3} |u(t,\bfx)|^p d\bfx )^{\f{q}{p}}\Big)^{\f{1}{q}}. \]
Similarly, we denote
\[ \|u\|_{L_x^{p_1}L_z^{p_2}(\R^3)} = \Big(  \int_{\R^2} (\int_{\R} |u(\bfx)|^{p_2} dz )^{\f{p_1}{p_2}} dx \Big)^{\f{1}{p_1}} \]
\[ \|u\|_{L_z^{p_2}L_x^{p_1}(\R^3)} = \Big(  \int_{\R} (\int_{\R^2} |u(\bfx)|^{p_1} dx )^{\f{p_2}{p_1}} dz \Big)^{\f{1}{p_2}}. \]
When $X$, $Y$ are different norms, we denote
\[ \|u\|_{X\cap Y}:=\|u\|_X +\|u\|_Y.\]
If there is no confusion, we often omit the integral regions. \\

We use Hermite Sobolev space
\[ \Sigma^k=\Sigma_{\bfx}^k:=\{u\in H^k(\R^3):|\bfx|^k u\in L^2(\R^3)\}\]
\begin{equation*}
\begin{split}
 \|u\|_{\Sigma^k}&:= \Big( \sum_{j=1}^3 \| \partial_{x_j}^k u \|_{L^2(\R^3)}^2 +\| |\bfx|^ku \|_{L^2(\R^3)}^2 \Big)^\h  \qquad (x_3=z)
\end{split}
\end{equation*}
and denote partial spatial norm $  \|u\|_{\Sigma_x^1} $ by
\[ \|u\|_{\Sigma_x^k}= \Big( \int_{\R^2} |\partial_{x_1}^k u|^2 +  |\partial_{x_2}^k u|^2 + |x|^{2k}|u|^2 dx \Big)^\h . \]
We also define $\|u\|_{\Sigma_z^1}$ similarly. Note
\begin{equation*}
\begin{split}
\|u\|_{\Sigma^k}&\simeq_k \Big( \int_{\R^3}\sum_{|\al|\le k} | D_\bfx^\al u|^2 +  |\bfx|^{2k}|u|^2 d\bfx \Big)^\h \\
\end{split}
\end{equation*}
where 
\[ \mathbf{\al} =(\al_1, \al_2,\al_3) \in \Z_{\ge0}^3\quad  , \qquad D_\bfx^\al=\partial_{x_1}^{\al_1}\partial_{x_2}^{\al_2}\partial_{x_3}^{\al_3}=\partial_{x_1}^{\al_1}\partial_{x_2}^{\al_2}\partial_{z}^{\al_3} .\]
To obtain a solution to (\ref{ep}) and (\ref{lmt}),  we define the function space $\Sigma_{x,z}^{p_1,p_2,k}$ for $1\le p_1,p_2\le \infty$ and $k\in \N$ as follows,
\[ \Sigma_{x,z}^{p_1,p_2,k} := \{u\in L_x^{p_1}(\R^2,L_z^{p_2}(\R)) : \|u\|_{\Sigma_{x,z}^{p_1,p_2,k}} < \infty \} \]
\[ \| u \|_{\Sigma_{x,z}^{p_1,p_2,k}} : = \sum_{ |\al|\le k} \| D_\bfx^\al u\|_{L_x^{p_1}L_z^{p_2}(\R^3)} + \| \langle \bfx \rangle^k u \|_{L_x^{p_1} L_z^{p_2}(\R^3)}, \]
where $\langle \bfx \rangle =(1+|\bfx|^2)^{1/2}$. 
By switching the order of the norm, define $\Sigma_{z,x}^{p_2,p_1,k}$ in the same manner.  When $k=1$, we omit $k$.\\

We decompose $\cH$ as
\[ \cH= -\f{1}{2}\Delta_x+\f{1}{8}|x|^2-\f{i}{2}x^\perp\cdot\nabla_x := \cH_0+L\]
\[ \text{where \quad} \cH_0 := -\f{1}{2}\Delta_x+\f{1}{8}|x|^2  \quad \text{and}\quad  L:= -\f{i}{2}x^\perp\cdot\nabla_x.\]
Note that $\cH$ and $\cH_0$ are commutative. From ~\cite[Lemma 2.4]{Yajima},  for $1<p_1<\infty$ and $s \ge 0$, one has the norm equivalence
\begin{equation}\label{eqH}
\| \mathcal{F}^{-1} \langle \xi \rangle^s \mathcal{F} u\|_{L_x^{p_1} } +\| \langle x \rangle^s u \|_{L_x^{p_1}} \simeq \| \cH_0^{s/2} u \|_{L_x^{p_1}}, 
\end{equation}
where $\mathcal{F}$ is the Fourier transform in $x$.  In fact, this holds for all dimension.

\subsection{Basic estimates}

\begin{lemma}\label{sigmaz} For any $k\in \N$ and $f\in \Sigma^k$,
\[  \|e^{-it\cH_z} f  \|_{\Sigma_z^k}  \le e^{C_k|t|} \| f \|_{\Sigma_z^k}.  \]
\end{lemma}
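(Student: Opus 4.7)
The plan is to reduce to a one-dimensional estimate, set up a closed system of Sobolev-type quantities via commutators with $\cH_z$, and then conclude by Duhamel and Grönwall. Since $\cH_z$ acts only in $z$, for any fixed $x$ we may view $f(x,\cdot)$ as a Schwartz function on $\R_z$ (by density in $\Sigma_z^k(\R)$) and prove the inequality pointwise in $x$; squaring and integrating in $x$ then delivers the statement as written. The working definition of the norm is the one-dimensional analogue of (\ref{eqH}), namely
\[
\|g\|_{\Sigma_z^k} \simeq \sum_{j+l\le k}\|z^j\partial_z^l g\|_{L_z^2}.
\]

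Setting $u(t):=e^{-it\cH_z}g$ with $g$ Schwartz, from $i\partial_t u=\cH_z u$ I differentiate to obtain
\[
\partial_t(z^j\partial_z^l u) = -i\cH_z(z^j\partial_z^l u) - i[z^j\partial_z^l,\cH_z]u.
\]
The commutator expands, via $[z^j,\partial_z^2]=j(j-1)z^{j-2}+2jz^{j-1}\partial_z$ and $[\partial_z^l,V]=\sum_{m=1}^l\binom{l}{m}V^{(m)}(z)\partial_z^{l-m}$, into a finite sum of terms of the form $z^{j-2}\partial_z^l u$, $z^{j-1}\partial_z^{l+1}u$, and $z^j V^{(m)}(z)\partial_z^{l-m}u$ with $1\le m\le l$. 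The hypothesis (\ref{V}) yields $|V^{(m)}|\lesssim 1$ for $m\ge 2$ and $|V'(z)|\lesssim 1+|z|$, hence each such term is bounded in $L_z^2$ by a finite linear combination of quantities $\|z^{j'}\partial_z^{l'}u\|_{L_z^2}$ with $j'+l'\le j+l$. Applying Duhamel's formula together with the $L^2$-unitarity of $e^{-it\cH_z}$, and summing over all $(j,l)$ with $j+l\le k$, the function $\Phi_k(t):=\sum_{j+l\le k}\|z^j\partial_z^l u(t)\|_{L_z^2}$ satisfies
\[
\Phi_k(t)\le \Phi_k(0)+C_k\int_0^{|t|}\Phi_k(s)\,ds,
\]
and Grönwall's inequality closes the estimate.

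The main thing to verify — and the step I expect to be the crux — is that the commutator algebra closes at total order $k$. The delicate contribution is the $m=1$ piece $l\, z^j V'(z)\partial_z^{l-1}u$: its weight $z^j V'(z)$ may grow like $|z|^{j+1}$, producing a term controlled by $\|z^{j+1}\partial_z^{l-1}u\|_{L_z^2}$, which still has total order $j+l\le k$ and so is absorbed by $\Phi_k$. This balance relies essentially on $|V'|\lesssim 1+|z|$ from (\ref{V}); any faster growth of $V'$ would push the system outside $\Phi_k$ and defeat the Grönwall closure.
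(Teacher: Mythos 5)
Your proof is correct and follows essentially the same route as the paper, which simply defers to \cite[Lemma 3.2]{2}: commute $z^j\partial_z^l$ with $\cH_z$, use sub-quadraticity of $V$ (in particular $|V'(z)|\lesssim 1+|z|$ for the $m=1$ term, which you rightly flag as the crux) to close the system at total order $k$, and conclude by Duhamel plus Gr\"onwall. The only cosmetic remark is that the slice-wise reduction in $x$ is unnecessary — the identical commutator computation runs directly in $L^2(\R^3)$ since $\cH_z$ acts only in $z$ — and the $j=l=0$ term of $\Phi_k$ is in any case controlled by the top-order quantities via $\|g\|_{L_z^2}^2\le 2\|zg\|_{L_z^2}\|\partial_z g\|_{L_z^2}$, so the estimate closes with the seminorm as stated.
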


\begin{lemma}\label{sigmaz2} For any $k\in \N$ and $f\in \Sigma^{k+2}$,
\[  \| \cH_z f  \|_{\Sigma_z^k}  \lesssim_k  \| f \|_{\Sigma_z^{k+2}}.  \]
\end{lemma}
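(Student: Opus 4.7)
The plan is to decompose $\cH_z f=-\tfrac{1}{2}\partial_z^2 f+V(z)f$ and estimate the two ingredients of
\[
\|\cH_z f\|_{\Sigma_z^k}^2 = \int_{\R^3}|\partial_z^k\cH_z f|^2\,d\bfx + \int_{\R^3}|z|^{2k}|\cH_z f|^2\,d\bfx
\]
separately. The kinetic contribution $-\tfrac{1}{2}\partial_z^{k+2}f$ to the first integral is directly bounded by $\|f\|_{\Sigma_z^{k+2}}$ from the definition of the norm; every other contribution takes the mixed form $|z|^a\partial_z^b f$ with $a+b\le k+2$, so the whole lemma reduces to a family of weighted Sobolev estimates of exactly that shape.

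First I would apply Leibniz,
\[
\partial_z^k\bigl(V(z)f\bigr)=\sum_{j=0}^{k}\binom{k}{j}V^{(j)}(z)\,\partial_z^{k-j}f,
\]
and split into cases according to $j$. For $j\ge 2$, assumption (\ref{V}) gives $V^{(j)}\in L^\infty(\R)$, so these terms are dominated in $L^2(\R^3)$ by $\|\partial_z^{k-j}f\|_{L^2}\le\|f\|_{\Sigma_z^{k+2}}$. For $j=0$ and $j=1$, Taylor's theorem combined with $V''\in L^\infty$ yields the pointwise bounds $|V(z)|\lesssim\langle z\rangle^2$ and $|V'(z)|\lesssim\langle z\rangle$, reducing the contributions to $\|\langle z\rangle^2\partial_z^k f\|_{L^2}$ and $\|\langle z\rangle\,\partial_z^{k-1}f\|_{L^2}$ respectively. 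The weight side $|z|^k\cH_z f$ is handled in the same spirit: its potential piece satisfies $|z|^k|V(z)f|\lesssim\langle z\rangle^{k+2}|f|$ and is absorbed by $\|f\|_{\Sigma_z^{k+2}}$, leaving only the mixed term $|z|^k\partial_z^2 f$.

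Everything has then reduced to the weighted Sobolev family
\[
(\star)\qquad \bigl\||z|^a\partial_z^b f\bigr\|_{L^2(\R^3)}\;\lesssim_{a,b}\;\|f\|_{\Sigma_z^{a+b}(\R^3)},\qquad 0\le a+b\le k+2,
\]
which I would establish pointwise in $x$ and then integrate over $\R^2$ by Fubini. For the 1D version with $a,b\ge 1$, integration by parts on $\int_{\R} z^{2a}|\partial_z^b f|^2\,dz$ followed by Cauchy--Schwarz and Young's inequality gives the interpolation
\[
\|z^a\partial_z^b f\|_{L_z^2}^2\;\lesssim\; \|z^a\partial_z^{b+1}f\|_{L_z^2}\|z^a\partial_z^{b-1}f\|_{L_z^2}+\|z^{a-1}\partial_z^{b-1}f\|_{L_z^2}^2,
\]
and an induction on $\min(a,b)$ descends to the pure-derivative case $a=0$ and the pure-weight case $b=0$, both built into the $\Sigma_z^{a+b}$ norm. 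A cleaner alternative, and the one I would actually prefer to invoke, is the harmonic-oscillator characterization $\|f\|_{\Sigma_z^m}\simeq\|(-\partial_z^2+z^2)^{m/2}f\|_{L^2}$, i.e.\ the 1D analogue of (\ref{eqH}), from which $(\star)$ is immediate by spectral calculus; such facts fit naturally in the appendix Section \ref{Ap}.

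The main obstacle is precisely $(\star)$, which is what forces the two-derivative gap between the two sides of the lemma. The sub-quadratic growth of $V$ is exactly why two powers of the weight must be traded for two derivatives, and it is also why hypothesis (\ref{V}) only requires $L^\infty$ control starting from second-order derivatives of $V$.
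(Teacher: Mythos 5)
Your argument is correct and follows essentially the route the paper intends: the paper gives no proof of this lemma, deferring to \cite{2} (Lemma 3.2 there), which likewise splits $\cH_z$ into kinetic and potential parts, uses Leibniz together with the sub-quadraticity hypothesis (\ref{V}) to get $|V^{(j)}(z)|\lesssim\langle z\rangle^{2-j}$ for $j\le 2$, and reduces everything to the weighted Sobolev estimate $\||z|^a\partial_z^b f\|_{L^2}\lesssim\|f\|_{\Sigma_z^{a+b}}$ for $a+b\le k+2$. One small caution: the integration-by-parts recursion you wrote produces the higher-order factor $\|z^a\partial_z^{b+1}f\|_{L_z^2}$ and therefore does not close under an induction on $\min(a,b)$ alone, so your preferred alternative via the harmonic-oscillator norm equivalence (the one-dimensional case of (\ref{eqH}), made rigorous with the ladder operators $z\mp\partial_z$) is the clean way to finish.
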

\noindent
Proof of these lemmas are  the same  as  \cite[Lemma 3.2]{2}.
\begin{lemma}[\cite{StH} Strichartz's estimate for $\{e^{it\cH} \}_{t\in\R}$]
\quad\\
Let $(p_0,q_0)$ and $(p_1,q_1)$ be 2-dimensional admissible pairs,  that is for $j=0,1$
\[ 2\le p_j < \infty   \qquad  \text{and} \qquad \f{2}{q_j}+\f{2}{p_j}=1.\]
Then for any  $T>0$, 
\[ \Big\| e^{it\cH}f \Big\|_{L^{q_0}([0,T], L^{p_0})} \lesssim (1+T)^{1/q_0} \| f \|_{L^2} \]
\[ \Big\| \int_0^t e^{i(t-s)\cH} g(s, \cdot) ds \Big\|_{L^{q_0}([0,T], L^{p_0})}   \lesssim (1+T)^{1/q_0 +1/q_1} \| g \|_{L^{q_1'}([0,T],L^{p_1'})}.\]
\end{lemma}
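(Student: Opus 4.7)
The plan is to reduce both Strichartz estimates to the corresponding ones for the $2$D isotropic harmonic oscillator $\cH_0$ by commuting out the angular-momentum factor, then to establish short-time Strichartz via Mehler's formula, and finally to patch unit subintervals of $[0,T]$ together.

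First I would use the decomposition $\cH=\cH_0+L$ given in the text. A direct computation (or the observation that both operators are diagonalized by the Hermite basis adapted to polar coordinates) shows $[\cH_0,L]=0$, so $e^{it\cH}=e^{itL}e^{it\cH_0}$. Since $L=-\f{i}{2}x^\perp\cdot\nabla_x$ is, up to a factor, the generator of rotations of $\R^2$, the operator $e^{itL}$ acts by a rigid rotation and is an isometry on $L^{p}(\R^2)$ for every $p$. Consequently $\|e^{it\cH}f\|_{L^{p_0}_x}=\|e^{it\cH_0}f\|_{L^{p_0}_x}$, so it suffices to prove both bounds with $\cH$ replaced by $\cH_0$.

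Next I would establish Strichartz on a unit time interval. Since $\cH_0=-\h\Delta_x+\f{1}{8}|x|^2$ is an affine rescaling of the standard $2$D harmonic oscillator $-\Delta_y+|y|^2$, Mehler's formula yields an explicit integral kernel for $e^{it\cH_0}$ and hence the dispersive bound
\[ \|e^{it\cH_0}f\|_{L^\infty_x}\lesssim \f{1}{|\sin t|}\|f\|_{L^1_x},\qquad 0<|t|<\pi. \]
On $[0,1]$, $|\sin t|\gtrsim |t|$, so this recovers the classical Euclidean $2$D dispersive decay $\lesssim |t|^{-1}\|f\|_{L^1_x}$. Combined with the $L^2$ unitarity $\|e^{it\cH_0}f\|_{L^2_x}=\|f\|_{L^2_x}$, the Keel--Tao $TT^*$ argument furnishes the full homogeneous and retarded Strichartz package on $[0,1]$ with absolute constants for every admissible $(p_0,q_0)$ and $(p_1,q_1)$.

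Finally, to extend to $[0,T]$, partition $[0,T]$ into $N\le 1+T$ consecutive subintervals $I_j$ of length at most $1$. For the homogeneous bound, the semigroup property and $L^2$ unitarity give
\[ \|e^{it\cH_0}f\|_{L^{q_0}(I_j,L^{p_0}_x)}^{q_0}\lesssim \|e^{ij\cH_0}f\|_{L^2_x}^{q_0}=\|f\|_{L^2_x}^{q_0}, \]
and summing in $j$ yields the factor $(1+T)^{1/q_0}$. For the retarded bound, I would split the Duhamel integral as $\int_0^t=\sum_{k<j}\int_{I_k}+\int_j^t$ for $t\in I_j$, control each full subinterval contribution by dualizing to $\|h_k\|_{L^2}\lesssim \|g\|_{L^{q_1'}(I_k,L^{p_1'}_x)}$ and then applying unit-scale homogeneous Strichartz on $I_j$, and bound the partial term by the unit-scale retarded estimate; H\"{o}lder in $k<j$ at exponent $q_1$ followed by the outer sum in $j$ at exponent $q_0$ then produces the factor $(1+T)^{1/q_0+1/q_1}$. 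The principal technical input is the $|\sin t|^{-1}$ dispersive bound from Mehler's formula, together with careful tracking of the rescaling constants between $-\Delta_y+|y|^2$ and $\cH_0$; the commutation reduction and the telescoping over unit intervals are then essentially bookkeeping.
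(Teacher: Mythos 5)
Your proposal is correct and follows essentially the same route as the paper: the paper's entire argument is the reduction $e^{it\cH}=e^{itL}e^{it\cH_0}$ with $e^{itL}$ an $L^p$-isometry (a rotation), after which it cites the Strichartz estimates for $e^{it\cH_0}$. You additionally supply a correct proof of that cited input (Mehler dispersive bound $|\sin t|^{-1}$, Keel--Tao on unit intervals, and the standard patching argument producing the $(1+T)^{1/q_0}$ and $(1+T)^{1/q_0+1/q_1}$ losses), which the paper leaves to the reference.
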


\quad\\

Based on 
\[  e^{it\cH} = e^{it\cH_0} e^{itL}= e^{itL} e^{it\cH_0} \]
(see Section \ref{Ap}) and  $e^{itL}$ being the rotation of angle $t$ around the origin (cf.~\cite{CR}), proof of this lemma comes down to  Strichartz's estimate for $\{ e^{it\cH_0} \}_{t\in\R}$.

In this study, by changing the variable of time, we use Strichartz's estimate in the following form, 
\begin{equation}\label{Step1}
\Big\| e^{it\cH/\ep^2} f \Big\|_{L^{q_0}([0,T], L^{p_0})} \lesssim (\ep^2 +T)^{1/q_0} \| f \|_{L^2} 
\end{equation}
\begin{equation}\label{Step2}
\Big\| \int_0^t e^{i(t-s)\cH/\ep^2 } g(s, \cdot) ds \Big\|_{L^{q_0}([0,T], L^{p_0})}   \lesssim (\ep^2 +T)^{1/q_0+1/q_1}\| g \|_{L^{q_1'}([0,T], L^{p_1'})} .
\end{equation}
\quad\\

\noindent
We also use Strichartz's estimate for $\{e^{it\cH_z}\}_{t\in\R}$.
\begin{prop}[\cite{Fu79,Fu80}Dispersive estimate for  $\{e^{it\cH_z}\}_{t\in\R}$]\quad\\
Let $V$ be smooth and subquadratic. Then, there exists $\delta>0$ such that  for all $t \in (-\delta, \delta)$ one has
\[ \| e^{it\cH_z}u \|_{L_z^\infty}\lesssim |t|^{-\h}\|u\|_{L_z^1}. \]
\end{prop}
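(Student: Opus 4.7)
The plan is to establish the dispersive estimate by constructing the Schwartz kernel $K_t(z,z')$ of $e^{it\cH_z}$ for small $|t|$ and deriving a pointwise bound $|K_t(z,z')| \lesssim |t|^{-1/2}$. Once this kernel bound is in hand, the stated estimate follows immediately: writing $e^{it\cH_z}u(z) = \int_\R K_t(z,z') u(z')\,dz'$, one has
\[ \|e^{it\cH_z}u\|_{L_z^\infty} \le \|K_t\|_{L^\infty_{z,z'}}\,\|u\|_{L_z^1} \lesssim |t|^{-1/2}\|u\|_{L_z^1}. \]
So the whole task is to produce the kernel and bound it.

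For the free case $V\equiv 0$, this is explicit: the kernel is $(2\pi i t)^{-1/2}e^{i(z-z')^2/(2t)}$, whose modulus equals $(2\pi|t|)^{-1/2}$. For general smooth subquadratic $V$, I would use a WKB/parametrix construction (equivalently, Fujiwara's rigorous Feynman path-integral construction). Concretely, for $|t|$ sufficiently small, the Hamilton--Jacobi equation
\[ \partial_t S + \tfrac{1}{2}(\partial_z S)^2 + V(z)=0, \qquad S(0,z,z') = \tfrac{(z-z')^2}{2t}\text{-type data} \]
admits a smooth solution on short time intervals because the classical Hamiltonian flow generated by $\tfrac12 p^2 + V(z)$ has uniformly bounded derivatives when $V''\in L^\infty$, so the map $z' \mapsto z(t;z_0',p_0')$ stays a global diffeomorphism of $\R$ for $|t|\le\delta$ with non-degenerate Jacobian. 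Writing the ansatz $K_t(z,z') = (2\pi i t)^{-1/2} a(t,z,z') e^{iS(t,z,z')}$, the amplitude $a$ solves a transport equation along classical trajectories with source terms controlled by $V^{(\alpha)}$, $\alpha\ge 2$; the subquadratic hypothesis (\ref{V}) yields $\|a\|_{L^\infty}\lesssim 1$ uniformly in $t\in(-\delta,\delta)$. Combining the $(2\pi|t|)^{-1/2}$ prefactor with the bounded amplitude gives $|K_t(z,z')|\lesssim |t|^{-1/2}$ as required.

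The hard part is the uniform control of the amplitude and the non-degeneracy of the Hessian of the action $S$: the subquadratic assumption is exactly what is needed to keep the classical flow from blowing up and to keep $\det(\partial^2 S /\partial z\partial z')$ bounded away from zero for $|t|\le\delta$. This is precisely the content of Fujiwara's construction in \cite{Fu79,Fu80}, and I would quote it rather than redo the stationary-phase and transport-equation bookkeeping, since redoing it would add nothing new to the paper.
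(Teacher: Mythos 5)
Your proposal is correct and matches the paper's treatment: the paper does not prove this proposition but cites it directly from Fujiwara's construction of the fundamental solution for subquadratic potentials (\cite{Fu79,Fu80}), which is exactly the parametrix/kernel bound you describe and ultimately defer to. Your reduction from the kernel bound $|K_t(z,z')|\lesssim |t|^{-1/2}$ to the $L^1\to L^\infty$ estimate is the standard and correct step, and your sketch of why subquadraticity keeps the classical flow and the Hessian of the action under control on a short time interval accurately summarizes the content of the cited results.
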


\noindent
From this proposition, we have the following estimates.
\begin{lemma}[Strichartz's estimate for $\{e^{it\cH_z} \}_{t\in\R}$] \quad\\
Let $(p_0,q_0)$ and $(p_1,q_1)$ be 1-dimensional admissible pairs,  that is for $j=0,1$
\[ 2\le p_j \le \infty   \qquad  \text{and} \qquad \f{2}{q_j}+\f{1}{p_j}=\h.\]
Then, for any  $T>0$, 
\[ \Big\| e^{it\cH_z}f \Big\|_{L^{q_0}([0,T], L^{p_0})} \lesssim \Big(1+\f{T}{\delta}\Big)^{1/q_0} \| f \|_{L^2} \]
\[ \Big\| \int_0^t e^{i(t-s)\cH_z} g(s, \cdot) ds \Big\|_{L^{q_0}([0,T], L^{p_0})}   \lesssim \Big(1+\f{T}{\delta} \Big)^{1/q_0 +1/q_1} \| g \|_{L^{q_1'}([0,T],L^{p_1'})},\]
where $\delta$ is the constant appearing in the above proposition.
\end{lemma}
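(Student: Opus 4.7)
The plan is to invoke the Keel--Tao abstract Strichartz theorem on short time intervals of length at most $\delta$, and then patch the resulting short-time estimates across $[0,T]$. The two hypotheses required by Keel--Tao are (i) the dispersive bound $\|e^{it\cH_z}u\|_{L^\infty}\lesssim |t|^{-1/2}\|u\|_{L^1}$ for $|t|<\delta$, which is exactly the preceding proposition, and (ii) the energy bound $\|e^{it\cH_z}\|_{L^2\to L^2}=1$, which follows from essential self-adjointness of $\cH_z=-\f{1}{2}\partial_z^2+V(z)$ on $L^2(\R)$ for smooth subquadratic $V$. Time-translation (absorbing a shift by $e^{ia\cH_z}$ via unitarity on $L^2$) then gives the standard short-time Strichartz estimates uniformly over all intervals $I$ with $|I|\le\delta$ and any admissible pair.

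Next, I would patch. Split $[0,T]$ into $N=\lceil T/\delta\rceil \le 1+T/\delta$ consecutive subintervals $I_j=[a_j,a_{j+1}]$ of length at most $\delta$. For the homogeneous estimate, write $e^{it\cH_z}f = e^{i(t-a_j)\cH_z}(e^{ia_j\cH_z}f)$ for $t\in I_j$, apply short-time Strichartz to the translated initial data (whose $L^2$ norm equals $\|f\|_{L^2}$ by unitarity), and sum the $q_0$-th powers over $j$:
\[
\|e^{it\cH_z}f\|_{L^{q_0}([0,T],L^{p_0})}^{q_0} = \sum_{j=0}^{N-1}\|e^{it\cH_z}f\|_{L^{q_0}(I_j,L^{p_0})}^{q_0} \lesssim N\|f\|_{L^2}^{q_0},
\]
so taking the $q_0$-th root yields the announced factor $(1+T/\delta)^{1/q_0}$; the case $q_0=\infty$ is immediate.

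For the inhomogeneous estimate I would combine duality with the patched homogeneous bound. Dualizing the patched homogeneous estimate (with $(q_1,p_1)$ in place of $(q_0,p_0)$) gives
\[
\Bigl\|\int_0^T e^{-is\cH_z}g(s)\,ds\Bigr\|_{L^2} \lesssim (1+T/\delta)^{1/q_1}\|g\|_{L^{q_1'}([0,T],L^{p_1'})},
\]
and composing with the homogeneous estimate controls the non-retarded operator $e^{it\cH_z}\int_0^T e^{-is\cH_z}g(s)\,ds$ in $L^{q_0}([0,T],L^{p_0})$ by $(1+T/\delta)^{1/q_0+1/q_1}\|g\|_{L^{q_1'}L^{p_1'}}$. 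The Christ--Kiselev lemma then transfers this bound to the retarded operator $\int_0^t$ appearing in the statement, which I expect to be the main technical step. It requires $q_0>q_1'$, a condition automatically satisfied here since 1D admissibility forces $q_0,q_1\ge 4$ and hence $q_1'\le 4/3<q_0$; in particular no sharp endpoint of Keel--Tao type arises, so no diagonal argument is needed. The only other subtle point is the uniformity of the short-time constant over translates of intervals of length $\le\delta$, which is immediate from the unitarity of $e^{ia\cH_z}$.
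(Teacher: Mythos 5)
Your proof is correct and is exactly the standard argument the paper implicitly relies on: the paper states this lemma without proof, deriving it from the local-in-time Fujiwara dispersive estimate, and your route (Keel--Tao on subintervals of length at most $\delta$ using unitarity of $e^{it\cH_z}$ on $L^2$, patching $N\le 1+T/\delta$ pieces to get the factor $N^{1/q_0}$, then duality plus Christ--Kiselev for the retarded term, valid since $q_1'\le 4/3<4\le q_0$) fills in those details correctly. The only remark worth adding is that Christ--Kiselev is avoidable: Keel--Tao already yields the retarded inhomogeneous estimate on each short interval, and one can patch those directly by controlling $\|u(a_j)\|_{L^2}$ via Duhamel on the preceding intervals, which gives the same power $(1+T/\delta)^{1/q_0+1/q_1}$.
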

\quad\\
\noindent
In this paper, the dependence of constants on $\delta$ is omitted.

\subsection{Conservation}

Next, we derive two conservation laws. 

\begin{lemma}
	Solution to (\ref{ep}) conserves angular momentum in $x$ 
\begin{equation}\label{ang}
 \langle L\psi,\psi \rangle_{L_\bfx^2}=\int_{\R^3} (L\psi)\overline{\psi} d\bfx = \int_{\R^3}  -\f{i}{2}(x^\perp\cdot\nabla_x \psi) \overline{\psi} d\bfx .
\end{equation}

\end{lemma}

\begin{proof}
	Let $T>0$ and $\psi(t) \in C([0,T], \Sigma^2)$ be a solution to (\ref{ep}) for some $\ep$. Then, 
	\begin{equation}
		\begin{split}
			\f{d}{dt} \langle L \psi(t), \psi(t) \rangle 
			&= 2 \lmd\text{Re} \langle L \psi(t), (-i)|\psi(t)|^{2\sigma}\psi(t) \rangle \\
			&=2\lmd \text{Re} \int_{\R^3} iL\psi(t) |\psi(t)|^{2\sigma} \overline{\psi(t)}  d\bfx,
		\end{split}
	\end{equation}
where we use the fact $\cH$ is commutative with $\cH_0$ and therefore also with $L$.
	Let $\tilde{L}:=iL=(-x_2\partial_{x_1}+x_1\partial_{x_2})/2$. Then, we have for any function $\psi \in \Sigma^2$ that
	\begin{equation}\label{Im}
	\begin{split}
		&2\text{Re}\int_{\R^3} (iL\psi)|\psi|^{2\sigma}\overline{\psi}
		=\int_{\R^3} (\tilde{L}|\psi|^2)|\psi|^{2\sigma}
		=\f{1}{\sigma+1}\int_{\R^3}\tilde{L}(|\psi|^{2\sigma+2}
)		=0.
	\end{split}
	\end{equation}

\end{proof}

\begin{remark}
	By Theorem \ref{main1}, when $\sigma=1,2$ we have the well-posedness of \eqref{ep} in $\Sigma^1$. Because $\Sigma^2$ is dense in $\Sigma^1$, we can extend the conservation law of \eqref{ang} to the solutions that belong to $C([0,T],\Sigma^1)$.

	Fix $\ep$ and let $\psi \in C([0,T], \Sigma^1)$ be the solution to \eqref{ep}, where $T$ is some positive constant. By the density, for any small $\delta>0$, there exists $\psi_{0, \delta} \in \Sigma^2$ such that 
		\begin{equation*}
		\|\psi(0)-\psi_{0,\delta}\|_{\Sigma^1}\le \delta.
	\end{equation*}
	From the local well posedness of \eqref{ep} in $\Sigma^1$ and persistence of regularity, there exists $\delta_0>0$ depending on $T$ such that,  for any $\delta\in (0,\delta_0)$ there exists the solution to \eqref{ep} $\psi_\delta \in C([0,T], \Sigma^2)$ with $\psi_\delta(0)=\psi_{0,\delta}$ which satisfies
	\begin{equation*}
		\|\psi-\psi_\delta\|_{L^\infty([0,T],\Sigma^1)} \le C(\psi(0), T)\|\psi(0)-\psi_{0,\delta}\|_{\Sigma^1}.
	\end{equation*}
	Because $\langle L\psi_\delta(t),\psi_\delta(t) \rangle_{L_\bfx^2}=\langle L\psi_{0,\delta},\psi_{0,\delta} \rangle_{L_\bfx^2}$ holds for any $t\in [0,T]$, by the triangle inequality and  H\"{o}lder's inequality,  we have
	\begin{equation*}
	\begin{split}
		&|\langle L\psi(t),\psi(t) \rangle_{L_\bfx^2}-\langle L\psi(0),\psi(0) \rangle_{L_\bfx^2}| \\
		\le& |\langle L\psi(t),\psi(t) \rangle_{L_\bfx^2}-\langle L\psi_\delta(t),\psi_\delta(t) \rangle_{L_\bfx^2}|+|\langle L\psi_{0,\delta},\psi_{0,\delta} \rangle_{L_\bfx^2}-\langle L\psi(0),\psi(0) \rangle_{L_\bfx^2}|\\
		\le& C(\psi(0) ,T)  \delta.
	\end{split}
	\end{equation*}
	Letting $\delta\to0$, we have  $\langle L\psi(t),\psi(t) \rangle_{L_\bfx^2}=\langle L\psi(0),\psi(0) \rangle_{L_\bfx^2}$ for any $t\in [0,T]$. Iterating this argument,   $\langle L\psi(t),\psi(t) \rangle_{L_\bfx^2}$ conserves as long as $\psi(t)$ exists. 
\end{remark}

\quad

Then, we define a conservative quantity 
\begin{equation}\label{eep0}
\begin{split}
E_0^\ep[\psi]&:= \h E^\ep[\psi]-\f{1}{\ep^2}\int_{\R^3} (L\psi)\overline{\psi} d\bfx \\
&=\f{1}{\ep^2}\Big(\h\| \nabla_x \psi\|_{L^2}^2+\f{1}{8} \| |x|\psi \|_{L^2}^2\Big)+ \f{1}{2}\| \partial_z \psi\|_{L^2}^2 + \int_{\R^3} V(z) |\psi|^2 d\bfx  \\
 & \qquad + \f{\lmd}{\sigma+1}\| \psi \|_{L^{2\sigma+2}}^{2\sigma+2}.
\end{split}
\end{equation}
$E^\ep[\psi]$ cannot control $\|\psi\|_{L_z^2\Sigma_x^1}$(See Section \ref{Ap}). However, if $\lmd=+1$ and $\sigma=1,2$, 
\[ \|\psi\|_{\Sigma^1}^2 \lesssim E_0^\ep[\psi] +c\|\langle z \rangle \psi\|_{L^2}^2\]
 holds (See Section \ref{Wpep}). Therefore, in this study we use $E_0^\ep$  as ``energy''. \\

On the other hand, (\ref{lmt}) consists of, in addition to mass $M$ and Hamiltonian $E$, the following conservation law. 
\begin{lemma}
Solution to (\ref{lmt})  conserves 
\begin{equation}\label{ke}
K[\phi]:=\langle \cH_0\phi, \phi\rangle = \h \| \nabla_x \phi \|_{L^2}^2+\f{1}{8}\| x \phi\|_{L^2}^2 .
\end{equation}
\end{lemma}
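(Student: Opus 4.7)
The plan is to differentiate $A[\phi(t)]=\langle\cH_0\phi,\phi\rangle_{L^2(\R^3)}$ along the flow, show the derivative vanishes for smooth solutions, and extend to the classes from Theorem~\ref{main2} by a density/continuous-dependence argument. Using (\ref{lmt}) and self-adjointness of $\cH_0$,
\[
\tfrac{d}{dt}A[\phi]=2\,\mathrm{Im}\,\langle\cH_z\phi+\lmd\Fav(\phi),\cH_0\phi\rangle_{L^2(\R^3)}.
\]
Since $\cH_z$ acts only in $z$ and $\cH_0$ only in $x$, the two commute as self-adjoint operators, so $\langle\cH_z\phi,\cH_0\phi\rangle\in\R$ and this first contribution drops out.

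The nonlinear term is the substantive point. A direct check shows $[\cH_0,L]=0$ because the 2D harmonic oscillator $\cH_0$ is rotation invariant, hence $[\cH_0,\cH]=0$ and $e^{\pm i\theta\cH}$ commutes with $\cH_0$. Setting $u_\theta:=e^{-i\theta\cH}\phi$ and moving $e^{-i\theta\cH}$ through $\cH_0$,
\[
\langle\Fav(\phi),\cH_0\phi\rangle=\tfrac{1}{2\pi}\int_0^{2\pi}\langle|u_\theta|^{2\sigma}u_\theta,\cH_0 u_\theta\rangle\,d\theta.
\]
I would then split $\cH_0=\cH-L$. The $L$-piece vanishes pointwise in $\theta$: integration by parts combined with $\nabla_x\cdot x^\perp=0$ yields $\mathrm{Im}\int_{\R^3}|u|^{2\sigma}u\,\overline{Lu}\,d\bfx=0$, which is a restatement of the rotation invariance of $|u|^{2\sigma+2}$.

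For the $\cH$-piece, the key is that $u_\theta$ evolves freely, $i\partial_\theta u_\theta=\cH u_\theta$. Differentiating $\|u_\theta\|_{L^{2\sigma+2}}^{2\sigma+2}$ in $\theta$ and taking the real part gives
\[
\mathrm{Im}\!\int_{\R^3}|u_\theta|^{2\sigma}u_\theta\,\overline{\cH u_\theta}\,d\bfx=-\tfrac{1}{2\sigma+2}\tfrac{d}{d\theta}\|u_\theta\|_{L^{2\sigma+2}(\R^3)}^{2\sigma+2}.
\]
Since (\ref{specn}) implies $\theta\mapsto e^{-i\theta(\cH-\h)}$ is $2\pi$-periodic, so is $|u_\theta|$; integrating the displayed identity in $\theta$ over $[0,2\pi]$ produces a telescoping boundary term that vanishes. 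Combining both pieces, $\tfrac{d}{dt}A[\phi]=0$.

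The formal differentiation is classical for $\Sigma^2$ solutions via Theorem~1.1 of~\cite{2}; to propagate the conservation to the $\Sigma^1$ solutions of Theorem~\ref{main2}, I would approximate the initial data by $\Sigma^2$ data, apply the conservation to each approximate solution, and pass to the limit using continuous dependence in $\Sigma^1$. The main subtlety is that $\mathrm{Im}\langle|u|^{2\sigma}u,\cH_0 u\rangle$ does \emph{not} vanish for generic $u$; the cancellation is a genuinely averaged phenomenon, depending on the $2\pi$-periodicity built into $\Fav$ through $\mathrm{spec}(\cH)\subset\h+\N_0$, and this is the place where care is needed.
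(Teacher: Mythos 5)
Your proof is correct, and it follows the paper's setup up to the decisive step: differentiate $A[\phi]$, discard the $\cH_z$ contribution, commute $e^{-i\theta\cH}$ past $\cH_0$, split $\cH_0=\cH-L$, and kill the $L$-piece via $\mathrm{Im}\int|u|^{2\sigma}u\,\overline{Lu}\,d\bfx=0$ (rotation invariance) — all of this is exactly what the paper does. Where you diverge is the remaining $\cH$-piece. The paper expands $\phi$ in the Hermite eigenbasis of $\cH$, uses the $\theta$-average to enforce the resonance condition $n_1+n_2=n_3+n_4$, and symmetrizes the resulting sum so that the factor $(n_1+n_2-n_3-n_4)$ annihilates it; it only writes this out for $\sigma=1$ and asserts the other cases are similar. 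You instead observe that $u_\theta=e^{-i\theta\cH}\phi$ solves $i\partial_\theta u_\theta=\cH u_\theta$, so $\mathrm{Im}\int|u_\theta|^{2\sigma}u_\theta\overline{\cH u_\theta}\,d\bfx$ is $-\f{1}{2\sigma+2}\partial_\theta\|u_\theta\|_{L^{2\sigma+2}}^{2\sigma+2}$, a total derivative whose integral over $[0,2\pi]$ vanishes by the $2\pi$-periodicity of $|u_\theta|$ coming from $\mathrm{spec}\,\cH\subset\h+\N_0$. This is the physical-space counterpart of the paper's resonance cancellation; it is cleaner, handles all $\sigma\in\N$ uniformly with no combinatorics, and makes transparent that the mechanism is the periodicity of the free $\cH$-flow (your closing remark that the cancellation is genuinely an averaged phenomenon, false pointwise in $\theta$, is exactly right). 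Your regularization strategy (prove it for regular solutions, pass to the limit by continuous dependence) matches the paper's implicit treatment, which also only computes for Schwartz solutions; the one caveat worth flagging is that for $\sigma=1$ the well-posedness class of Theorem \ref{main2} is $L_x^2\Sigma_z^1$, where $A[\phi]$ need not be finite, so the conservation law should be understood for data in $\Sigma^1$ (or better), which is how it is used elsewhere in the paper.
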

\noindent
Proof of this lemma is in the same spirit as \cite[Section 3]{MS}. 
\begin{proof}
We prove the case $\sigma=1$. Other cases can be proved in the same manner. Recall that $\cH_0$ is commutative with $\cH$ and therefore also with $e^{i\theta\cH}$. Then, let $\phi \in C([0,T], \Sigma^2)$ be the solution to (\ref{lmt}), we have for any $t \in[0,T]$ that
\begin{equation*}
\begin{split}
\f{d}{dt} \langle \cH_0 \phi(t), \phi(t) \rangle &= -2 \lmd\text{Im} \langle \cH_0\phi(t), \Fav{\phi(t)} \rangle \\
&=2\lmd \text{Im} \f{1}{2\pi} \int_0^{2\pi} \int_{\R^3} |e^{-i\theta\cH}\phi(t)|^{2} e^{-i\theta\cH}\phi(t) \overline{\cH_0 e^{-i\theta\cH}\phi(t)} d\bfx d\theta \\
&=2\lmd \text{Im} \f{1}{2\pi} \int_0^{2\pi} \int_{\R^3} |e^{-i\theta\cH}\phi(t)|^{2} e^{-i\theta\cH}\phi(t) \overline{\cH e^{-i\theta\cH}\phi(t)} d\bfx d\theta.
\end{split}
\end{equation*}
In the last line, we used the fact for any function $u \in \Sigma^2$,
\[  \text{Im}  \int_{\R^2} |u|^{2} u \overline{Lu} dx =0 .\]
On the other hand, in Section \ref{Ap}, we denote the Hermite expansion for $\cH$ of $\phi(t)$ as
\[ \phi(t,\bfx)= \sum_{n=0}^\infty c_n(t) \phi_n(t, \bfx). \]
Then,
\begin{equation*}
\begin{split}
&\text{Im} \f{1}{2\pi} \int_0^{2\pi} \int_{\R^3}  |e^{-i\theta\cH}\phi(t)|^{2} e^{-i\theta\cH}\phi(t) \overline{\cH e^{-i\theta\cH}\phi(t)} d\bfx d\theta \\
 &= \text{Im}\sum_{n_1+n_2 =  n_3+n_4}c_{n_1}c_{n_2}  \overline{ c_{n_3}c_{n_4} }(n_4+\h)   \phi_{n_1} \phi_{n_2} \overline{\phi_{n_3} \phi_{n_4}}\\
 &= -\f{1}{4} \text{Im}\sum_{n_1+n_2= n_3+n_4}
  (n_1+n_2-n_3-n_4) c_{n_1}c_{n_2}  \overline{ c_{n_3}c_{n_4} } \phi_{n_1} \phi_{n_2}  \overline{  \phi_{n_3}\phi_{n_4} }=0.
\end{split}
\end{equation*}
\end{proof}

\begin{remark}
	By Theorem \ref{main2}, when $\sigma=1,2,3,4$ we have the well-posedness of \eqref{lmt} in $\Sigma^1$. Because $\Sigma^2$ is dense in $\Sigma^1$, we can extend the conservation law of $K$ to the solutions that belong to $C([0,T],\Sigma^1)$. 
\end{remark}

\section{Well-posedness of (\ref{ep})}\label{Wpep}

\subsection{ The case $\sigma=1$}

We now present the precise statement.

\begin{thm}\label{wpep1}
Let $\sigma=1$ and $V$ satisfy \eqref{asV}. For any data $\psi_0 \in \Sigma^1$ and any $\ep>0$, there exist $T>0$ and a unique solution to (\ref{ep})  $\psi^\ep \in C([0, T], \Sigma^1)\cap L^4([0, T],\Sigma_{x,z}^{4,2})$, depending continuously on $\psi_0$. Suppose $\Tmax^\ep \in (0,+\infty]$ is the maximal time of existence. 
\begin{itemize}
\item
If $\lmd =+1$, $\Tmax^\ep=+\infty$.
\item
If $\lmd=-1$ and $V$ is bounded below, there exists $\ep_*=\ep_*(\|\psi_0\|_{\Sigma^1})>0$ such that for all $\ep \in (0,\ep_*]$, $\Tmax^\ep=+\infty$.  
\item
In the other cases, $\Tmax^\ep \to +\infty$ as $\ep\to+0$.
\end{itemize}
\end{thm}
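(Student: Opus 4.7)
The plan is twofold: first, establish local well-posedness in $\Sigma^1$ via a Strichartz-based contraction; then upgrade to the global (respectively almost-global) statement using conservation of the angular-momentum-shifted energy $E_0^\ep$, splitting into the three cases according to the sign of $\lmd$ and of $V$. For local well-posedness I would write Duhamel's formula
\[ \psi^\ep(t) = e^{-it(\cH/\ep^2+\cH_z)}\psi_0 - i\lmd\int_0^t e^{-i(t-s)(\cH/\ep^2+\cH_z)}|\psi^\ep(s)|^{2}\psi^\ep(s)\,ds \]
(using $[\cH,\cH_z]=0$ since they act on disjoint variables) and run a contraction in $X_T := C([0,T],\Sigma^1) \cap L^4([0,T],\Sigma_{x,z}^{4,2})$. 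The linear piece is controlled by the $\ep$-rescaled 2D Strichartz estimates (\ref{Step1})--(\ref{Step2}) for the admissible pair $(q,p)=(4,4)$ together with Lemma \ref{sigmaz} for $e^{-it\cH_z}$ on $\Sigma_z^1$; derivatives and weights in $x$ pass through $e^{it\cH/\ep^2}$ using the norm equivalence (\ref{eqH}) and $[\cH_0,\cH]=0$, while $\partial_z$ and the weight $z$ commute with $\cH$. For the cubic nonlinearity, a pointwise Hölder estimate in $z$ combined with the 1D Sobolev embedding $\Sigma_z^1\hookrightarrow L_z^\infty$ yields
\[ \bigl\||u|^{2}u\bigr\|_{L^{4/3}([0,T],\Sigma_{x,z}^{4/3,2})} \lesssim \|u\|_{L^{4}([0,T],\Sigma_{x,z}^{4,2})}^{3}, \]
so the Strichartz factor $(T+\ep^2)^{1/2}$ closes the contraction for $T$ small, and uniqueness, continuity in data, and the blow-up alternative $\|\psi^\ep(t)\|_{\Sigma^1}\to\infty$ as $t\to\Tmax^\ep$ follow in the standard way.

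To globalize I combine mass conservation with conservation of $E_0^\ep$; the latter follows from energy conservation and $\tfrac{d}{dt}\langle L\psi^\ep,\psi^\ep\rangle=0$, which uses $[L,\cH]=[L,\cH_z]=0$ and $\mathrm{Im}\int|u|^2 u\,\overline{Lu}\,dx=0$. The resulting identity
\[ \tfrac{1}{2\ep^2}\|\nabla_x\psi^\ep\|^2 + \tfrac{1}{8\ep^2}\||x|\psi^\ep\|^2 + \tfrac{1}{2}\|\partial_z\psi^\ep\|^2 + \int V|\psi^\ep|^2 + \tfrac{\lmd}{2}\|\psi^\ep\|_{L^4}^{4} = E_0^\ep[\psi_0] \]
is the main tool. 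In the defocusing case $\lmd=+1$ the $L^4$-term is coercive, and the only indefinite piece $\int V|\psi^\ep|^2$ is bounded by $C(M+\|z\psi^\ep\|^2)$ via sub-quadraticity of $V$. To control $\|z\psi^\ep\|$, I apply $z$ to the equation: using $[\cH,z]=0$, $[\cH_z,z]=-\partial_z$, and self-adjointness one gets $\tfrac{d}{dt}\|z\psi^\ep\|^2 \leq 2\|\partial_z\psi^\ep\|\|z\psi^\ep\|$; coupling this with the energy identity and applying Gronwall yields at worst exponential growth of $\|z\psi^\ep\|^2$, so the other three components of $\|\psi^\ep\|_{\Sigma^1}$ stay finite on every bounded interval, giving $\Tmax^\ep=+\infty$.

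For the focusing case $\lmd=-1$ with $V\geq 0$ the $L^4$-term loses its sign, but I estimate it by the anisotropic Gagliardo--Nirenberg bound
\[ \|\psi\|_{L^4(\R^3)}^{4} \lesssim \|\nabla_x\psi\|_{L^2}^{2}\bigl(\|\partial_z\psi\|_{L^2}^{2}+M\bigr), \]
obtained by applying 2D Gagliardo--Nirenberg in $x$ fibrewise followed by the 1D inequality $\|f\|_{L_z^\infty}^2 \lesssim \|f\|_{L^2}\|\partial_z f\|_{L^2}$. Plugged into the energy identity, the $\ep^{-2}\|\nabla_x\psi\|^2$-term dominates the focusing contribution provided $\ep^2(\|\partial_z\psi\|^2+M)\ll 1$, and a continuity/bootstrap argument provides an $\ep_*=\ep_*(\|\psi_0\|_{\Sigma^1})$ below which uniform $L_t^\infty$ bounds on $\|\nabla_x\psi^\ep\|,\||x|\psi^\ep\|,\|\partial_z\psi^\ep\|$ hold; $\|z\psi^\ep\|$ is then controlled by the same Gronwall step and $\Tmax^\ep=+\infty$. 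The remaining case (focusing with $V$ of arbitrary sign) runs the same bootstrap only up to a prescribed finite $T$: the $\int V|\psi^\ep|^2$-term is absorbed using the $\|z\psi^\ep\|$-Gronwall on $[0,T]$, and the smallness required of $\ep$ to close the bootstrap now depends on $T$, giving $\Tmax^\ep\geq T$ for all sufficiently small $\ep$, and hence $\Tmax^\ep\to+\infty$ as $\ep\to+0$. The main obstacle I expect is in this focusing bootstrap: one has to track carefully the constants linking the energy bound on $\|\partial_z\psi^\ep\|$ (which drives the Gagliardo--Nirenberg estimate of the $L^4$-term) and the Gronwall bound on $\|z\psi^\ep\|$, so that the condition $\ep^2(\|\partial_z\psi^\ep\|^2+M)\ll 1$ stays self-consistent along the whole flow, and for the $\Tmax^\ep\to+\infty$ statement the quantitative dependence of the required $\ep$-smallness on $T$ must be made explicit.
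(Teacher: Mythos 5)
Your local well-posedness argument and your treatment of the defocusing case $\lmd=+1$ (conservation of $E_0^\ep$, sub-quadraticity of $V$, the virial-type bound $\f{d}{dt}\|z\psi^\ep\|_{L^2}^2\lesssim \|z\psi^\ep\|_{L^2}\|\partial_z\psi^\ep\|_{L^2}$, Gronwall) coincide with the paper's proof. The focusing cases, however, contain a genuine gap. Your bootstrap hypothesis $\ep^2(\|\partial_z\psi^\ep\|_{L^2}^2+M)\ll 1$ cannot be propagated for data of general size: the conserved quantity $E_0^\ep[\psi_0]$ is itself of order $A[\psi_0]/\ep^2$, so even after the focusing term has been absorbed the energy identity only returns $\|\partial_z\psi^\ep(t)\|_{L^2}^2\le 2E_0^\ep[\psi_0]\le 2A[\psi_0]/\ep^2+O(1)$, whence $\ep^2\|\partial_z\psi^\ep(t)\|_{L^2}^2\le 2A[\psi_0]+O(\ep^2)$ --- not small unless $\|\nabla_x\psi_0\|_{L^2}$ and $\||x|\psi_0\|_{L^2}$ are, which would turn the theorem into a small-data statement. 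A related problem is your symmetrized Gagliardo--Nirenberg bound $\|\psi\|_{L^4}^4\lesssim\|\nabla_x\psi\|_{L^2}^2(\|\partial_z\psi\|_{L^2}^2+M)$: it is quadratic in $\|\partial_z\psi\|_{L^2}$, and exactly this costs you the mechanism that makes the large-data argument close.

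The paper keeps the asymmetric form $\|\psi\|_{L^4}^4\lesssim \|\nabla_x\psi\|_{L^2}^2\|\partial_z\psi\|_{L^2}\|\psi\|_{L^2}$, i.e.\ a bound $\lesssim A[\psi]\,B[\psi]^{1/2}M^{1/2}$ that is \emph{sublinear} in $B$, and bootstraps the single quantity $G[\psi]=A[\psi]+\ep^2B[\psi]$, for which the energy inequality becomes $G\le G_0+\ep C_*M^{1/2}G^{3/2}$ with an explicit prefactor $\ep$ on the superlinear term; a continuity argument on the two positive roots $X_1^\ep<X_2^\ep$ of $y\mapsto G_0+\ep C_*M^{1/2}y^{3/2}-y$ then traps $G[\psi(t)]<X_1^\ep\lesssim G_0$ for all time once $\ep\le\ep_*(\|\psi_0\|_{\Sigma^1})$. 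No uniform-in-$\ep$ bound on $\|\partial_z\psi^\ep\|_{L^2}$ is needed: for fixed $\ep$ the resulting bound $B\le X_1^\ep/\ep^2$ is finite, which suffices for $\Tmax^\ep=+\infty$. For the third bullet with sign-indefinite $V$ the paper does not attempt a direct energy bootstrap at all --- the coupling between the exponentially growing weight $\|z\psi^\ep\|_{L^2}$ and the focusing term makes your $T$-dependent bootstrap still harder to close --- but instead deduces $\Tmax^\ep\to+\infty$ from global well-posedness of (\ref{lmt}) in $L_x^2\Sigma_z^1$ (Theorem \ref{wplmt1}, valid for both signs of $\lmd$ when $\sigma=1$) combined with iteration of the approximation result, Proposition \ref{3}. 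You should either adopt the $G$-bootstrap for the second bullet and the limit-equation route for the third, or supply a substitute for the missing sublinearity in $B$.
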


\begin{proof}
(\ref{ep}) is equivalent to the integral equation
\begin{equation}\label{ep2}
\psiep(t) =  e^{-it(\cH/\ep^2+\cH_z)} \psi_0 -i\lmd \int_0^t e^{-i(t-s)(\cH/\ep^2+\cH_z)}[|\psiep|^2\psiep](s) ds .
\end{equation}
For $a>0$ and $0<T\le1$, we define
\[ M(a,T)=\{ \psi \in L^\infty([0,T],\Sigma^1) \cap L^4([0,T],\Sigma_{x,z}^{4,2}) : \| \psi \|_{L_t^\infty \Sigma^1\cap L_t^4\Sigma_{x,z}^{4,2}([0,T]) } \le a \} \]
and 
\[ \Psi^\ep[\psi] :=  e^{-it(\cH/\ep^2+\cH_z)} \psi_0 -i\lmd \int_0^t e^{-i(t-s)(\cH/\ep^2+\cH_z)}[|\psi|^2\psi](s) ds .\]
We will choose $T$ and $a$ so that $\Psi^\ep :M(a,T) \to M(a,T)$ and is a contraction.
By (\ref{eqH}), the unitarity of $e^{it\cH_z}$ in $L_z^2(\R)$,  Minkowski's inequality, Strichartz's estimate (\ref{Step1}), and (\ref{Step2}),
\begin{equation*}
\begin{split}
\| \langle x \rangle \Psi^\ep[\psi]& \|_{L_t^{\infty} L_\bfx^2([0,T])} +\| \nabla_x  \Psi^\ep[\psi] \|_{L_t^{\infty} L_\bfx^2([0,T])}   \\
&\lesssim \| \psi_0 \|_{L_z^2 \Sigma_x^1}  +  \Big\| \int_0^t e^{-i(t-s)(\cH/\ep^2)}\cH_0^\h e^{is\cH_z}[|\psi|^2\psi](s) ds \Big\|_{L_t^\infty L_x^2L_z^2} \\
 &\lesssim \| \psi_0 \|_{L_z^2 \Sigma_x^1} + \| \cH_0^\h e^{it\cH_z} |\psi|^2\psi  \|_{L_z^2 L_{t,x}^{4/3} } .\\
\end{split}
\end{equation*}
Using (\ref{eqH}) and Minkowski again, we have
\begin{equation*}
\begin{split}  \| \cH_0^\h e^{it\cH_z} |\psi|^2\psi  \|_{ L_z^2 L_{t,x}^{4/3} } \lesssim&  \| \langle x \rangle e^{it\cH_z} |\psi|^2\psi  \|_{ L_z^2 L_{t,x}^{4/3}}   +  \|  \nabla_x e^{it\cH_z} |\psi|^2\psi  \|_{ L_z^2 L_{t,x}^{4/3}} \\
\lesssim&  \| \langle x \rangle  |\psi|^2\psi  \|_{L_{t,x}^{4/3} L_z^2}   +  \|  \nabla_x  |\psi|^2\psi  \|_{L_{t,x}^{4/3} L_z^2}.
\end{split}
\end{equation*}
By H\"{o}lder's inequality and Gagliardo-Nirenberg's inequality for $L_z^\infty$,
\begin{equation}\label{Psinl}
\begin{split}
&\| \langle x\rangle   |\psi|^2\psi  \|_{L_{t,x}^{4/3} L_z^2}   +  \| \nabla_x |\psi|^2\psi \|_{L_{t,x}^{4/3} L_z^2}\\
\lesssim& \Bigl\| \| \langle x \rangle \psi\|_{L_z^2} \| \psi\|_{L_z^\infty}^2  \Bigr\|_{L_{t,x}^{4/3}}   +  \Bigl\| \| \nabla_x \psi \|_{L_z^2} \| \psi\|_{L_z^\infty}^2  \Bigr\|_{L_{t,x}^{4/3}} \\
\lesssim&  (\| \langle x \rangle \psi \|_{L_{t,x}^4L_z^2}+\| \nabla_x \psi \|_{L_{t,x}^4L_z^2})\| \psi \|_{L_{t,x}^4L_z^2} \| \partial_z\psi \|_{L_{t,x}^4L_z^2} 
\end{split}
\end{equation}
By Minkowski's inequality, Sobolev's embedding, and H\"{o}lder's inequality,  
\begin{equation*}
\| \psi \|_{L_{t,x}^4 L_z^2([0,T])} \lesssim  \| \psi \|_{L_t^4L_z^2 L_x^4}  \lesssim   \| \psi \|_{L_t^4L_z^2 H_{x}^1} \lesssim  \| \psi \|_{L_t^\infty \Sigma^1([0,T])}T^{\f{1}{4}}.
\end{equation*}
Therefore,
\begin{equation*}
\begin{split}
\| \langle x \rangle \Psi^\ep[\psi] \|_{L_t^\infty L_{\bfx}^2}& +\| \nabla_x \Psi^\ep[\psi] \|_{L_t^\infty L_{\bfx}^2}   \\
&\lesssim \| \psi_0 \|_{L_z^2 \Sigma_x^1} + (\| \langle x \rangle \psi \|_{L_{t,x}^4L_z^2}+\| \nabla_x \psi \|_{L_{t,x}^4L_z^2})\| \psi \|_{L_t^\infty \Sigma^1}^2T^{\f{1}{4}}. \\
\end{split}
\end{equation*}
If we change the $L_t^\infty L_{\bfx}^2$ norm to  the $L_{t,x}^4L_z^2$ on the left-hand side, we obtain the same bound.  For derivatives and weights in the $z$-direction, we use Lemma \ref{sigmaz} to commute $\partial_z$, $z$ and $e^{it\cH_z}$. Then, we also have
\begin{equation*}
\begin{split}
\| \langle z \rangle \Psi^\ep[\psi] &\|_{L_t^\infty L_{\bfx}^2 \cap L_{t,x}^4 L_z^2([0,T])} +\| \partial_z \Psi^\ep[\psi] \|_{L_t^\infty L_{\bfx}^2 \cap L_{t,x}^4L_z^2([0,T])}   \\
&\lesssim \| \psi_0 \|_{L_x^2 \Sigma_z^1} + (\| \langle z \rangle \psi \|_{L_{t,x}^4L_z^2}+\| \partial_z \psi \|_{L_{t,x}^4L_z^2})\| \psi \|_{L_t^\infty \Sigma^1}^2T^{\f{1}{4}}. \\
\end{split}
\end{equation*}
Note that because we assume $T\le1$, Lemma \ref{sigmaz} implies 
\[ \|e^{-it\cH_z}u\|_{\Sigma_z^1} \le C \|u\|_{\Sigma_z^1} \qquad  t\in [0,T]\]
for an absolute constant $C$.
Therefore, we obtain
\begin{equation} \label{Psi}
\begin{split} 
 \| \Psi^\ep[ \psi] \|_{L_t^\infty \Sigma^1\cap L_t^4\Sigma_{x,z}^{4,2}([0,T])} &\le C_0\| \psi_0 \|_{\Sigma^1} + CT^{\f{1}{4}} \| \psi\|_{L_t^\infty \Sigma^1 \cap L_t^4\Sigma_{x,z}^{4,2}(0,T)}^3 \\
 &\le  C_0\| \psi_0 \|_{\Sigma^1}+CT^{\f{1}{4}}a^3. 
\end{split}
\end{equation}
for some $C_0\ge1$. If we choose
\begin{equation} \label{T1} 
a=2C_0\| \psi_0 \|_{\Sigma^1} \qquad T\le  \min \{1, (\f{1}{2C a^2})^4\}  
\end{equation}
$\Psi^\ep$ is a mapping on $M(a,T)$. \\

Next, for the contraction, we use the same argument. Then, we have
\begin{equation*}
\begin{split}
 &\| \Psi^\ep[\psi_1]-\Psi^\ep[\psi_2] \|_{L_t^\infty \Sigma^1\cap L_t^4\Sigma_{x,z}^{4,2}([0,T])} \\
 &\lesssim \|\psi_1-\psi_2\|_{L_t^\infty \Sigma^1 \cap L_t^4\Sigma_{x,z}^{4,2}} (\|\psi_1\|_{L_t^\infty \Sigma^1 \cap L_t^4\Sigma_{x,z}^{4,2}}^2+\|\psi_2\|_{L_t^\infty \Sigma^1\cap L_t^4\Sigma_{x,z}^{4,2}}^2)T^{\f{1}{4}}\\
&\lesssim \| \psi_1-\psi_2 \|_{L_t^\infty \Sigma^1 \cap L_t^4\Sigma_{x,z}^{4,2}} T^{\f{1}{4}}a^2.
\end{split}
\end{equation*}
Thus, we establish the contraction property. Uniqueness and continuity statements are easy consequences of the fixed point argument. Persistence of regularity is obtained based on the following estimate. For any $k\in \N$, by (\ref{eqH}), Lemma \ref{sigmaz}, and interpolation 
\[ \|\partial_z^{l} u\|_{L_z^2}\lesssim \|\partial_z^k u \|_{L_z^2}^{\f{l}{k}} \|u\|_{L_z^2}^{\f{k-l}{k}} \qquad l=1,2, \cdots, k-1\]
we have
\begin{equation*}
\|\psi\|_{L_t^\infty \Sigma^k \cap L_t^4 \Sigma_{x,z}^{4,2,k}([0,T])} \le C_k\|\psi_0\|_{\Sigma^k}+  C_k T^{\f{1}{4}} \| \psi\|_{L_t^4\Sigma_{x,z}^{4,2,k}([0,T])}\| \psi \|_{L_t^\infty \Sigma^1([0,T])}^2.
\end{equation*}
Hence, the solution in $L_t^\infty \Sigma^k \cap L_t^4 \Sigma_{x,z}^{4,2,k}$ maintains the same regularity as long as it exists in a larger space  $L_t^\infty \Sigma^1 \cap L_t^4 \Sigma_{x,z}^{4,2}$.\\

To prove the properties of $\Tmax^\ep$, we estimate $\| \psi(t) \|_{\Sigma^1}$.\\

\noindent
\underline{Case 1: $\lmd=+1$} \\

Because $V(z)$ is sub-quadratic, there exists $c\ge1$ such that
\[ 0 \le V(z)+c(1+z^2) \simeq \langle z \rangle^2 . \]
Then, 
\begin{equation}
\label{eqSigma}
 \| \psi(t) \|_{\Sigma^1}^2 \simeq  K[\psi(t)] + B[\psi(t)] +c(M[\psi_0] +\| z \psi(t) \|_{L^2}^2) 
\end{equation}
where $K$ is the quantity defined in \eqref{ke} (which does not conserve in \eqref{ep}) and  
\[ B[\psi]:= \langle \cH_z\psi, \psi \rangle =\h \| \partial_z \psi \|_{L^2}^2 + \int_{\R^3} V(z)| \psi|^2 d\bfx. \]
Then, it holds
\begin{equation*}
\begin{split}
\| \psi(t) \|_{\Sigma^1}^2 &\lesssim E_0^\ep[\psi_0]  +cM[\psi_0]+ c\| z \psi(t)\|_{L^2}^2\\
&\le \big(|E_0^\ep[\psi_0]+cM[\psi_0]|^\h+ c^\f{1}{2}\| z \psi(t)\|_{L^2} \big)^2, \\
\end{split}
\end{equation*}
where $E_0^\ep$ is defined in \eqref{eep0}. Therefore, we reduce estimates of $\| \psi(t) \|_{\Sigma^1}$ to  $\| z \psi(t) \|_{L^2}$. 
\begin{equation}\label{estz}
\begin{split}
 \f{d}{dt}\| z \psi(t)\|_{L^2}^2&=2\text{Re}\langle  z\psi(t) , z \partial_t \psi(t) \rangle \\
 &=-2\text{Im}\langle z^2\psi(t), \cH_z\psi(t) \rangle \\
 &= \text{Im}\langle[ \partial_z^2, z^2]\psi(t), \psi(t) \rangle \\
 &= \text{Im}\langle (1 +2z \partial_z)\psi(t) , \psi(t) \rangle \\
 &\lesssim \| z\psi(t)\|_{L^2}\| \partial_z \psi(t) \|_{L^2}.\\
\end{split}
\end{equation}
Because
\[ \| \partial_z \psi(t) \|_{L^2} \le \| \psi(t) \|_{\Sigma^1} \lesssim  | E_0^\ep[\psi_0] + cM[\psi_0]|^\h+ c^{\h}\| z \psi(t)\|_{L^2} \]
we have 
\[  \f{d}{dt}\| z \psi(t)\|_{L^2}^2 \lesssim |E_0^\ep[\psi_0] + cM[\psi_0]| + c\| z\psi(t) \|_{L^2}^2. \]
By Gronwall's lemma, 
\[ \| z\psi(t)\|_{L^2} \le  \big( \f{1}{c}|E_0^\ep[\psi_0] + cM[\psi_0]| + \| z\psi_0 \|_{L^2}^2\big)^\h e^{Cc|t|},  \]
which is followed by
\begin{equation}\label{estSigma}
 \| \psi(t) \|_{\Sigma^1} \lesssim  |E_0^\ep[\psi_0]+cM[\psi_0]|^\h +  (|E_0^\ep[\psi_0] + cM[\psi_0]| + c\| z\psi_0 \|_{L^2}^2\big)^\h e^{Cc|t|} .
\end{equation}
Hence, the solution is global.\\

\noindent
\underline{Case 2: $\lmd=-1$ and $V$ is bounded below}\\

Note that if $V$ is bounded below, we can assume $V$ is non-negative without loss of generality. Indeed, suppose $\psi(t)$ is the solution to \eqref{ep} and $c:= \inf_{z\in \R}V(z) \in (-\infty, 0)$, $\psi_c(t):=e^{ict}\psi(t)$ solves

\begin{equation}
i\partial_t \psi=\f{1}{\ep^2}\cH \psi+\cH_z\psi -c\psi +\lmd |\psi|^{2\sigma} \psi.
\end{equation}
Hence, in Case 2, we assume $V$ is non-negative.\\

By H\"{o}lder's  and  Gagliardo-Nirenberg's inequalities,
\[ \| \psi \|_{L^4}^4 \lesssim \Big\| \|\nabla_x \psi\|_{L_x^2}^\h \|\psi\|_{L_x^2}^\h \Big\|_{L_z^4}^4 \lesssim \|\nabla_x \psi\|_{L^2}^2 \| \partial_z \psi \|_{L^2}  \| \psi \|_{L^2} \]
and we obtain
\begin{equation}\label{estE0}
\f{1}{\ep^2}K[\psi(t)]+B[\psi(t)]-C_*K[\psi(t)]B[\psi(t)]^\h M[\psi_0]^\h \le E_0^\ep[\psi_0]
\end{equation}
for some $C_*>0$. Let $ G[\psi]:=K[\psi]+\ep^2 B[\psi] $, one has
\[  \f{1}{\ep^2}G[\psi(t)]-\f{C^*}{\ep}G[\psi(t)]^{\f{3}{2}}M[\psi_0]^\h \le E_0^\ep[\psi_0] < \f{1}{\ep^2}K[\psi_0]+B[\psi_0], \]
that is
\begin{equation*}
\begin{split}
G[\psi(t)] < K[\psi_0]+\ep^2B[\psi_0] +\ep C_* M[\psi_0]^\h G[\psi(t)]^{\f{3}{2}} .\\
\end{split}
\end{equation*}
We consider the function
 \begin{equation*}
\begin{split}
f_\ep(y):= K[\psi_0]+\ep^2 B[\psi_0] + \ep C_* M[\psi_0]^\h y^{\f{3}{2}}-y \qquad\text{for} \quad y \in \R_{\ge0}. \\
\end{split}
\end{equation*}
If $\ep$ satisfies
\begin{equation}\label{asep}
 0<\ep \le \Big( \f{4}{27C^* M[\psi_0]^\h (K[\psi_0]+B[\psi_0])}\Big)^\h  , 
 \end{equation}
there exist  $0<X_1^\ep< X_2^\ep (X_j^\ep=X_j^\ep(K[\psi_0], B[\psi_0], M[\psi_0], \ep))$ such that the condition
\[ 0<y < X_1^\ep, \quad  X_2^\ep < y\]
is equivalent to $f_\ep(y)>0$. Because $f_\ep(0)=K[\psi_0]+\ep^2B[\psi_0]>0$, $f_\ep(\f{4}{9\ep^2 {C_*}^2 M[\psi_0]})<0$, and  $f$ is convex, 
\begin{equation}
 G[\psi_0]=K[\psi_0]+\ep^2B[\psi_0]< X_1^\ep < 3(K[\psi_0]+\ep^2 B[\psi_0])
\end{equation}
\begin{equation}\label{X2}
\f{4}{9\ep^2 {C_*}^2 M[\psi_0]} <X_2^\ep
\end{equation}
hold for any $\ep>0$ that satisfies (\ref{asep}). Especially, it holds
\begin{equation}
X_1^\ep= K[\psi_0] +O(\ep) ,\quad   X_2^\ep=\f{1}{\ep^2 C_*^2 M[\psi_0]} +O(1) \text{\quad as \quad } \ep\to+0.
\end{equation}
The former follows from the fact  $X_1^\ep$ is bounded in the condition (\ref{asep})  and satisfies $X_1^\ep=K[\psi_0]+\ep^2 B[\psi_0] + \ep C_* M[\psi_0]^\h {X_1^\ep}^{\f{3}{2}}$. On the other hand, if we set $X_\ep^2=Y/(\ep^2 C_*^2 M[\psi_0])$, $Y$ satisfies
\begin{equation}\label{eqY}
 Y = (\ep^2 C_*^2 M[\psi_0])(K[\psi_0]+\ep^2 B[\psi_0]) + Y^\f{3}{2}. 
 \end{equation}
If $\ep$ is sufficiently small, (\ref{eqY}) has two solutions $Y_1^\ep$ and $Y_2^\ep$ such that $Y_1^\ep \to +0$ and  $Y_2^\ep\to 1$ as $\ep\to+0$. By (\ref{X2}), $Y=Y_2^\ep$. Hence, by (\ref{eqY}), for sufficiently small $\ep$ one has
\begin{equation*}
\begin{split}
|X_2^\ep-\f{1}{\ep^2 C_*^2 M[\psi_0]}|= \f{|(1-(Y_2^\ep)^\h)(1+(Y_2^\ep)^\h)|}{\ep^2 C_*^2 M[\psi_0]} \le 4 (K[\psi_0]+\ep^2 B[\psi_0]).
\end{split}
\end{equation*}
  Because $G[\psi(t)]$ is continuous with respect to $t$ as long as $G[\psi(t)]$ exists, it follows that
\[ G[\psi(t)]< X_1^\ep. \]
Hence, the estimate of $\|\psi(t) \|_{\Sigma^1}$ comes down to $\|z \psi(t) \|_{L^2}$.  The remainder of the proof is the same as Case 1.\\

\noindent
\underline{Case 3: $\lmd=-1$ and $V$ is unbounded below}.\\

The conclusion follows from existence of a global solution to (\ref{lmt}) (Theorem \ref{main2}) and iteration of Proposition \ref{3}.
\end{proof}

\subsection{The case $\sigma=2$}

\begin{thm}\label{wpep2}
Let $\sigma=2$ and $V$ satisfy \eqref{asV}. For any data $ \psi_0 \in \Sigma^1$ and any $\ep>0$, there exist $T>0$ and a unique solution to (\ref{ep})  $ \psi^\ep \in C([0, T], \Sigma^1)\cap L^3([0, T],\Sigma_{x,z}^{6,2})$, depending continuously on $\psi_0$. Suppose $\Tmax^\ep \in (0,+\infty]$ is the maximal time  of existence. If $\lmd=+1$, $\Tmax^\ep \to +\infty$ as $\ep\to+0$.
\end{thm}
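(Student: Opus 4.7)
Following the strategy used in the cubic case (Theorem \ref{wpep1}), the plan is to set up a contraction on a small ball of $L^\infty([0,T], \Sigma^1) \cap L^3([0,T], \Sigma_{x,z}^{6,2})$ for the Duhamel map
\[ \Psi^\ep[\psi](t) := e^{-it(\cH/\ep^2+\cH_z)}\psi_0 - i\lmd \int_0^t e^{-i(t-s)(\cH/\ep^2+\cH_z)}\bigl[|\psi|^4\psi\bigr](s)\,ds. \]
The 2D-admissible Strichartz pair $(q,p)=(3,6)$ matches the integrability index in $\Sigma_{x,z}^{6,2}$. Exactly as before, I use the norm equivalence (\ref{eqH}) to convert $\cH_0^{1/2}$ into $\nabla_x$ and $\langle x\rangle$, Lemma \ref{sigmaz} to commute $\partial_z$, $z$, and $e^{it\cH_z}$, and the Strichartz estimates (\ref{Step1})--(\ref{Step2}). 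These reduce the closure of the fixed point to estimating $|\psi|^4\psi$ in the dual norm $L^{3/2}_t L^{6/5}_x L^2_z$, together with its companion norms involving $\nabla_x$, $\partial_z$, $\langle x\rangle$ and $\langle z\rangle$.

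The main analytic step, and the main obstacle, is the quintic nonlinear estimate. Because the quintic power is $\dot H^1$-critical in three space dimensions, the estimate is tight; in particular $\Sigma^1$ does not embed into $L^\infty_t L^6_x H^1_z$, so one cannot simply place all five factors of $\psi$ into the same norm. I would apply the 1D Sobolev embedding $H^1_z\hookrightarrow L^\infty_z$ pointwise in $(t,x)$ to reduce the $z$-norm of the nonlinearity to $\|\psi\|_{L^\infty_z}^4\|\psi\|_{L^2_z}$, and then distribute the five factors via mixed-norm H\"older, using the embeddings $\Sigma_{x,z}^{6,2}\hookrightarrow L^6_x H^1_z\hookrightarrow L^6_x L^\infty_z$ and $\Sigma^1\hookrightarrow L^6_x L^2_z$ (the latter from 2D Sobolev $H^1_x\hookrightarrow L^6_x$ combined with Minkowski's inequality). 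Interpolation in time between the Strichartz level $L^3_t$ and the energy level $L^\infty_t$, together with a small H\"older-in-time loss $T^\gamma$ ($\gamma>0$) obtained by passing from the critical index $3/2$ to a slightly smaller Lebesgue index in $t$, yields a bound of the form
\[ \||\psi|^4\psi\|_{L^{3/2}_t L^{6/5}_x L^2_z([0,T])} \lesssim T^\gamma\, \|\psi\|_{L^3_t\Sigma_{x,z}^{6,2}}^{a}\,\|\psi\|_{L^\infty_t\Sigma^1}^{5-a} \]
for an appropriate exponent $a\in\{0,\dots,5\}$. The analogous bound for $\Psi^\ep[\psi_1]-\Psi^\ep[\psi_2]$ produces the contraction for $T$ small (depending on $\|\psi_0\|_{\Sigma^1}$ and $\ep$), and persistence of regularity follows by the same type of estimate in higher norms.

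For the behavior of $\Tmax^\ep$ when $\lmd=+1$, I would not attempt an $\ep$-uniform $\Sigma^1$-bound from the conservation of $E_0^\ep$: since $E_0^\ep[\psi_0]\sim 1/\ep^2$ for a fixed $\psi_0\in\Sigma^1$, the cancellations used in Case 1 of Theorem \ref{wpep1} do not produce an $\ep$-independent bound on $\|\partial_z\psi(t)\|_{L^2}$. Instead, I invoke Theorem \ref{main2}, which supplies a global-in-time $\Sigma^1$-solution $\phi$ of the averaged equation (\ref{lmt}) for $\sigma=2,\lmd=+1$. Fixing any $T>0$, the quantity $\|\phi\|_{L^\infty([0,T],\Sigma^1)}$ is finite, and, exactly as in Case 3 of the proof of Theorem \ref{wpep1}, iterating the short-time convergence result of Section \ref{Sml} over successive subintervals of length controlled by this uniform bound extends $\psiep$ up to time $T$ for every $\ep$ below some $\ep_T>0$. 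Since $T>0$ is arbitrary, this gives $\Tmax^\ep\to+\infty$ as $\ep\to+0$.
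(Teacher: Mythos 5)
Your overall architecture (Duhamel map, pair $(3,6)$, norm equivalence (\ref{eqH}), Lemma \ref{sigmaz} for the $z$-direction, and the use of the global (\ref{lmt}) solution plus the approximation result to get $\Tmax^\ep\to+\infty$) matches the paper, and the last part of your argument is essentially the paper's. But there is a genuine gap in the central nonlinear estimate: the claimed bound
\[ \||\psi|^4\psi\|_{L^{3/2}_t L^{6/5}_x L^2_z([0,T])} \lesssim T^\gamma\, \|\psi\|_{L^3_t\Sigma_{x,z}^{6,2}}^{a}\,\|\psi\|_{L^\infty_t\Sigma^1}^{5-a}, \qquad \gamma>0, \]
is not available. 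The quintic nonlinearity in three dimensions is exactly $\dot H^1$-critical, and every admissible distribution of the five factors between the Strichartz norm $L^3_t L^6_x L^2_z$ and the energy norm $L^\infty_t\Sigma^1$ produces time exponents summing to precisely $2/3=1/(3/2)$: the factor carrying $\nabla_x$ (or $\langle x\rangle$) and the factor $\partial_z\psi$ coming from $\|\psi\|_{L^\infty_z}^4\lesssim\|\partial_z\psi\|_{L^2_z}\|\psi\|_{L^6_z}^3$ must each sit at the $L^3_t L^6_x L^2_z$ level (since $\Sigma^1$ does not control $\nabla_x\psi$ or $\partial_z\psi$ in $L^6_xL^2_z$), and the remaining three factors sit in $L^\infty_t L^6_\bfx$. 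Switching to a different admissible dual pair $(q_1',p_1')$ with $q_1'<3/2$ forces the derivative factors into Strichartz spaces with correspondingly larger time exponents, and the admissibility relation $2/q+2/p=1$ makes the budget close up exactly again; no positive power of $T$ survives. The paper itself acknowledges this in the proof of Proposition \ref{3} for $\sigma=2$: ``we cannot obtain positive power $T$ in the bound of $A_1$.'' Consequently your single small ball with a $T^\gamma$ contraction factor does not close.

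The mechanism the paper uses instead is the standard one for critical local well-posedness: the ball $M(a,b,T)$ carries \emph{two} parameters, with $a\simeq\|\psi_0\|_{\Sigma^1}$ controlling the energy component and $b$ chosen \emph{small} (of size $\min\{(2^4C_1(C_0\|\psi_0\|_{\Sigma^1})^3)^{-1}, C_0\|\psi_0\|_{\Sigma^1}\}$) controlling the critical Strichartz component; the nonlinear contribution is bounded by $C_1a^3b^2$ with no $T$ factor, and smallness comes from $b$. The interval $[0,T]$ is then chosen so that $\|e^{-it(\cH/\ep^2+\cH_z)}\psi_0\|_{L^3_t\Sigma^{6,2}_{x,z}([0,T])}\le b/2$, which is always possible for fixed $\ep$ by continuity of the free evolution's spacetime norm, and which, via the constant $(\ep^2+T)^{1/3}$ in (\ref{Step1}), yields a lifespan depending only on $\|\psi_0\|_{\Sigma^1}$ once $\ep$ is small --- the latter point being exactly what makes the iteration in your final paragraph (and the paper's) work. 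You should replace the $T^\gamma$ step with this small-Strichartz-norm argument; the rest of your proposal then goes through.
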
 

\begin{proof} For $0< T\le 1$ and $a,b>0$ we define 
\begin{equation*}
\begin{split}
 M(a,b,T)=\{ \psi \in L^\infty([0,T],\Sigma^1) \cap L^3([0,T],\Sigma_{x,z}^{6,2}) :  \| \psi \|_{L^\infty([0,T], \Sigma^1)}\le a, \\
 \| \psi\|_{L^3([0,T],\Sigma_{x,z}^{6,2})} \le b \}.
\end{split}
\end{equation*}
Bound for $\Psi^\ep[\psi]$ is similar to the case $\sigma=1$.
Instead of (\ref{Psinl}),  we use the following estimate, which is obtained based on H\"{o}lder's and Gagliardo-Nirenberg-Sobolev's inequalities:
\begin{equation*}
\begin{split}
\| \nabla_x |\psi|^4\psi  \|_{L_t^{3/2}L_x^{6/5}L_z^2([0,T])} &\lesssim \Big\|  \|\nabla_x \psi\|_{L_z^2} \|\psi\|_{L_z^\infty}^4 \Big\|_{L_t^{3/2}L_x^{6/5}}\\
&\lesssim \Big\|  \|\nabla_x\psi\|_{L_z^2} \|\partial_z\psi\|_{L_z^2}\|\psi\|_{L_z^6}^3 \Big\|_{L_t^{3/2}L_x^{6/5}}\\
&\lesssim \|\nabla_x\psi\|_{L_t^3L_x^6L_z^2} \|\partial_z\psi\|_{L_t^3L_x^6L_z^2} \|\psi\|_{L_t^\infty L_{x,z}^6}^3 \\
&\lesssim \|\nabla_x \psi\|_{L_t^3L_x^6L_z^2} \|\partial_z\psi\|_{L_t^3L_x^6L_z^2} \|\nabla_\bfx \psi\|_{L_t^\infty L_\bfx^2}^3.
\end{split}
\end{equation*}
We bound the other derivatives or weights for $\Psi^\ep$ in the same manner. Then, we have 
\begin{equation}
\begin{split}
\| \Psi^\ep[\psi] \|_{L_t^\infty \Sigma^1([0,T])}&\le C_0\|\psi\|_{L_t^\infty\Sigma^1}+ C_1\|\psi\|_{L_t^\infty\Sigma^1([0,T])}^3  \|\psi\|_{L_t^3\Sigma_{x,z}^{6,2}([0,T])}^2\\
&\le  C_0\|\psi_0\|_{\Sigma^1} + C_1 a^3 b^2
\end{split}
\end{equation}
and
\begin{equation}
\begin{split}
\| \Psi^\ep[\psi] \|_{L_t^3 \Sigma_{x,z}^{6,2}([0,T])}&\le\|e^{-it(\cH/\ep^2+\cH_z)}\psi_0\|_{L_t^3\Sigma_{x,z}^{6,2}([0,T])}+ C_1\|\psi\|_{L_t^\infty\Sigma^1([0,T])}^3  \|\psi\|_{L_t^3\Sigma_{x,z}^{6,2}([0,T])}^2\\
&\le \|e^{-it(\cH/\ep^2+\cH_z)}\psi_0\|_{L_t^3\Sigma_{x,z}^{6,2}([0,T])} + C_1 a^3b^2
\end{split}
\end{equation}
for some $C_0$, $C_1\ge1$. If we choose $a=2C_0\|\psi_0\|_{\Sigma^1}$, 
\begin{equation*}
 b\le \min\{ \f{1}{2C_1 a^3}, \f{a}{2}  \}=\min\{ \f{1}{2^4C_1(C_0\|\psi_0\|_{\Sigma^1})^3}, C_0\|\psi_0\|_{\Sigma^1}  \}
 \end{equation*}
and $0<T\le1$ so that 
\begin{equation}\label{a2}
 \|e^{-it(\cH/\ep^2+\cH_z)}\psi_0\|_{L_t^3\Sigma_{x,z}^{6,2}([0,T])}\le \f{b}{2}, 
\end{equation}
$\Psi^\ep$ is a mapping on $M(a,b,T)$. The contraction property, uniqueness, and continuity statements
are easy consequences of the fixed point argument. Persistence of regularity is obtained in the same manner as that stated in Theorem \ref{wpep1}. \\

We next prove $\Tmax^\ep \to +\infty$ as $\ep\to+0$  in the $\lmd=+1$ case. By Strichartz's estimate,
\[ \|e^{-it(\cH/\ep^2+\cH_z)}\psi_0\|_{L_t^3\Sigma_{x,z}^{6,2}([0,T])}\le C_2(\ep^2+T)^{\f{1}{3}} \|\psi_0\|_{\Sigma^1} \]
holds for some $C_2\ge1$. Then, if we assume
\begin{equation}\label{assep}
\ep^2\le \h \Big(\f{1}{C_2} \min\{ \f{1}{2^4C_1C_0^3 \|\psi_0\|_{\Sigma^1}^4}, C_0 \} \Big)^3,
\end{equation}
we can set $a$, $ b $ as mentioned above and
\begin{equation}\label{Tec2}
\begin{split}
T \le  \h\Big( \f{1}{C_2} \min\{ \f{1}{2^4C_1C_0^3 \|\psi_0\|_{\Sigma^1}^4}, C_0 \}  \Big)^3,
\end{split}
\end{equation}
which satisfies (\ref{a2}). $T$ is determined by $\|\psi_0\|_{\Sigma^1}$. Hence, retaking $\ep$ sufficiently small and  iterating Proposition \ref{3},  the solution to (\ref{ep}) can be extended to any compact time interval $I $ that satisfies $0\in I$ and $I \subsetneq [0. \Tmax)$, where $\Tmax$ is the maximal time of the solution to (\ref{lmt}) with an initial data $\psi_0$. If $\lmd=+1$, $\Tmax =+\infty$ (See Theorem \ref{wplmt2}). Hence, we have the conclusion.\\
\end{proof}

\section{Well-posed results of  (\ref{lmt})}\label{Wplmt}

\subsection{The case $\sigma=1$}
\begin{thm}\label{wplmt1}
Let $\sigma=1$ and $V$ satisfy \eqref{asV}. For any data $ \psi_0 \in L_x^2\Sigma_z^1$ there exists a unique global solution to (\ref{lmt})  $\phi \in C([0,\infty),L_x^2 \Sigma_z^1)$, depending continuously on $\psi_0$.
\end{thm}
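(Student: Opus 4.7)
The plan is to establish local well-posedness in $L_x^2\Sigma_z^1$ by a contraction argument and then extend to all time via mass conservation and Strichartz estimates. Writing Duhamel's formula for (\ref{lmt}) as
\[ \phi(t)= e^{-it\cH_z}\psi_0 -i\lmd\int_0^t e^{-i(t-s)\cH_z}\Fav(\phi(s))\,ds, \]
the key analytic input is a trilinear $L^2_x$ bound for the symmetric form
\[ \tilde\Fav(u_1,u_2,u_3):=\f{1}{2\pi}\int_0^{2\pi}e^{i\theta\cH}\bigl(e^{-i\theta\cH}u_1\cdot\overline{e^{-i\theta\cH}u_2}\cdot e^{-i\theta\cH}u_3\bigr)d\theta, \]
which satisfies $\Fav(\phi)=\tilde\Fav(\phi,\phi,\phi)$, namely
\[ \|\tilde\Fav(u_1,u_2,u_3)\|_{L^2(\R^2)}\lesssim\prod_{j=1}^3\|u_j\|_{L^2(\R^2)}. \]
This is the resonant-trilinear estimate that makes (CR) well-posed in $L^2(\R^2)$ in \cite{CR}; it comes from the spectral decomposition of $\tilde\Fav$ in the eigenbasis of $\cH$ (as already exploited in the $\langle\cH_0\phi,\phi\rangle$-conservation computation in Section~\ref{Pre}) combined with $L^4$-Strichartz-type bounds for $\{e^{i\theta\cH}\}_{\theta\in[0,2\pi]}$.

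Because $\tilde\Fav$ commutes with $\partial_z$ and with multiplication by $z$ (these act only in $z$, while $e^{\pm i\theta\cH}$ acts only in $x$), applying the trilinear bound slice-wise in $z$ and controlling the resulting $L^\infty_z$-factor via the 1D Sobolev embedding $H_z^1\hookrightarrow L_z^\infty$ gives
\[ \|\Fav(\phi)\|_{L_x^2\Sigma_z^1}\lesssim\|\phi\|_{L_x^2\Sigma_z^1}^3, \]
together with the corresponding cubic Lipschitz bound. Combined with $\|e^{-it\cH_z}\|_{L_x^2\Sigma_z^1\to L_x^2\Sigma_z^1}\le e^{C|t|}$ from Lemma~\ref{sigmaz}, a standard Banach fixed point on a ball in $C([0,T_0];L_x^2\Sigma_z^1)$ produces local well-posedness on an interval $[0,T_0]$ with $T_0=T_0(\|\psi_0\|_{L_x^2\Sigma_z^1})$.

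For global existence, I first record mass conservation,
\[ \f{d}{dt}M[\phi]=2\lmd\text{Im}\int_{\R^3}\bar\phi\,\Fav(\phi)\,d\bfx=\f{\lmd}{\pi}\text{Im}\int_0^{2\pi}\!\!\int_{\R^3}|e^{-i\theta\cH}\phi|^4\,d\bfx\,d\theta=0, \]
(after moving $e^{i\theta\cH}$ by adjointness, the inner integral is real). Combining the slice-wise trilinear $L^2_x$ bound with 1D Strichartz for $\{e^{-it\cH_z}\}$ in the admissible pair $(6,6)$, interpolating $L_{t,z}^{18/5}\subset L_{t,z}^2\cdot L_{t,z}^6$ by H\"older and re-inserting mass conservation into the $L_{t,z}^2$-factor, yields on any $[0,T]$
\[ \|\phi\|_{L_x^2 L_{t,z}^6}\le C\|\psi_0\|_{L^2_\bfx}+CT^{1/2}\|\psi_0\|_{L^2_\bfx}\|\phi\|_{L_x^2 L_{t,z}^6}^2, \]
which self-improves to $\|\phi\|_{L_x^2 L_{t,z}^6}\lesssim\|\psi_0\|_{L^2_\bfx}$ on windows of length $T=T(\|\psi_0\|_{L^2_\bfx})$ and can be iterated over any compact time interval.

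To propagate $\Sigma_z^1$-regularity globally, I apply $\partial_z$ and multiplication by $z$ to (\ref{lmt}). The commutators $[\partial_z,\cH_z]=V'(z)$ and $[z,\cH_z]=-\partial_z$ produce sub-quadratic/bounded perturbative inhomogeneities, and the derivative or weight slides onto a single factor of $\tilde\Fav$. Rerunning the Strichartz estimate with these sources and the already-controlled $L_x^2 L_{t,z}^6$ norm of $\phi$ closes a linear Gronwall-type system for $\|\partial_z\phi\|_{L_t^\infty L^2_\bfx}$ and $\|z\phi\|_{L_t^\infty L^2_\bfx}$; iteration on windows of length $T(\|\psi_0\|_{L^2_\bfx})$ yields at most exponential-in-time growth of the $L_x^2\Sigma_z^1$-norm, ruling out finite-time blow-up. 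The hard step is the trilinear $L^2_x$ boundedness of $\tilde\Fav$: since $e^{\pm i\theta\cH}$ is unitary on $L^2_x$ but unbounded on $L^p_x$ for $p\ne 2$, the estimate cannot be extracted pointwise in $\theta$ and genuinely relies on the resonance condition $n_1-n_2+n_3=n$ in the spectral expansion of the time-average, mirroring the cornerstone $L^2$-bound for the (CR) trilinear form.
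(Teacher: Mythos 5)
Your proposal is essentially correct, and its local-in-time half coincides with the paper's: the paper also runs a contraction in $C([0,T],L_x^2\Sigma_z^1)$ based on the cubic bound $\|\Fav(\phi)\|_{L_x^2\Sigma_z^1}\lesssim\|\phi\|_{L_x^2\Sigma_z^1}\|\partial_z\phi\|_{L^2}\|\phi\|_{L^2}$, obtained exactly as you describe --- the dual and homogeneous $L^4_{\theta,x}$ Strichartz estimates for $\{e^{i\theta\cH}\}$ over $[0,2\pi]$ applied slice-wise in $z$, followed by Gagliardo--Nirenberg in $z$ (see (\ref{reg1})--(\ref{Favbdd1})). Where you genuinely diverge is the globalization. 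The paper uses conservation of the Hamiltonian $E$ together with mass: the quartic term of $E$ is bounded by $\|\phi\|_{L^2}^3\|\partial_z\phi\|_{L^2}$, which is only linear in $\|\partial_z\phi\|_{L^2}$ and hence absorbable for either sign of $\lmd$, giving an a priori bound on $\|\partial_z\phi\|_{L^2}$ modulo $\|z\phi\|_{L^2}$; the weight is then controlled by $\f{d}{dt}\|z\phi\|_{L^2}^2\lesssim\|z\phi\|_{L^2}\|\partial_z\phi\|_{L^2}$ and Gronwall. You instead never touch the energy: mass conservation plus the $(6,6)$ Strichartz estimate for $\{e^{it\cH_z}\}$ gives the space-time bound $\|\phi\|_{L_x^2L_{t,z}^6}\lesssim\|\psi_0\|_{L^2}$ on windows whose length depends only on the conserved mass, after which $\partial_z\phi$ and $z\phi$ close a linear Gronwall system (the mixed-norm orderings needed for Minkowski all go the right way, so this works). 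Both routes are valid and sign-independent; yours buys a ``mass-subcritical in $z$'' space-time bound the paper does not record, at the price of invoking the $z$-Strichartz machinery, which the paper's proof of this particular theorem never needs (it uses only $L_t^\infty$ in time). The one inaccuracy worth flagging is your closing remark that the trilinear $L_x^2$ bound ``genuinely relies on the resonance condition'': it does not. It follows directly from the local-in-$\theta$ Strichartz estimates for $e^{i\theta\cH}$ on the compact interval $[0,2\pi]$ --- which is how both you (earlier in your own text) and the paper actually derive it; the resonance identity $n_1+n_2=n_3+n_4$ is used in the paper only to prove conservation of $A[\phi]$, which neither proof of this theorem requires.
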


\begin{proof} (\ref{lmt}) is equivalent to the  integral equation 
\begin{equation}\label{lmt2}
\phi(t)=e^{-it\cH_z}\psi_0-i\lmd\int_0^t e^{-i(t-s)\cH_z}F_{\text{av}}(\phi(s))ds.
\end{equation}
For $a>0$ and $0 <T\le 1$, we define
\[ M(a,T):=\{ \phi \in L([0,T],L_x^2\Sigma_z^1) : \| \phi \|_{L^\infty([0,T], L_x^2\Sigma_z^1)} \le a\} \]
and
\begin{equation}\label{intlmt}
 \Phi[\phi]:= e^{-it\cH_z}\psi_0-i\lmd\int_0^t e^{-i(t-s)\cH_z}F_{\text{av}}(\phi(s))ds .
\end{equation}
First, by Lemma \ref{sigmaz}, for some $C_0\ge1$ we have
\begin{equation}\label{ldu}
\begin{split}
 \| \Phi[\phi] \|_{L_t^\infty L_x^2 \Sigma_z^1([0,T])} \le C_0\| \psi_0 \|_{L_x^2 \Sigma_z^1}+C\Big\| \int_0^t \| F_{\text{av}}(\phi(s))  \|_{L_x^2 \Sigma_z^1} ds\Big\|_{L_t^\infty([0,T])}.
\end{split}
\end{equation}
We bound the second term on the right-hand side.  By commutativity of $e^{i\theta\cH}$ and $\partial_z$, Strichartz's estimate for $\{e^{i\theta\cH}\}_{\theta\in \R}$, Minkowski's inequality, and Gagliardo-Nirenberg's inequality, we have
\begin{equation}\label{reg1}
\begin{split}
\|\partial_z F_{\text{av}}(\phi(s)) \|_{L_\bfx^2} &\lesssim \| \partial_z (|e^{-i \theta \cH}\phi(t)|^2 e^{-i\theta\cH}\phi(t) )\|_{L_z^2 L_{\theta,x}^{4/3}(\R\times [0,2\pi]\times \R^2)}\\
&\lesssim \Big\| \| e^{-i \theta \cH} \partial_z\phi(t) \|_{L_{\theta,x}^4}  \| e^{-i \theta \cH} \phi(t) \|_{L_{\theta,x}^4}^2 \Big\|_{L_z^2}\\ 
&\lesssim \Big\| \| \partial_z\phi \|_{L_x^2} \|\phi \|_{L_x^2}^2 \Big\|_{L_z^2} \\
&\lesssim \| \partial_z \phi \|_{L_\bfx^2}  \| \partial_z \phi \|_{L_\bfx^2}  \|\phi \|_{L_\bfx^2}
\end{split}
\end{equation}
and a similar estimate holds if $\partial_z$ is replaced by $z$. Therefore, 
\begin{equation}\label{Favbdd1}
\begin{split}
\| F_{\text{av}}(\phi(s)) \|_{L_x^2 \Sigma_z^1} \lesssim \|\phi \|_{L_x^2\Sigma_z^1} \|\partial_z \phi \|_{L^2} \| \phi \|_{L^2} .\\
\end{split}
\end{equation}
consequently, we have
\[  \| \Phi[\phi] \|_{L_t^\infty L_x^2 \Sigma_z^1([0,T])} \le C_0 \| \psi_0 \|_{L_x^2 \Sigma_z^1} +C\| \phi \|_{L_t^\infty L_x^2 \Sigma_z^1([0,T])}^3T.\]
Thus, if we choose 
\begin{equation}\label{T0}
 a = 2C_0\| \psi_0 \|_{L_x^2 \Sigma_z^1} \qquad T\le \f{1}{Ca^2},
\end{equation}
 $\Phi$ is a mapping on $M(a,T)$.
Contraction, uniqueness, and continuous properties are also shown by the standard arguments.\\

We estimate $\|\phi(t)\|_{L_x^2\Sigma_z^1}$ to prove that the solution is global. We consider the case $\lmd=-1$ because this covers the case  $\lmd=+1$ as well.
Because $V(z)$ is sub-quadratic, there exists $c\ge 1$ such that
\[ 0 \le V(z)+c(1+z^2) \simeq \langle z \rangle^2 . \]

We  first observe the nonlinear term of energy $E$ is bounded by Strichartz's estimate and Gagliardo-Nirenberg's inequality as follows,
\begin{equation}
\int_{\R^3}\int_0^{2\pi} |e^{-i\theta\cH}\phi|^{4}d\theta d\bfx \lesssim \|\phi\|_{L^2}^3\| \partial_z \phi \|_{L^2}. 
\end{equation}
Then, for some $C_*=C_*(\sigma)>0$ one has
\[ B_1[\phi(t)]+B_2[\phi(t)]-C_*M[\psi_0]^{\f{3}{2}}B_1[\phi(t)]^{\h}\le E[\psi_0]\]
where, 
\[  B_1[\phi]:= \h \| \partial_z \phi \|_{L^2}^2 \qquad   B_2[\phi] := \int _{\R^3} V(z) |\phi|^2 d\bfx .  \]
We estimate $B_1[\phi(t)]$. If $ B_1[\phi(t)] \le C_*^2M[\psi_0]^3$ holds, we already have the bound for $B_1[\phi(t)]$ and obtain 
\begin{equation*}
\begin{split}
\| \phi(t) \|_{L_x^2 \Sigma_z^1}^2 &\lesssim B_1[\phi(t)] + B_2[\phi(t)] +cM[\phi(t)]+c\| z \phi(t)\|_{L^2}^2  \\
&\le  E[\psi_0] + C_*M[\psi_0]^{\f{3}{2}}B_1[\phi(t)]^{\h} +cM[\psi_0]+c\| z \phi(t)\|_{L^2}^2 \\
&\le E[\psi_0] +  C_*^2M[\psi_0]^3 +cM[\psi_0]+c\| z \phi(t)\|_{L^2}^2.
\end{split}
\end{equation*}
Otherwise, 
\begin{equation*}
\begin{split}
B_1[\phi(t)] &\le  4(B_1[\phi(t)]^\h - \f{ C_*M[\psi_0]^{\f{3}{2}}}{2})^2\\
&\le 4\big(B_1[\phi(t)]-C_*M[\psi_0]^{\f{3}{2}}B_1[\phi(t)]^{\h} \big) + C_*^2M[\psi_0]^3\\
&\le4(E[\psi_0] -B_2[\phi(t)] )  +C_*^2M[\psi_0]^3,
\end{split}
\end{equation*}
that is 
\begin{equation*}
\begin{split}
B_1[\phi(t)]+4B_2[\phi(t)]\le4E[\psi_0] +C_*^2 M[\psi_0]^3.
\end{split}
\end{equation*}
Thus, we have
\begin{equation*}
\begin{split}
\| \phi(t) \|_{L_x^2 \Sigma_z^1}^2 &\lesssim B_1[\phi(t)] + 4(B_2[\phi(t)] + cM[\phi(t)]+c\| z \phi(t)\|_{L^2}^2)  \\
&\le 4E[\psi_0] +  C_*^2M[\psi_0]^3 + 4cM[\psi_0]+4c\| z \phi(t)\|_{L^2}^2.
\end{split}
\end{equation*}
If $\lmd=+1$, it holds  
\begin{equation*}
\begin{split}
\| \phi(t) \|_{L_x^2 \Sigma_z^1}^2 \lesssim E[\psi_0] + c M[\psi_0]+ c \| z \phi(t)\|_{L^2}^2. 
\end{split}
\end{equation*}
The remainder of the proof is similar to  the argument presented in (\ref{estz}) and (\ref{estSigma}). Finally, we have
\begin{equation*}
\| z\phi(t) \|_{L^2} \le \big( C(M[\psi_0],E[\psi_0],c) + \|z \psi_0 \|_{L^2}^2 \big)^\h e^{Cc|t|},
\end{equation*}
which implies the solution is global.\\

To obtain persistence of regularity, we need
\begin{equation}\label{perlmt1}
 \| F_{\text{av}}(\phi) \|_{\Sigma^k} \lesssim \| \phi \|_{\Sigma^k} \| \phi \|_{L_x^2\Sigma_z^1}  \|\phi\|_{L^2}. 
 \end{equation}
This follows from (\ref{eqH}), same calculation as that performed in  (\ref{reg1}), and interpolation. 
\end{proof}

\subsection{The case $\sigma=2$}

\begin{thm}\label{wplmt2}
Let $\sigma=2$ and $V$ satisfy \eqref{asV}. For any data  $\psi_0 \in  \Sigma^1$, there exists $T>0$ and  a unique solution to (\ref{lmt})  $ \phi\in C([0,T],\Sigma^1)$, depending continuously on $\psi_0$.  If $\lmd=+1$, the solution is global.
\end{thm}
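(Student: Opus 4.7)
The plan is a standard Picard iteration on the Duhamel formulation
\[
\phi(t) = e^{-it\cH_z}\psi_0 - i\lmd \int_0^t e^{-i(t-s)\cH_z}\Fav(\phi(s))\,ds,
\]
patterned on the $\sigma=2$ proof of Theorem~\ref{wpep2}. The quintic power in $\R^3$ forces us to use \emph{two} sources of dispersion simultaneously: the 1D Strichartz estimates for $\{e^{-it\cH_z}\}_{t\in\R}$ in the $z$ variable (admissible pair $(p,q)=(6,6)$, giving a $L^6_tL^6_z$ Strichartz norm on the resolution space) and the 2D Strichartz estimates for $\{e^{i\theta\cH}\}_{\theta\in\R}$ in the $x$ variable (admissible pair $(p,q)=(6,3)$) hidden inside the nonlinearity via the definition of $\Fav$. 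I therefore set up the contraction on a ball
\[
M(a,b,T)=\bigl\{\phi\in L^\infty_t\Sigma^1\cap L^6_tL^6_z\Sigma^1_x([0,T]) : \|\phi\|_{L^\infty_t\Sigma^1}\leq a,\ \|\phi\|_{L^6_tL^6_z\Sigma^1_x}\leq b\bigr\}.
\]

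\textbf{Nonlinear estimate.} The core step is a dual-Strichartz bound of the form
\[
\|\Fav(\phi)\|_{L^{6/5}_tL^{6/5}_z\Sigma^1_x}\lesssim \|\phi\|_{L^\infty_t\Sigma^1}^3\|\phi\|_{L^6_tL^6_z\Sigma^1_x}^2\, T^\gamma
\]
with some $\gamma>0$. To prove it, I use the commutativity $[\cH_0,\cH]=0$ together with the norm equivalence (\ref{eqH}) so that $\cH_0^{1/2}$ passes freely through $e^{i\theta\cH}$ and encodes the $\Sigma^1_x$ regularity of $\phi$ via $\|\cH_0^{1/2}e^{-i\theta\cH}\phi\|_{L^2_x}=\|\cH_0^{1/2}\phi\|_{L^2_x}$. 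The $\partial_z$ and $z$ weights commute with $e^{i\theta\cH}$ directly, so I handle them as in (\ref{reg1}) of Theorem~\ref{wplmt1}, using Lemma~\ref{sigmaz} on the $\cH_z$ side. After Leibniz, the dual 2D Strichartz estimate in $\theta$ absorbs the $L^{6/5}_x$ norm on $|e^{-i\theta\cH}\phi|^4 e^{-i\theta\cH}\phi$; the resulting $L^{10}_z$-type norms on the five factors are handled by the 1D Gagliardo–Nirenberg inequality $\|u\|_{L^{10}_z}\lesssim \|u\|_{L^2_z}^{3/5}\|\partial_z u\|_{L^2_z}^{2/5}$, and a final application of Hölder in $t$ and $x$ distributes the five copies between the $L^\infty_t\Sigma^1$ and $L^6_tL^6_z\Sigma^1_x$ norms. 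The contraction property, uniqueness, and continuous dependence then follow from the analogous difference estimate; persistence of regularity is obtained by the same scheme as in Theorems~\ref{wpep1} and~\ref{wplmt1}.

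\textbf{Globalisation for $\lmd=+1$.} For the defocusing case I plan to derive an a-priori $\Sigma^1$ bound from the three available conservation laws: mass $M$, the ``$x$-kinetic'' quantity $A[\phi]=\langle\cH_0\phi,\phi\rangle$ conserved by the lemma just proved, and the energy $E[\phi]$. When $\lmd=+1$, conservation of $A$ immediately controls $\|\nabla_x\phi(t)\|_{L^2}+\||x|\phi(t)\|_{L^2}$, while $E[\phi]$ combined with the sub-quadratic assumption (\ref{V}) on $V$ controls $\|\partial_z\phi(t)\|_{L^2}$ modulo $\|z\phi(t)\|_{L^2}$, using $V(z)+c(1+z^2)\simeq\langle z\rangle^2\geq 0$. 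The remaining weight $\|z\phi(t)\|_{L^2}$ is then pinned down by the commutator computation and Gronwall argument identical to (\ref{estz})--(\ref{estSigma}), yielding $\|\phi(t)\|_{\Sigma^1}\leq C(\psi_0)e^{Cc|t|}$ and hence $\Tmax=+\infty$ via the blow-up alternative.

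\textbf{Main obstacle.} The principal technical hurdle is making the quintic estimate close with a strictly positive power of $T$. Pure Sobolev embedding into $L^\infty(\R^3)$ is too weak to absorb $|\phi|^4\phi$ from $L^\infty_t\Sigma^1$ alone; it is exactly the \emph{coupling} of the internal $\theta$-Strichartz (available only through the $\Fav$ averaging, which is why this proof differs from that of Theorem~\ref{wplmt1}) with the external $(t,z)$-Strichartz that pushes the quintic through. The non-commutation of $\nabla_x,\,x$ with $\cH$ is the other subtlety, and it is handled cleanly only because $\cH_0$ does commute with $\cH$, so (\ref{eqH}) lets us transfer $\Sigma^1_x$ regularity onto $\cH_0^{1/2}$-bounds that survive conjugation by $e^{i\theta\cH}$.
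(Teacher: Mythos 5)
Your proposal is correct in outline and would close, but it takes a genuinely different and heavier route for the local theory than the paper does. You set up the contraction in a two-norm ball $L_t^\infty\Sigma^1\cap L_t^6L_z^6\Sigma_x^1$ and couple the external $(t,z)$-Strichartz estimates for $e^{-it\cH_z}$ (dual pair $(6/5,6/5)$) with the internal $\theta$-Strichartz for $e^{i\theta\cH}$; the paper instead proves the \emph{pointwise-in-time} bound $\|\Fav(\phi(t))\|_{\Sigma^1}\lesssim\|\phi(t)\|_{\Sigma^1}^5$, using only the internal $\theta$-Strichartz (the dual pair $(6/5,3/2)$ in 2D applied to $\cH_0^{1/2}(|e^{-i\theta\cH}\phi|^4e^{-i\theta\cH}\phi)$), the 1D Gagliardo--Nirenberg inequality $\|u\|_{L_z^\infty}\lesssim\|\partial_z u\|_{L_z^2}^{1/4}\|u\|_{L_z^6}^{3/4}$, and the 3D Sobolev embedding $\Sigma^1\hookrightarrow L^6(\R^3)$, and then runs a plain fixed point in $C([0,T],\Sigma^1)$, the contraction factor being simply the length $T$ of the time interval. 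This means your stated ``main obstacle'' is a misdiagnosis: the quintic \emph{can} be absorbed from $L_t^\infty\Sigma^1$ alone, precisely because the averaging inside $\Fav$ supplies a full 2D Strichartz gain in $\theta$ at each fixed $t$ --- this smoothing of $\Fav$ relative to the pointwise nonlinearity $|\psi|^4\psi$ is exactly what distinguishes (L-NLS) from ($\ep$-NLS), where the paper does need the auxiliary $L_t^3\Sigma_{x,z}^{6,2}$ norm for $\sigma=2$. Your route buys nothing here (external dispersion in $z$ only becomes necessary at $\sigma=3,4$, where the paper introduces $L_t^{12}\Sigma_{z,x}^{3,2}$ and $L_t^4\Sigma_{z,x}^{\infty,2}$), and it adds bookkeeping: the Minkowski interchanges between the $\theta$-integral and the outer $L_t^{6/5}L_z^{6/5}$ norms, and the treatment of $\partial_z$ and $z$ against $e^{-it\cH_z}$ (which do not commute because of $V$) inside a genuine dual Strichartz norm, are fiddlier than the paper's crude $L_t^1\Sigma^1$ bound via Lemma~\ref{sigmaz}. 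Your globalisation for $\lmd=+1$ --- conservation of $A[\phi]$ for the $x$-part, energy plus sub-quadraticity of $V$ for $\partial_z$ modulo $\|z\phi\|_{L^2}$, and the Gronwall argument of (\ref{estz})--(\ref{estSigma}) for the remaining weight --- coincides with the paper's.
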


Basically, the proof is the same as that for the case $\sigma=1$. We bound the nonlinear term of $\Phi[\phi]$ as follows.
For the $x$-derivatives and weights, applying (\ref{eqH}), Strichartz's estimate, Minkowski's inequality, and Gagliardo-Nirenberg's inequality, we have
\begin{equation*}
\begin{split}
\| \cH_0^\h \Fav(\phi(t)) \|_{L_{\bfx}^2} &\lesssim \| \cH_0^{\h}(|e^{-i\theta\cH}\phi(t)|^4e^{-i\theta\cH}\phi(t)) \|_{L_z^2L_{\theta}^{3/2}L_x^{6/5}(\R\times[0,2\pi]\times \R^2)}\\
&\lesssim \Big\| \| \cH_0^{\h}e^{-i\theta\cH}\phi(t) \|_{L_x^6} \| e^{-i\theta\cH}\phi(t)\|_{L_x^6}^4 \Big\|_{L_{\theta}^{3/2}L_z^2}\\
&\lesssim \| e^{-i\theta\cH} \cH_0^{\h}\phi(t)\|_{L_{\theta}^3L_z^2L_x^6} \|e^{-i\theta\cH}\phi(t) \|_{L_{\theta}^{12}L_x^{6}L_z^{\infty}}^4\\
&\lesssim \| e^{-i\theta\cH} \cH_0^{\h}\phi(t)\|_{L_z^2L_{\theta}^3L_x^6} \|\partial_z e^{-i\theta\cH}\phi(t) \|_{L_{\theta}^{3}L_x^6L_z^2}\|e^{-i\theta\cH}\phi(t) \|_{L_{\theta}^{\infty}L_{x,z}^6}^3\\
\end{split}
\end{equation*}
 For the first two elements, using Minkowski and Strichartz for $L_\theta^3L_x^6$, and for the last element, using Sobolev's embedding $ \Sigma^1 \hookrightarrow L_\bfx^6$, we obtain
\begin{equation*}
\| \cH_0^{\h} \Fav(\phi(t)) \|_{L_{\bfx}^2} \lesssim \|\phi(t)\|_{\Sigma^1}^5 .
\end{equation*}
Estimating in the same manner for the $z$-derivative and corresponding weight, we have
 \begin{equation*}
\| \Fav(\phi(t)) \|_{\Sigma^1} \lesssim \|\phi(t)\|_{\Sigma^1}^5 .
\end{equation*}
By interpolation, for any integer $k\in\N$ we also have
\begin{equation}\label{Favbdd2}
\| \Fav(\phi(t)) \|_{\Sigma^k} \lesssim \|\phi(t)\|_{\Sigma^k} \|\phi(t)\|_{\Sigma^1}^4.
\end{equation}

Considering the bound of $\|\phi(t)\|_{\Sigma^1}$ in the case $\lmd=+1$,   the $x$-derivatives and corresponding weights of $\phi(t)$ are controlled by $K[\phi]$ expressed in \eqref{ke}, and the $z$-derivative and weight can be treated  as the case  $\sigma=1$. \\

\subsection{The case $\sigma=3$}

\begin{thm}
Let $\sigma=3$ and $V$ satisfy \eqref{asV}. For any data  $\psi_0 \in  \Sigma^1$, there exists $T>0$ and  a unique solution to (\ref{lmt})  $ \phi\in C([0,T],\Sigma^1)\cap L^{12}([0,T], \Sigma_{z,x}^{3,2})$, depending continuously on $\psi_0$.  If $\lmd=+1$, the solution is global.

\end{thm}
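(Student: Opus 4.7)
The plan is to mirror the Duhamel-and-contraction strategy of Theorems \ref{wplmt1} and \ref{wplmt2}, now working in the mixed space $L^\infty([0,T],\Sigma^1) \cap L^{12}([0,T],\Sigma_{z,x}^{3,2})$. First I recast (\ref{lmt}) as
\[
\phi(t) = \Phi[\phi](t) := e^{-it\cH_z}\psi_0 - i\lmd \int_0^t e^{-i(t-s)\cH_z} \Fav(\phi(s))\,ds,
\]
and define $M(a,b,T)$ as the closed set with $\|\phi\|_{L^\infty_t \Sigma^1([0,T])} \le a$ and $\|\phi\|_{L^{12}_t \Sigma_{z,x}^{3,2}([0,T])} \le b$. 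The choice of the auxiliary Strichartz norm is forced by the septic power: $(p,q)=(3,12)$ satisfies $2/q+1/p=1/2$, so it is a $1$-dimensional admissible pair for $\{e^{it\cH_z}\}_{t\in\R}$, and Strichartz yields $\|e^{-it\cH_z}\psi_0\|_{L^{12}_t L^3_z L^2_x}\lesssim \|\psi_0\|_{L^2_\bfx}$ together with its dual inhomogeneous form.

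For the key septic estimate I distribute the seven factors in $|e^{-i\theta\cH}\phi|^6 e^{-i\theta\cH}\phi$ by H\"older, using that $\partial_z$, $z$, and $\cH_0^{1/2}$ all commute with $\cH$, together with the norm equivalence (\ref{eqH}) to trade $\cH_0^{1/2}$ for $\partial_x$ and $\langle x\rangle$. One factor carries the derivative or weight and is placed in $L^{12}_t L^3_z L^2_x$ (bounded by $b$); the remaining six go into a mixed $L^r_t L^\infty_z L^s_x$ norm, controlled by $\|\phi\|_{L^\infty_t \Sigma^1}$ via the $1$D Sobolev embedding $H^1_z\hookrightarrow L^\infty_z$ in $z$, and via $2$D Strichartz for $\{e^{i\theta\cH}\}$ on $[0,2\pi]$ combined with Sobolev in $x$ to absorb $\cH_0^{1/2}$. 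Tuning the time exponents so that a positive power $T^\gamma$ emerges, this yields
\[
\|\Fav(\phi)\|_{L^{12/11}_t L^{3/2}_z L^2_x\,\cap\, L^1_t \Sigma^1} \lesssim T^\gamma a^6 b,
\]
and the analogous bound for differences. Choosing $a=2C_0\|\psi_0\|_{\Sigma^1}$, $b$ twice the Strichartz norm of the free evolution, and $T$ small enough that $C T^\gamma a^6 \le \tfrac12$, gives the self-map and contraction; uniqueness, continuous dependence, and persistence of regularity follow as in (\ref{perlmt1}) and (\ref{Favbdd2}).

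For global existence when $\lmd=+1$, I combine the three conservation laws. Conservation of $A[\phi]$ controls $\|\nabla_x\phi(t)\|_{L^2}$ and $\||x|\phi(t)\|_{L^2}$ a priori; in the defocusing case the nonlinear part of $E[\phi]$ is non-negative, so together with mass conservation and the sub-quadratic bound on $V$ the remaining part of $E[\phi]$ bounds $\|\partial_z\phi(t)\|_{L^2}$ modulo $\|z\phi(t)\|_{L^2}^2$, which grows at most like $e^{C|t|}$ by the Gronwall computation of (\ref{estz})--(\ref{estSigma}). The hard part is the septic nonlinear estimate itself: since $\sigma=3$ is energy-supercritical for the full $3$D Laplacian, a naive $\Sigma^1$ bound cannot close, and the argument survives only because (\ref{lmt}) has no $x$-dispersion in the free flow, so the problem is effectively $1$-dimensional in $z$ with $x$ acting as a parameter through the oscillatory $\theta$-average, and because $H^1_z\hookrightarrow L^\infty_z$ together with the auxiliary pair $(3,12)$ provides just enough smoothing to make the seven-linear estimate subcritical.
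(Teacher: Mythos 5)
Your skeleton agrees with the paper's: Duhamel plus contraction in $L^\infty_t\Sigma^1\cap L^{12}_t\Sigma_{z,x}^{3,2}$, the $1$D admissible pair $(3,12)$ for $\{e^{it\cH_z}\}$, and globality for $\lmd=+1$ from conservation of $A[\phi]$, the sign of the nonlinear part of $E$, and the Gronwall bound on $\|z\phi\|_{L^2}$. The gap is in the central septic estimate. You claim $\|\Fav(\phi)\|\lesssim T^\gamma a^6 b$ with \emph{one} factor in the auxiliary norm and the six undifferentiated factors controlled by $\|\phi\|_{L^\infty_t\Sigma^1}$ via $H^1_z\hookrightarrow L^\infty_z$. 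This does not close. First, the $z$-exponents do not add up: a factor in $L^3_z L^2_x$ times six factors in $L^\infty_z$ lands in $L^3_z$, not in the $L^2_z$ (for the $\Sigma^1$ bound) or $L^{3/2}_z$ (dual of the $z$-Strichartz pair) that you need. Second, and more seriously, each $L^\infty_z$ factor costs $\|\partial_z(e^{-i\theta\cH}\phi)\|_{L^{p}_xL^2_z}^{1/2}$ with $p>2$ after the $x$-H\"older; since $\Sigma^1$ contains no mixed derivative $\nabla_x\partial_z\phi$, this is only reachable through Strichartz for $\{e^{i\theta\cH}\}$, which charges roughly $1/4$ of $\theta$-integrability per factor. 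Six factors cost about $3/2$, while H\"older over the compact interval $\theta\in[0,2\pi]$ only allows a total of $1$ (and $L^q_\theta\hookrightarrow L^{q'}_\theta$ goes the wrong way). So the six undifferentiated factors cannot all be paid for out of $\Sigma^1$; the estimate is not ``subcritical in $a$'' the way you assert.

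The paper resolves this by spending the auxiliary norm on essentially all seven factors: each undifferentiated factor is bounded by $\|e^{-i\theta\cH}\phi\|_{L_\theta^{36/5}L_\bfx^{36}}\lesssim\|\nabla_\bfx e^{-i\theta\cH}\phi\|_{L^6_\theta L^3_\bfx}^{5/6}\|e^{-i\theta\cH}\phi\|_{L^\infty_\theta L^6_\bfx}^{1/6}\lesssim\|\phi\|_{\Sigma_{z,x}^{3,2}}^{5/6}\|\phi\|_{\Sigma^1}^{1/6}$ (3D Gagliardo--Nirenberg plus the $2$D pair $(3,6)$ in $\theta$), and the derivative factor by $\|\phi\|_{\Sigma_{z,x}^{3,2}}$, giving $\|\cH_0^{1/2}\Fav(\phi)\|_{L^1_tL^2_\bfx}\lesssim T^{1/2}\,\|\phi\|_{L^{12}_t\Sigma_{z,x}^{3,2}}^{6}\|\phi\|_{L^\infty_t\Sigma^1}$, i.e.\ $T^{1/2}b^6a$ rather than $T^\gamma a^6b$; note the $\theta$-budget then closes exactly, $1/6+6\cdot 5/36=1$. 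A second point you gloss over: $\partial_z$ and $z$ do \emph{not} commute with $e^{-it\cH_z}$, and Lemma \ref{sigmaz} is an $L^2_z$-based estimate unavailable once the $L^3_z$ auxiliary norm enters; the paper must differentiate the Duhamel formula and absorb the commutator terms $V'(z)\Phi$ and $\partial_z\Phi$ by taking $T$ small, as in (\ref{lmt2-2})--(\ref{lmt2-3}). Your closing heuristic that $H^1_z\hookrightarrow L^\infty_z$ ``provides just enough smoothing'' is therefore not the mechanism that makes $\sigma=3$ work.
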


\noindent
When $\sigma=3$, we need the $L_z^p$ norm for $p>2$, thus, we cannot use Lemma \ref{sigmaz}.

\begin{proof}
For $a>0$ and $0< T\le 1$, we define
\[ M(a,T):=\{ \phi \in L^\infty([0,T],\Sigma^1)\cap L^{12}([0,T],\Sigma_{z,x}^{3,2}) : \| \phi \|_{L_t^\infty\Sigma^1\cap L_t^{12}\Sigma_{z,x}^{3,2}([0,T])} \le a\} \]
and $\Phi$ as (\ref{intlmt}). We first bound $x$-derivatives and weights of $\Phi[\phi]$ as
\begin{equation*}
\begin{split}
 \| \nabla_x \Phi[\phi]\|_{L^\infty_t L_\bfx^2\cap L_t^{12}L_z^3 L_x^2([0,T])}+ \| x \Phi[\phi]\|_{L^\infty_t L_\bfx^2\cap L_t^{12}L_z^3 L_x^2([0,T])} \\
 \le C_0\|\phi_0\|_{\Sigma^1}+C\|\cH_0^{\h}\Fav(\phi)\|_{L_t^1 L_\bfx^2([0,T])} 
\end{split}
\end{equation*}
and 
\begin{equation}\label{31}
\begin{split}
\|\cH_0^\h\Fav(\phi(t))\|_{L_\bfx^2}&\lesssim \| \cH_0^{\h}(|e^{-i\theta\cH}\phi(t)|^6e^{-i\theta\cH}\phi(t)) \|_{L_{\theta}^1L_\bfx^2([0,2\pi]\times\R^3)}\\
&\lesssim \| \cH_0^{\h}e^{-i\theta\cH}\phi(t)\|_{L_{\theta}^6L_\bfx^3} \|e^{-i\theta\cH}\phi(t) \|_{L_\theta^{\f{36}{5}} L_\bfx^{36}}^6\\
&\lesssim \| \phi(t)\|_{\Sigma_{z,x}^{3,2}}  \|e^{-i\theta\cH}\phi(t) \|_{L_\theta^{\f{36}{5}} L_\bfx^{36}}^6\\
\end{split}
\end{equation}
We focus on the latter factor. Then, we have
\begin{equation}\label{32}
\begin{split}
\|e^{-i\theta\cH}\phi(t) \|_{L_\theta^{\f{36}{5}}L_\bfx^{36}}&\lesssim  \Big\| \| \nabla_\bfx e^{-i\theta\cH}\phi(t) \|_{L_\bfx^3}^{\f{5}{6}} \|e^{-i\theta\cH}\phi(t) \|_{L_\bfx^6}^{\f{1}{6}} \Big\|_{L_\theta^{\f{36}{5}}}\\
&\lesssim \|  e^{-i\theta\cH} \cH_0^\h \phi(t) \|_{L_\theta^6 L_\bfx^3}^{\f{5}{6}}  \| e^{-i\theta\cH}\phi(t) \|_{L_\theta^\infty L_\bfx^6}^{\f{1}{6}}\\
&\lesssim \| e^{-i\theta\cH}\cH_0^\h \phi(t) \|_{L_\theta^6 L_\bfx^3}^{\f{5}{6}} \| \nabla_\bfx e^{-i\theta\cH}\phi(t) \|_{L_\theta^\infty L_\bfx^2}^{\f{1}{6}}\\
&\lesssim \| \phi(t)\|_{\Sigma_{z,x}^{3,2}}^{\f{5}{6}} \|\phi(t)\|_{\Sigma^1}^{\f{1}{6}}.
\end{split}
\end{equation}
From these estimates,
\begin{equation*}
\begin{split}
\|\cH_0^{\h}\Fav(\phi)\|_{L_t^1 L_\bfx^2([0,T])}&\lesssim \| \phi\|_{L_t^{12}\Sigma_{z,x}^{3,2}([0,T])}^6 \|\phi\|_{L_t^\infty \Sigma^1([0,T])}T^\h.
\end{split}
\end{equation*}
For bounding $\partial_z \Phi[\phi]$ and $z\Phi[\phi]$, we first differentiate $\Phi[\phi(t)]$ with respect to $t$: 
\[ i\partial_t \Phi[\phi(t)]= \cH_z \Phi[\phi(t)] +\lmd \Fav(\phi(t))   \qquad \Phi[\phi(0)]=\psi_0. \]
By differentiating both sides with respect to $z$ or multiplying by $z$, and applying Duhamel's formula again, we obtain
\begin{equation}\label{lmt2-2}
\partial_z \Phi[\phi(t)]=e^{-it\cH_z}\partial_z \psi_0-i\int_0^t e^{-i(t-s)\cH_z} [\lmd\partial_z  F_{\text{av}}(\phi(s))+V'(z)\Phi[\phi(s)]]ds
\end{equation}
\begin{equation}\label{lmt2-3}
z\Phi[\phi(t)]=e^{-it\cH_z}z\psi_0-i\int_0^t e^{-i(t-s)\cH_z} [\lmd z  F_{\text{av}}(\phi(s))+\partial_z \Phi[\phi(s)]]ds.
\end{equation}
We use (\ref{lmt2-2}) and (\ref{lmt2-3}) to obtain
\begin{equation*}
\begin{split}
\|\partial_z \Phi[\phi]\|_{L_t^\infty L_\bfx^2 \cap L_t^{12} L_z^3L_x^2} &\le C_0\|\psi_0\|_{\Sigma^1}+C\|\partial_z \Fav(\phi)\|_{L_t^1 L_\bfx^2} +C\| V'(z)\Phi[\phi]\|_{L_t^1 L_\bfx^2} \\
 &\le C_0\|\psi_0\|_{\Sigma^1}+C \| \phi\|_{L_t^{12}\Sigma_{z,x}^{3,2}}^6 \|\phi\|_{L_t^\infty \Sigma^1}T^\h  +C\| \langle z \rangle \Phi[\phi]\|_{L_t^\infty L_\bfx^2}T
\end{split}
\end{equation*}
and
\begin{equation*}
\begin{split}
\|  z  \Phi[\phi]\|_{L_t^\infty L_\bfx^2 \cap L_t^{12} L_z^3L_x^2} &\le C_0\|\psi_0\|_{\Sigma^1}+C \| \phi\|_{L_t^{12}\Sigma_{z,x}^{3,2}}^6 \|\phi\|_{L_t^\infty \Sigma^1}T^\h +C\| \partial_z \Phi[\phi]\|_{L_t^\infty L_\bfx^2} T.
\end{split}
\end{equation*}
Summing up these estimates, for some $C_1\ge 1$, we have
\begin{equation*}
\begin{split}
\| \partial_z \Phi[\phi]  \|_{L_t^\infty L_\bfx^2 \cap L_t^{12} L_z^3  L_x^2}& + \|  z   \Phi[\phi]\|_{L_t^\infty L_\bfx^2 \cap L_t^{12} L_z^3L_x^2}\\
&\le 2C_0\|\psi_0\|_{\Sigma^1}+C \| \phi\|_{L_t^{12}\Sigma_{z,x}^{3,2}}^6 \|\phi\|_{L_t^\infty \Sigma^1}T^\h  \\
& \quad +C_1\big(\| \partial_z \Phi[\phi]\|_{L_t^\infty L_\bfx^2 \cap L_t^{12} L_z^3L_x^2} + \|  z  \Phi[\phi]\|_{L_t^\infty L_\bfx^2 \cap L_t^{12} L_z^3L_x^2} \big)T
\end{split}
\end{equation*}
where, note that 
\[ \| \partial_z u\|_{L_z^2}^2+\|\langle z \rangle u\|_{L_z^2}^2\simeq \|\partial_z u\|_{L_z^2}^2+\| z u\|_{L_z^2}^2. \]
Hence, if $T\le (2C_1)^{-1}$, we obtain 
\begin{equation*}
\begin{split}
\| \partial_z \Phi[\phi]\|_{L_t^\infty L_\bfx^2 \cap L_t^{12} L_z^3L_x^2} + \|  z   \Phi[\phi]\|_{L_t^\infty L_\bfx^2 \cap L_t^{12} L_z^3L_x^2} \le 4C_0\|\psi_0\|_{\Sigma^1}+C \| \phi\|_{L_t^{12}\Sigma_{z,x}^{3,2}}^6 \|\phi\|_{L_t^\infty \Sigma^1}T^\h .\\
\end{split}
\end{equation*}
Finally, for some $C_2\ge1$, we have
\begin{equation*}
\begin{split}
\|\Phi[\phi]\|_{L_t^\infty \Sigma^1\cap  L^\infty\Sigma_{z,x}^{3,2}([0,T])}&\le C_2 \|\phi_0\|_{\Sigma^1}+C\| \phi\|_{L_t^{12}\Sigma_{z,x}^{3,2}([0,T])}^6 \|\phi\|_{L_t^\infty \Sigma^1([0,T])}T^\h .\\
\end{split}
\end{equation*}
By choosing $a=2C_2\|\psi_0\|_{\Sigma^1}$ and $T\le \min\{ (2Ca^6)^{-2},(2C_1)^{-1} \}$, $\Phi$ is on  $M(a,T)$.
The remainder of the proof (the case $\lmd=+1$) is the same as that for the cases $\sigma=1,2$.\\

To obtain the  persistence of regularity, in (\ref{31}) and (\ref{32}) we replace $\cH_0$ by 3-dimensional harmonic oscillator $\cH_{0,\bfx}=-\h\Delta_\bfx + \f{1}{8}|\bfx|^2$ and apply (\ref{eqH}) in all 3 dimensions. Then, for any integer $k\in \N$, we have
\begin{equation*}
\begin{split}
\|\Fav(\phi)\|_{L_t^1 \Sigma^k([0,T])}&\lesssim \| \phi\|_{L_t^{12}\Sigma_{z,x}^{3,2,k}([0,T])}\| \phi\|_{L_t^{12}\Sigma_{z,x}^{3,2}([0,T])}^5 \|\phi\|_{L_t^\infty \Sigma^1([0,T])}T^\h.
\end{split}
\end{equation*}

\end{proof}

\subsection{The case $\sigma=4$ (energy-critical) }
We denote
\[ \|u\|_{\dot{X}_x([-T,T])}:= \|\nabla_x u\|_{L_t^\infty L_\bfx^2 \cap  L_t^{4}L_z^{\infty} L_x^2([-T,T]\times\R^3)}+\||x|u\|_{L_t^\infty L_\bfx^2 \cap L_t^{4} L_z^{\infty} L_x^2([-T,T]\times\R^3)} ,\]

\[ \|u\|_{\dot{X}_z([-T,T])}:= \|\partial_z u\|_{L_t^\infty L_\bfx^2\cap  L_t^{4}L_z^{\infty} L_x^2([-T,T]\times\R^3)}+\|zu\|_{L_t^\infty L_\bfx^2 \cap L_t^{4} L_z^{\infty} L_x^2([-T,T]\times\R^3)} .\]
\begin{thm}\label{wplmt4}
Let $\sigma=4$ and $V$ satisfy \eqref{asV}. There exist absolute constants $\al>0$ and $T_0\in (0,1]$ such that the following holds. \\

If $\psi_0\in \Sigma^1$ and  $T_1>0$ satisfy
\begin{equation}\label{ec}
\begin{split}
 \|e^{-it\cH_z} \psi_0\|_{L_t^4L_z^\infty \Sigma_x^1 ([-T_1,T_1])}^{\f{1}{4}} \|\psi_0\|_{ L_z^2\Sigma_x^1 }^{\f{1}{2}} \| \psi_0\|_{L_x^2\Sigma_z^1}^{\f{1}{4}} \le \al,
\end{split}
\end{equation}
then for $T\le \min\{T_0, T_1\}$, (\ref{lmt}) has a unique solution $\phi \in C_t\Sigma^1\cap L_t^4  \Sigma_{z,x}^{\infty,2}([-T,T]\times\R^3)$, depending continuously on $\psi_0$.\\

\end{thm}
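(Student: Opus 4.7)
I would run a fixed-point/contraction argument on the Duhamel identity
\[\Phi[\phi](t) = e^{-it\cH_z}\psi_0 - i\lmd\int_0^t e^{-i(t-s)\cH_z}\Fav(\phi(s))\,ds\]
in a ball of the mixed Strichartz space $L_t^\infty\Sigma^1 \cap L_t^4\Sigma_{z,x}^{\infty,2}([-T,T])$, with radius $R$ roughly equal to $\|\psi_0\|_{\Sigma^1}+\|e^{-it\cH_z}\psi_0\|_{L_t^4L_z^\infty\Sigma_x^1}$. Because $\sigma=4$ is energy-critical, smallness cannot come from shrinking $T$; it must come from hypothesis (\ref{ec}), and the role of $T\le T_0$ is only to absorb off-critical error terms ($V'(z)$ contributions and $[\cH_z,z]$-commutators in the $z$-direction estimates).

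\textbf{Key nonlinear estimate.} The heart of the proof is an inequality of the form
\[\|\Fav(\phi)\|_{L_t^{4/3}L_z^1\Sigma_x^1([-T,T])} \lesssim \|\phi\|_{L_t^4 L_z^\infty\Sigma_x^1}^{2}\,\|\phi\|_{L_t^\infty L_z^2\Sigma_x^1}^{4}\,\|\phi\|_{L_t^\infty L_x^2\Sigma_z^1}^{2}\cdot\|\phi\|_{L_t^4 L_z^\infty\Sigma_x^1},\]
together with its $z$-derivative/$z$-weight analogue. The output norm $L_t^{4/3}L_z^1$ is chosen to be dual to the 1D-Strichartz pair $(L_t^4,L_z^\infty)$ for $e^{-it\cH_z}$. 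To produce it, I would:
\begin{enumerate}
\item apply Strichartz's estimate for $\{e^{i\theta\cH}\}_{\theta\in\R}$ on $[0,2\pi]$ (treating the periodic variable $\theta$ as 2D time) \emph{inside} the definition of $\Fav$, turning the $\theta$-integral into a Strichartz-dual norm $L_\theta^{q'}L_x^{p'}$ on $|e^{-i\theta\cH}\phi|^8\,e^{-i\theta\cH}\phi$ for some 2D-admissible $(p,q)$;
\item use the norm equivalence (\ref{eqH}) and commutativity of $\cH_0$ with $e^{i\theta\cH}$ to move the $x$-derivative or $x$-weight onto a single factor of the nine-factor product;
\item distribute the remaining eight factors via H\"older's inequality in $(t,\theta,x,z)$, using Sobolev's embedding $\Sigma_z^1\hookrightarrow L_z^\infty$ on two of them to generate an $L_x^2\Sigma_z^1$-norm.
\end{enumerate}
The H\"older exponents are rigged so that the eight background factors appear with powers $2,4,2$ in the three hypothesis norms, mirroring the powers $\tfrac14,\tfrac12,\tfrac14$ of (\ref{ec}) raised to the eighth power.

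\textbf{Closing the contraction.} Plugging this into the Strichartz estimate for $\{e^{-it\cH_z}\}_{t\in\R}$ applied to the Duhamel identity gives
\[\|\Phi[\phi]\|_{L_t^4L_z^\infty\Sigma_x^1} \lesssim \|e^{-it\cH_z}\psi_0\|_{L_t^4L_z^\infty\Sigma_x^1} + \al^{8}\,\|\phi\|_{L_t^4L_z^\infty\Sigma_x^1}\]
on the chosen ball, together with an analogous bound on $\|\Phi[\phi]\|_{L_t^\infty\Sigma^1}$. The $z$-derivative and $z$-weight parts of the $\Sigma^1$-norm are handled, as in the $\sigma=3$ proof of Theorem~\ref{wplmt2}, by differentiating/multiplying the Duhamel identity in $z$ (cf.~(\ref{lmt2-2})--(\ref{lmt2-3})); the ensuing $V'(z)\Phi[\phi]$ and $\partial_z\Phi[\phi]$ terms carry a factor $T$ and are absorbed provided $T\le T_0$. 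Choosing $\al$ small enough that the coefficient of $\|\phi\|$ is $<\tfrac12$ gives the self-map, a parallel difference estimate then yields the contraction, and the unique fixed point is the desired solution. Uniqueness in the class, continuous dependence on $\psi_0$, and continuity in time follow by the standard bootstrap from these same estimates.

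\textbf{Main obstacle.} The delicate point is the nine-factor H\"older bookkeeping: the splitting must simultaneously match (a) the dual-Strichartz output norm $L_t^{4/3}L_z^1L_x^{p'}$ required by the $e^{-it\cH_z}$-propagator on the left, and (b) the three hypothesis norms in (\ref{ec}) with the \emph{exact} powers $2,4,2$ on the right. A secondary subtlety is that $\Fav$ must be estimated via the 2D-Strichartz inequality for $e^{i\theta\cH}$ on the torus rather than the crude $L_\theta^1$-bound, since the latter would lose the critical scaling invariance recorded in Theorem~\ref{scatter}.
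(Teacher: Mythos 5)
Your proposal follows essentially the same route as the paper's proof: the dual Strichartz norm in $\theta$ applied inside $\Fav$, the equivalence (\ref{eqH}) to move $\cH_0^{1/2}$ onto a single factor, a H\"older/Gagliardo--Nirenberg splitting of the remaining eight factors into the three norms of (\ref{ec}) with powers $2,4,2$, and the $z$-derivative and $z$-weight handled through (\ref{lmt2-2})--(\ref{lmt2-3}) with $T\le T_0$ absorbing the $V'(z)$ and commutator terms. The one point to correct in your write-up is the iteration set: it must carry three separate bounds $\beta_1,\beta_2,\beta_3$ (one for each norm appearing in (\ref{ec}), as in the paper's $M_\al(\beta,T)$) rather than a single radius $R\sim\|\psi_0\|_{\Sigma^1}+\|e^{-it\cH_z}\psi_0\|_{L_t^4L_z^\infty\Sigma_x^1}$, since only the anisotropic product $\beta_1^2\beta_2^4\beta_3^2\lesssim\al^8$ is small under (\ref{ec}) while $R^8$ need not be.
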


\begin{proof}
Note that for any $\al>0$ and $\psi_0\in \Sigma^1$, there exists $T_1>0$ which satisfies (\ref{ec}).
Fix $\psi_0 \in \Sigma^1$.  For $\al>0$, $\beta= (\beta_1,\beta_2,\beta_3) \in \R_{>0}^3$ and $0<T\le1$ we define

\begin{equation*}
\begin{split}
 M_\al(\beta,T):=\{ \phi \in L_t^\infty\Sigma^1\cap L_t^4\Sigma_{z,x}^{\infty,2}&([-T,T]) : \|\phi \|_{L_t^4L_z^\infty \Sigma_x^1([-T,T])} \le \beta_1, \\
 &  \|\phi\|_{\Dot{X}_x([-T,T])}  \le \beta_2, \text{ } \|\phi\|_{\Dot{X}_z([-T,T])} \le \beta_3   \},
\end{split}
\end{equation*}
and $\Phi$ as (\ref{intlmt}). We prove the map $\Phi$ is on $M_\al(\beta,T)$ and it is a contraction for appropriate $\al$, $\beta$ and $T$. First, by Strichartz's estimate, we have
\begin{equation}\label{11}
\begin{split}
 \| \Phi[\phi]\|_{\Dot{X}_x([-T,T])}  \lesssim \|\psi_0\|_{L_z^2\Sigma_x^1}+\|\cH_0^{\h}\Fav(\phi)\|_{L_x^2 L_t^{\f{4}{3}} L_z^1} .
\end{split}
\end{equation}
On the nonlinear term, we use Minkowski's inequality, Strichartz's estimate, and H\"{o}lder's inequality: 
\begin{equation}\label{12}
\begin{split}
 \|\cH_0^\h \Fav(\phi) \|_{L_t^{\f{4}{3}}L_z^1L_x^2} &\le \Big\| \|\cH_0^\h(|e^{-i\theta\cH}\phi|^8 e^{-i\theta\cH}\phi)\|_{L_{\theta,x}^{\f{4}{3}}} \Big\|_{L_t^{1}L_z^2}\\
&\lesssim  \Big\| \|\cH_0^\h e^{-i\theta\cH}\phi \|_{L_{\theta,x}^4} \|(e^{-i\theta\cH}\phi)^8\|_{L_{\theta,x}^2} \Big\|_{L_t^{1}L_z^2}\\
&\lesssim \|\cH_0^\h \phi\|_{L_t^{4} L_z^\infty L_{x}^2} \|e^{-i\theta\cH}\phi\|_{L_t^{\f{32}{3}}L_z^{16}L_{\theta}^{16} L_x^{16}}^8 .\\
\end{split}
\end{equation}
By Gagliardo-Nirenberg's, H\"{o}lder's, and Minkowski's inequalities, and Strichartz's estimate, 
\begin{equation}\label{13}
\begin{split}
 \| e^{-i\theta\cH}\phi\|_{L_t^{\f{32}{3}}L_z^{16} L_{\theta}^{16} L_x^{16}} &\lesssim \Big\| \|\nabla_x e^{-i\theta\cH}\phi\|_{L_x^2}^{\f{1}{4}} \| e^{-i\theta\cH}\phi \|_{L_x^{12}}^{\f{3}{4}} \Big\|_{L_t^{\f{32}{3}} L_z^{16} L_\theta^{16}}\\
 &\lesssim \| \cH_0^\h e^{-i\theta\cH}\phi\|_{L_t^{4}L_z^{\infty} L_{\theta}^{\infty}L_x^{2}}^{\f{1}{4}}  \| e^{-i\theta\cH}\phi \|_{L_{t}^{24} L_\theta^{12} L_\bfx^{12}}^{\f{3}{4}} \\
 &\lesssim \| \cH_0^\h \phi\|_{L_t^{4}L_z^\infty L_x^2}^{\f{1}{4}}  \| \nabla_x e^{-i\theta\cH}\phi \|_{L_{t}^{24} L_\theta^{12} L_\bfx^{\f{12}{5}} }^{\f{1}{2}} \| \partial_z e^{-i\theta\cH}\phi \|_{L_{t}^{24} L_\theta^{12} L_\bfx^{\f{12}{5}} }^{\f{1}{4}}  \\
 &\lesssim \| \phi\|_{L_t^{4}L_z^\infty \Sigma_x^1}^{\f{1}{4}}  \| \phi \|_{\Dot{X}_x}^{\f{1}{2}} \| \phi \|_{\Dot{X}_z}^{\f{1}{4}}.
\end{split}
\end{equation}
From these bounds, we have 
\begin{equation*}
\begin{split}
\| \Phi[\phi]\|_{\Dot{X}_x^1([-T,T])} &\le C_0\|\psi_0\|_{L_z^2\Sigma_x^1}+C_1 \|  \phi\|_{L_t^{4}L_z^\infty \Sigma_x^1}^{2}  \| \phi \|_{\Dot{X}_x}^{5} \| \phi \|_{\Dot{X}_z}^{2}\\
 &\le C_0 \|\psi_0\|_{L_z^2\Sigma_x^1}+C_1\beta_1^2\beta_2^5\beta_3^2
\end{split}
\end{equation*}
for some $C_0, C_1 \ge1$. Similarly,  we also have
\begin{equation}\label{14}
\begin{split}
 \|  \Phi[\phi]\|_{L_t^4 L_z^\infty \Sigma_x^1([-T,T])}  \le \|e^{-it\cH_z} \psi_0\|_{L_t^4L_z^\infty \Sigma_x^1([-T,T])}+C_1\beta_1^3\beta_2^4\beta_3^2.
\end{split}
\end{equation}
For the $z$-derivative and weight, we use (\ref{lmt2-2}) and (\ref{lmt2-3}) and obtain the following estimate for some $\tilde{C}_1\ge1$,
\begin{equation*}
\begin{split}
 \|  \Phi[\phi]\|_{\Dot{X}_z ([-T,T])} \le & C\|\psi_0\|_{L_x^2\Sigma_z^1}+C\| \phi\|_{L_t^\infty L_x^2 \Sigma_z^1([-T,T])} \|e^{-i\theta\cH}\phi\|_{L_t^{\f{32}{3}}L_z^{16}L_{\theta}^{16} L_x^{16}}^8 \\
 &+ \tilde{C}_1 \| \Phi[\phi]\|_{L_t^\infty L_x^2 \Sigma_z^1([-T,T])}T.
\end{split}
\end{equation*}
If $T \le T_0:=1/(2\tilde{C}_1) $, we obtain 
\begin{equation*}
\begin{split}
  \| \Phi[\phi]\|_{\Dot{X}_z ([-T,T])} \le C_0 \|\psi_0\|_{L_x^2\Sigma_z^1}+ C_1\beta_1^2\beta_2^4\beta_3^3.
\end{split}
\end{equation*}
We choose $\al$ so that 
\[ 1+2^{9}C_0^6C_1\al^8  \le2 \]
and $T_1>0$, which satisfies (\ref{ec}). Moreover, we choose 
 \[ \beta_1= 2\| e^{-it\cH_z} \psi_0\|_{L_t^4L_z^\infty \Sigma_x^1([-T_1.T_1])}, \quad  \beta_2= 2C_0\|\psi_0\|_{L_z^2\Sigma_x^1}, \quad \beta_3= 2C_0\|\psi_0\|_{L_x^2\Sigma_z^1}, \] 
and  $T\le \min\{T_0, T_1\}$, then the mapping $\Phi$ is on $M_\al(\beta,T)$.\\

For the contraction,  modifying (\ref{11}) through (\ref{13}), we have the following bound:
\begin{equation*}
\begin{split}
\| \Phi[\phi_1]-\Phi[\phi_2] \|_{\Dot{X}_x} &\lesssim\|\cH_0^{\h}(\Fav(\phi_1)-\Fav(\phi_2))\|_{L_x^2 L_t^{\f{8}{7}} L_z^{\f{4}{3}}}\\
&\lesssim \|\cH_0^\h(\phi_1-\phi_2)\|_{L_t^8 L_\theta^4 L_{\bfx}^4} (\|\phi_1\|_{L_t^{\f{32}{3}}L_\theta^{16} L_\bfx^{16}}^8+ \|\phi_2\|_{L_t^{\f{32}{3}}L_\theta^{16} L_\bfx^{16}}^8 ) \\
&\quad +(\|\cH_0^\h\phi_1 \|_{L_t^8 L_\theta^4 L_{\bfx}^4} +   \| \cH_0^\h \phi_2\|_{L_t^8 L_\theta^4 L_{\bfx}^4})\|\phi_1-\phi_2\|_{L_t^{24} L_\theta^{12} L_{\bfx}^{12}} \\
&\qquad \times (\|\phi_1 \|_{L_t^{\f{168}{17}} L_\theta^{\f{84}{5}} L_{\bfx}^{\f{84}{5}}}^7 + \|\phi_2\|_{L_t^{\f{168}{17}} L_\theta^{\f{84}{5}} L_{\bfx}^{\f{84}{5}}}^7) \\
&\lesssim \|\phi_1-\phi_2\|_{\Dot{X}_x([-T,T])}(\|\phi_1\|_{L_t^4 L_z^\infty \Sigma_x^1}+\|\phi_2\|_{L_t^4 L_z^\infty \Sigma_x^1})^2 \\
&\qquad \times (\|\phi_1\|_{\Dot{X}_x}+\|\phi_2\|_{\Dot{X}_x})^4(\|\phi_1\|_{\Dot{X}_z}+\|\phi_2\|_{\Dot{X}_z})^2\\
&\lesssim \|\phi_1-\phi_2\|_{\Dot{X}_x([-T,T])}\al^8 .
\end{split}
\end{equation*} 
Because we have a similar bound for the $\Dot{X}_z$ norm, for some $C_2 \ge 1$, we have
\begin{equation*}
\begin{split}
&\| \Phi[\phi_1]-\Phi[\phi_2] \|_{L_t^\infty \Sigma^1\cap L_t^4 \Sigma_{z,x}^{\infty,2}([-T,T])} \le C_2 \|\phi_1-\phi_2\|_{L_t^\infty \Sigma^1\cap L_t^4 \Sigma_{z,x}^{\infty,2}([-T,T])}\al^8 .
\end{split}
\end{equation*}
If we impose $C_2 \al^8\le 1/2$,  $\Phi$ becomes a contraction. Uniqueness is proved by almost the same (and standard) argument. \\

Next, we prove the continuous dependence.  Suppose that   $\phi_j \in  C_t\Sigma^1\cap L_t^4 \Sigma_{z,x}^{\infty ,2} ([-T_j,T_j]\times\R^3)$ is a solution with $\phi_j(0)=\psi_{0}^{(j)} \in \Sigma^1$ ($j=1,2$). We assume $\psi_0^{(2)}$ is in the neighborhood of $\psi_0^{(1)}$. Then, for any $\tilde{\alpha} \in (0,\alpha]$, there exists $\tilde{T} \in (0, \min\{ T_1,T_2,1\} ]$, which satisfies
 \begin{equation}
\begin{split}
&( \|e^{-it\cH_z} \psi_0^{(1)}\|_{L_t^4L_z^\infty \Sigma_x^1 ([-\tilde{T},\tilde{T}])} + \|e^{-it\cH_z} \psi_0^{(2)}\|_{L_t^4L_z^\infty \Sigma_x^1 ([-\tilde{T}, \tilde{T}])})^{\f{1}{4}}\\
&\quad \times (\|\psi_0^{(1)}\|_{ L_z^2\Sigma_x^1 }+ \|\psi_0^{(2)}\|_{ L_z^2\Sigma_x^1 })^{\f{1}{2}} (\| \psi_0^{(1)}\|_{L_x^2\Sigma_z^1} + \| \psi_0^{(2)}\|_{L_x^2\Sigma_z^1})^{\f{1}{4}} \le \tilde{\al}.
\end{split}
\end{equation}
Using the definition of $M_\alpha(\beta,T)$ for each initial data, we have
\begin{equation*}
\begin{split}
&\| \phi_1- \phi_2 \|_{L_t^\infty \Sigma^1\cap L_t^4 \Sigma_{z,x}^{\infty,2}([-\tilde{T},\tilde{T}])} \\
&\lesssim  \|\psi_0^{(1)}-\psi_0^{(2)}\|_{\Sigma^1} + \|\phi_1-\phi_2\|_{L_t^\infty \Sigma^1\cap L_t^4 \Sigma_{z,x}^{\infty,2}([-\tilde{T},\tilde{T}])}(\|\phi_1\|_{L_t^4 L_z^\infty \Sigma_x^1}+\|\phi_2\|_{L_t^4 L_z^\infty \Sigma_x^1})^2\\
&\qquad \times(\|\phi_1\|_{\Dot{X}_x}+\|\phi_2\|_{\Dot{X}_x})^4 (\|\phi_1\|_{\Dot{X}_z}+\|\phi_2\|_{\Dot{X}_z})^2\\
&\lesssim  \|\psi_0^{(1)}-\psi_0^{(2)}\|_{\Sigma^1}+  \|\phi_1-\phi_2\|_{L_t^\infty \Sigma^1\cap L_t^4 \Sigma_{z,x}^{\infty,2}([-\tilde{T},\tilde{T}])}\tilde{\al}^8 .
\end{split}
\end{equation*}
If we choose $\tilde{\al}$ sufficiently small, we obtain 
\begin{equation*}
\begin{split}
\| \phi_1- \phi_2 \|_{L_t^\infty \Sigma^1\cap L_t^4 \Sigma_{z,x}^{\infty,2} ([-\tilde{T},\tilde{T}])} \lesssim  \|\psi_0^{(1)}-\psi_0^{(2)}\|_{\Sigma^1}.
\end{split}
\end{equation*}
Iterating the above argument assuming $\psi_0^{(2)}$ is sufficiently closed to $\psi_0^{(1)}$, Lipschitz continuity in $L_t^\infty \Sigma^1\cap L^4 \Sigma_{z,x}^{\infty, 2}$ is proved.  Proof of Theorem \ref{wplmt4} is complete.
\end{proof}

\quad\\

We next prove Theorem \ref{scatter}. We first state the following modification of Theorem \ref{wplmt4} to fit the case of $V\equiv0$. Recall
\[ \|\phi\|_{\Sigma_0^1}=( \|\nabla_\bfx \phi\|_{L^2}^2 + \| |x|\phi \|_{L^2}^2)^\h.\]

\begin{thm}\label{wplmt40}
Let $\sigma=4$ and $V\equiv0$. There exists an absolute constant $\al>0$ such that the following holds. \\

If $\psi_0\in \Sigma_0^1$ and $T_1>0$ satisfy
\begin{equation}\label{ec0}
\begin{split}
\|e^{it\f{\partial_z^2}{2}} \psi_0\|_{L_t^4L_z^\infty \Sigma_x^1\cap L_t^{24}L_z^{\f{12}{5}} \Sigma_x^1 ([-T_1,T_1])}^{\f{3}{4}}\|\partial_z e^{it\f{\partial_z^2}{2}} \psi_0\|_{L_t^{24}L_z^{\f{12}{5}} L_x^2([-T_1,T_1])}^{\f{1}{4}} \le \al,
\end{split}
\end{equation}
then (\ref{lmt}) has a unique solution $\phi \in C_t\Sigma_0^1\cap L_t^{4}L_z^\infty \Sigma_x^1([-T_1,T_1]\times\R^3)$ such that $\partial_z\phi\in L_t^4L_z^\infty L_x^2([-T_1,T_1])$, depending continuously on $\psi_0$.
\end{thm}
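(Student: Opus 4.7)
The plan is to adapt the fixed-point argument of Theorem \ref{wplmt4} to the case $V\equiv 0$, where the $z$-propagator is the free Schr\"{o}dinger group $e^{it\partial_z^2/2}$ and the working space loses the $z$-weight. Since $-\partial_z^2/2$ commutes with both $\cH$ and $\cH_0$, the $x$- and $z$-directions decouple cleanly, and no $V'(z)$ commutator appears when we differentiate the Duhamel formula in $z$. This is exactly why the additional smallness $T_0$ coming from $\tilde C_1$ in Theorem \ref{wplmt4} can be dropped, and the solution exists on the full interval $[-T_1,T_1]$.

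Concretely, I would set up the complete metric space
\[
M(\beta,T_1):=\Bigl\{\phi:\ \|\phi\|_{L_t^4L_z^\infty\Sigma_x^1\cap L_t^{24}L_z^{12/5}\Sigma_x^1([-T_1,T_1])}\le\beta_1,\quad \|\partial_z\phi\|_{L_t^{24}L_z^{12/5}L_x^2\cap L_t^4L_z^\infty L_x^2([-T_1,T_1])}\le\beta_2\Bigr\}
\]
together with the natural $L_t^\infty\Sigma_0^1$ bound, and consider the Duhamel map
\[
\Phi[\phi](t)=e^{it\partial_z^2/2}\psi_0-i\lmd\int_0^t e^{i(t-s)\partial_z^2/2}\Fav(\phi(s))\,ds.
\]
Using the one-dimensional Strichartz inequality in $z$ for the admissible pairs $(4,\infty)$ and $(24,12/5)$, the norm equivalence (\ref{eqH}) for $\cH_0^{1/2}$, and the two-dimensional Strichartz bound for $\{e^{i\theta\cH}\}_\theta$, the analysis reduces to a single multilinear estimate for $\cH_0^{1/2}\Fav(\phi)$ in a dual Strichartz norm; both for $\Phi[\phi]$ itself and, after commuting $\partial_z$ with $e^{i\theta\cH}$, for $\partial_z\Phi[\phi]$ (here nothing analogous to (\ref{lmt2-2}) with a $V'(z)\phi$ term arises).

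The key nonlinear estimate mirrors (\ref{11})--(\ref{13}), with every occurrence of $\Dot X_z$ (which contained $\|z\phi\|$) replaced by Strichartz norms of $\partial_z\phi$. Starting from
\[
\|\cH_0^{1/2}\Fav(\phi)\|_{L_x^2L_t^{4/3}L_z^1}\lesssim \|\cH_0^{1/2}\phi\|_{L_t^4L_z^\infty L_x^2}\,\|e^{-i\theta\cH}\phi\|_{L_t^{32/3}L_z^{16}L_\theta^{16}L_x^{16}}^8,
\]
I would interpolate the $L_z^{16}$ factor by Gagliardo--Nirenberg in $z$ between $L_z^\infty$ and $L_z^{12/5}$ (using $\partial_z$ instead of a $z$-weight), and then apply Minkowski plus Strichartz in $(\theta,x)$. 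Careful bookkeeping of the exponents will give a bound of the form
\[
\|\cH_0^{1/2}\Fav(\phi)\|_{\text{dual}}\lesssim \|\phi\|_{L_t^4L_z^\infty\Sigma_x^1}^{a_1}\|\phi\|_{L_t^{24}L_z^{12/5}\Sigma_x^1}^{a_2}\|\partial_z\phi\|_{L_t^{24}L_z^{12/5}L_x^2}^{a_3},
\]
with $a_1+a_2+a_3=9$ and the exponents matching exactly the powers $\f34,\f14$ appearing in (\ref{ec0}) so that the smallness hypothesis closes the contraction. The contraction estimate, uniqueness, and Lipschitz dependence on the data in $\Sigma_0^1$ are obtained by the same computation applied to $\Phi[\phi_1]-\Phi[\phi_2]$, as in the end of the proof of Theorem \ref{wplmt4}.

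The main obstacle will be the exponent bookkeeping: one has to select the one-dimensional $z$-admissible and two-dimensional $x$-admissible Strichartz pairs and the Gagliardo--Nirenberg interpolation in $z$ simultaneously, so that the resulting multilinear bound is homogeneous of degree $9$ and factors precisely as $\beta_1^{a_1+a_2}\beta_2^{a_3}$ with $a_3\le 1$ (to leave the $\|\partial_z\psi_0\|$-Strichartz norm as the sole factor carrying a $z$-derivative at the linear level). Once the exponents are aligned with (\ref{ec0}), the rest of the argument is a routine Kato-type fixed point at energy-critical scaling.
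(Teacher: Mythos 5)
Your proposal is correct and follows essentially the same route as the paper, which proves this theorem precisely by rerunning the contraction of Theorem \ref{wplmt4} with the $z$-weight norms replaced by Strichartz norms of $\partial_z\phi$, observing that no $V'(z)$ commutator term (i.e.\ nothing like (\ref{lmt2-2})--(\ref{lmt2-3})) arises since $\partial_z$ commutes with $e^{it\partial_z^2/2}$, and that the global free-particle Strichartz estimate removes the restriction $T\le T_0$. The key multilinear bound you target, with exponents $\tfrac14,\tfrac12,\tfrac14$ summing to match (\ref{ec0}), is exactly the estimate the paper records in the proof of Theorem \ref{scatter}.
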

\noindent
The proof is similar to Theorem \ref{wplmt4}.  Since $e^{it\partial_z^2/2}$ and $\partial_z$ are commutative, (\ref{lmt2-2}) and (\ref{lmt2-3}) are not necessary. Moreover for $\{ e^{it\partial_z^2/2} \}_{t\in\R}$ global Strichartz's estimate holds, so the restruction $T\le T_0(\le1)$ can be  eliminated. \\

Finally, we prove Theorem \ref{scatter}.
\begin{proof}[Proof of Theorem \ref{scatter}]
We define a scaling $J_\mu $ as $ (J_\mu \phi)(t,x,z) :=\mu^\f{1}{4} \phi(\mu^2 t, x, \mu z)$. Then 
\[ J_\mu^{-1} \Fav(J_\mu(\phi)(t)) = \Fav(\phi(t))\]
is an easy consequence.  By standard calculation, we obtain
\[ K[J_\mu\phi(t)]=\mu^{-\f{1}{2}}K[\phi(\mu^2t)] \qquad E[J_\mu \phi(t)]= \mu^{\f{3}{2}}E[\phi(\mu^2t)].  \] \\
\quad We prove a small data scattering. Take an initial data $\psi_0 \in \Sigma_0^1$ so that
\begin{equation}
\begin{split}
&\|e^{it\f{\partial_z^2}{2}} \psi_0\|_{L_t^4L_z^\infty \Sigma_x^1\cap L_t^{24}L_z^{\f{12}{5}} \Sigma_x^1 (\R)}^{\f{3}{4}}\|\partial_z e^{it\f{\partial_z^2}{2}} \psi_0\|_{L_t^{24}L_z^{\f{12}{5}} L_x^2(\R)}^{\f{1}{4}} \le C \|\psi_0\|_{ L_z^2\Sigma_x^1 }^{\f{3}{4}}  \| \partial_z \psi_0\|_{L_\bfx^2}^{\f{1}{4}} \le  \al,
\end{split}
\end{equation}
where $\al$ is the constant appeared in the statement of Theorem \ref{wplmt40}. From  Theorem \ref{wplmt40}, we have a global solution $\phi$ with  $\phi(0)=\psi_0$, and in the proof of the theorem we obtain a space time bound
\begin{equation*}
\begin{split}
\| e^{-i\theta\cH}\phi\|_{L_t^{\f{32}{3}} L_{\theta,\bfx}^{16}(\R \times[0,2\pi]\times \R^3)} 
&\lesssim \|\phi\|_{ L_t^4 L_z^\infty \Sigma_x^1(\R \times \R^3) }^{\f{1}{4}} \| \phi \|_{L_t^{24} L_z^{\f{12}{5}} \Sigma_x^1 (\R)}^{\f{1}{2}} \| \partial_z \phi\|_{L_t^{24} L_z^{\f{12}{5}} L_x^2(\R\times \R^3)}^{\f{1}{4}}\\
&\lesssim \|e^{it\f{\partial_z^2}{2}} \psi_0\|_{L_t^4L_z^\infty \Sigma_x^1\cap L_t^{24}L_z^{\f{12}{5}} \Sigma_x^1 (\R)}^{\f{3}{4}}\|\partial_z e^{it\f{\partial_z^2}{2}} \psi_0\|_{L_t^{24}L_z^{\f{12}{5}} L_x^2(\R)}^{\f{1}{4}}  < \infty.
\end{split}
\end{equation*}
Then,
\begin{equation*}
\begin{split}
 \Big\| \int_{t}^{+\infty} e^{i(t-s)\f{\partial_z^2}{2}}\Fav(\phi(s))ds \Big\|_{\Sigma_0^1}  \lesssim \|\phi\|_{L^\infty \Sigma_0^1(\R\times\R^3)}  \| e^{-i\theta\cH}\phi\|_{L_t^{\f{32}{3}} L_{\theta,\bfx}^{16}([t,+\infty] \times[0,2\pi]\times \R^3)}^8 \to 0\\
\end{split}
\end{equation*}
as $t \to +\infty$. Hence,
\[ \phi_+(t) := \psi_0-i\lmd \int_{0}^{+\infty} e^{i(t-s)\f{\partial_z^2}{2}}\Fav(\phi(s))ds \]
has the desired property. Similarly, we also have $\phi_{-} \in \Sigma_0^1$.
\end{proof}

\section{Strong magnetic confinement limit}\label{Sml}

\subsection{Proof of theorem\ref{main3}-(i)}

Theorem \ref{main3} follows from the next proposition.
\begin{prop}\label{3}
Fix $\sigma\in \{1,2 \}$. Suppose $\psiep_0, \phi_0 \in \Sigma^1$ satisfy
\begin{equation}\label{hyiv}
 \lim_{\ep \to +0 }\| \psiep_0 -\phi_0 \|_{\Sigma^1} = 0. 
\end{equation}
Let $\psiep \in C([0,\Tmax^\ep),\Sigma^1)$ be a maximal solution to (\ref{ep}) with the data $\psiep_0$ and $\phi \in C([0,\Tmax),\Sigma^1)$ be a maximal solution to (\ref{lmt}) with the data $\phi_0$. Then, for any small $\delta>0$, there exist $\ep_0=\ep_0(\phi_0, \delta)$, $T=T(\| \phi_0 \|_{\Sigma^1}) \in (0, \inf_{ \ep\in(0,\ep_0] }\{1,\Tmax^\ep, \Tmax \})$, and $C=C(\| \phi_0 \|_{\Sigma^1})>0$ such that

\begin{equation}\label{thm3}
\| e^{it\cH/\ep^2}\psiep-\phi \|_{L^\infty([0,T], \Sigma^1)}\le C(\delta +\| \psiep_0 - \phi_0 \|_{\Sigma^1})
\end{equation}
holds for all $\ep\in (0,\ep_0]$. Moreover, there exists $\ep_1=\ep_1(\| \phi_0 \|_{\Sigma^1})>0$ such that
\begin{equation}\label{thm3-2}
 \| e^{it\cH/\ep^2}\psiep-\phi \|_{L^\infty([0,T], L^2 )}\le C(\ep +\| \psiep_0 - \phi_0 \|_{L^2}) 
\end{equation}
holds for all $\ep \in (0,\ep_1]$.
\end{prop}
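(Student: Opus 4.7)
The plan is to introduce $\tphi(t) := e^{it\cH/\ep^2}\psiep(t)$, which by (\ref{eplmt}) satisfies $i\partial_t\tphi = \cH_z\tphi + \lmd F(t/\ep^2, \tphi)$ with $\tphi(0) = \psi^\ep_0$, and compare it with the solution $\phi$ of (\ref{lmt}) via Duhamel. The difference $w^\ep := \tphi - \phi$ obeys
\bsp
w^\ep(t) = e^{-it\cH_z}(\psi^\ep_0 - \phi_0) &- i\lmd\int_0^t e^{-i(t-s)\cH_z}[F(s/\ep^2, \tphi) - F(s/\ep^2, \phi)]\, ds \\
&- i\lmd\int_0^t e^{-i(t-s)\cH_z}[F(s/\ep^2, \phi) - \Fav(\phi)]\, ds.
\esp
Using $\psi^\ep_0 \to \phi_0$ in $\Sigma^1$ together with the local well-posedness Theorems \ref{wpep1}--\ref{wpep2} and \ref{wplmt1}--\ref{wplmt2}, one obtains $T = T(\|\phi_0\|_{\Sigma^1}) \in (0,1]$ and $\ep_* > 0$ such that $\tphi$ (for $\ep \in (0, \ep_*]$) and $\phi$ coexist on $[0,T]$ with a uniform bound $M$ in $L_t^\infty\Sigma^1$. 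The Lipschitz difference $F(s/\ep^2, \tphi) - F(s/\ep^2, \phi)$ is then controlled in $\Sigma^1$ by $C(M)\|w^\ep\|_{\Sigma^1}$.

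\textbf{Oscillatory antiderivative.} Since $\theta \mapsto F(\theta,u) - \Fav(u)$ is $2\pi$-periodic with zero mean, its primitive
\[ K(\theta, u) := \int_0^\theta [F(\tau, u) - \Fav(u)]\, d\tau \]
is $2\pi$-periodic in $\theta$, smooth in $u$, and obeys $\Sigma^k$ bounds analogous to those of $F$ and $\Fav$ (cf.\ (\ref{Favbdd1}), (\ref{Favbdd2})). Using $i\partial_s\phi = \cH_z\phi + \lmd\Fav(\phi)$, the chain rule yields
\[ F(s/\ep^2, \phi) - \Fav(\phi) = \ep^2\partial_s[K(s/\ep^2, \phi(s))] + i\ep^2 D_u K(s/\ep^2,\phi(s))[\cH_z\phi + \lmd\Fav(\phi)]. \]
Integrating by parts in $s$ against $e^{-i(t-s)\cH_z}$ converts the third Duhamel term into a sum of $\ep^2$-prefactored pieces: the boundary values $\ep^2 K(t/\ep^2,\phi(t)) - \ep^2 e^{-it\cH_z}K(0,\phi_0)$, a diffusion-type contribution $-i\ep^2\int_0^t e^{-i(t-s)\cH_z}\cH_z K(s/\ep^2,\phi)\, ds$, and a source $i\ep^2\int_0^t e^{-i(t-s)\cH_z} D_u K[\cH_z\phi+\lmd\Fav(\phi)]\, ds$. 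Estimating these in $\Sigma^1$ (resp.\ $L^2$) requires roughly two (resp.\ one) extra $z$-derivatives on $\phi$.

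\textbf{Regularization and combination.} Since $\phi$ is only in $\Sigma^1$, I regularize: for each $\delta > 0$ mollify $\phi_0$ to $\phi_0^\delta$ in a higher Hermite--Sobolev space ($\Sigma^3$ for the $\Sigma^1$ estimate, $\Sigma^2$ for the $L^2$ estimate) with $\|\phi_0-\phi_0^\delta\|_{\Sigma^1}\le\delta$ (resp.\ $\|\phi_0-\phi_0^\delta\|_{L^2}\le\delta$), with the higher norm controlled by a negative power of $\delta$ (via truncation in the Hermite eigenbasis in $x$ plus heat-kernel smoothing and weight-preserving cutoff in $z$). Persistence of regularity for (\ref{lmt}) (Remark after Theorem \ref{main2}) and continuous dependence yield $\phi^\delta \in C([0,T], \Sigma^3)$ with $\|\phi-\phi^\delta\|_{L_t^\infty\Sigma^1}\lesssim \delta$. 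Writing $\tilde\phi^{\ep,\delta} := e^{it\cH/\ep^2}\psi^{\ep,\delta}$ for the filtered ($\ep$-NLS) solution with data $\phi_0^\delta$, continuous dependence gives $\|\tphi - \tilde\phi^{\ep,\delta}\|_{L_t^\infty\Sigma^1} \lesssim \|\psi^\ep_0-\phi_0\|_{\Sigma^1} + \delta$. Applying the identity of the previous paragraph to the pair $(\tilde\phi^{\ep,\delta},\phi^\delta)$, the Lipschitz term feeds Gronwall (with constant $C(M)$), while the oscillatory term is bounded by $\ep^2 P(\|\phi^\delta\|_{L_t^\infty\Sigma^3})$ in $\Sigma^1$ and by $\ep^2\|\phi^\delta\|_{L_t^\infty\Sigma^2}\lesssim \ep^2\delta^{-1}$ in $L^2$. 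Taking $\ep\le \ep_0(\delta)$ so that the first is $\le\delta$ yields (\ref{thm3}) by triangle inequality; choosing $\delta \sim \ep$ in the $L^2$ argument balances $\delta + \ep^2\delta^{-1}\sim \ep$ and yields (\ref{thm3-2}).

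\textbf{Main obstacle.} The delicate ingredient is choreographing the regularization so that (i) persistence of regularity for (\ref{lmt}) genuinely supplies a $\Sigma^3$ (resp.\ $\Sigma^2$) solution on the same time interval $[0,T]$ with high-Sobolev norm polynomially controlled by $\delta$, and (ii) the uniform $\Sigma^1$ bound $M$ on the ($\ep$-NLS) side, which rests on the $L^4L_z^\infty$-type Strichartz norms of Section \ref{Wpep}, survives after the approximation. Closing the two-derivative loss in $z$ from the integration by parts without destroying these uniform bounds is the technical heart of the argument, and requires careful interplay of Strichartz estimates for $\{e^{it\cH_z}\}_{t\in\R}$ with the interpolation-type bounds on $\Fav$.
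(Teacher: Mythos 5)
Your proposal is correct and follows essentially the same route as the paper: the same splitting of the Duhamel formula into a Lipschitz piece and an oscillatory piece, the same integration by parts via the zero-mean primitive $\cF(s,u)=\int_0^s(F(\theta,u)-\Fav(u))d\theta$ at the cost of two (resp.\ one) extra derivatives, the same regularization of the data to a higher space $\Sigma^3$ (resp.\ $\Sigma^2$), and the same balancing $\delta\sim\ep$ for the $L^2$ rate. The only cosmetic differences are that you work in the filtered frame and mollify generically, whereas the paper compares $\psiep$ with $e^{-it\cH/\ep^2}\phi_n$ directly and regularizes by the Hermite spectral projections $P_{\le n}$.
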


\begin{proof}
We first prove the case  $\sigma=1$. We consider the 3-dimensional harmonic oscillator
\[ \cH_{\bfx,0}=-\h\Delta_\bfx +\f{1}{8}|\bfx|^2. \]
We denote by $P_n (n\in\N_0)$ the  spectral projection  in  $L^2(\R^3)$ onto the eigenspace corresponding to eigenvalue $\f{2n+3}{4}$ (see Section \ref{Ap}). Then, the Hermite expansion of $u \in L^2(\R^3)$ is
\[ u(\bfx)=\sum_{n\ge0} P_n u(\bfx). \]
Let $P_{\le n}$ be the projection 
\[ P_{\le n} u (\bfx):= \sum_{m=0}^n P_m u(\bfx), \]
$P_{\le n}$ is bounded from $L^2$ to $\Sigma^k$ for any $k\in \N$.
We denote a solution to (\ref{lmt}) with an initial data $\phi_{0,n}:=P_{\le n} \phi_0$ as $\phi_n \in C([0,\Tmax^{(n)}), \Sigma^1)$ (where $\Tmax^{(n)}$ is the maximal time of existence) and denote $\tpsi_n(t):= e^{-it\cH/\ep^2} \phi_n(t)$, which  solves
\begin{equation*} 
i\partial_t\tpsi= (\f{\cH}{\ep^2}+\cH_z)\tpsi+\lmd F_{\text{av}}(\tpsi) \qquad \tpsi |_{t=0} = \phi_{0,n}.
\end{equation*}
Here, we use the following equality obtained by periodicity of $e^{i\theta\cH}$ (see (\ref{Fnl})):
\begin{equation*}
\begin{split}
F_{\text{av}}(e^{it\cH/\ep^2} u) &= \f{1}{2\pi}\int_0^{2\pi} e^{i\theta\cH}(|e^{-i\theta\cH+it\cH/\ep^2} u|^{2}e^{-i\theta\cH+it\cH/\ep^2} u)d\theta \\
&=\f{1}{2\pi}\int_{-t/\ep^2}^{2\pi-t/\ep^2} e^{i\theta\cH+it\cH/\ep^2}(|e^{-i\theta\cH} u|^{2}e^{-i\theta\cH} u)d\theta = e^{it\cH/\ep^2}F_{\mathrm{av}}(u) .
\end{split}
\end{equation*}
By Duhamel's formula, we have
\begin{equation}\label{lmt3}
\tpsi_n(t)= e^{-it(\cH/\ep^2+\cH_z)}\phi_{0,n}-i \lmd \int_0^t e^{-i(t-s)(\cH/\ep^2+\cH_z)}F_{\text{av}}(\tpsi_n(s))ds .
\end{equation}

Based on the assumption (\ref{hyiv}), there exists $\ep_0' >0$ such that for any $\ep \in (0,\ep_0']$,
\[ \| \psi^\ep_0-\phi_0 \|_{\Sigma^1} \le \f{1}{10} \| \phi_0 \|_{\Sigma^1},  \qquad  \text{i.e.}  \qquad   \f{9}{10} \|\phi_0\|_{\Sigma^1} \le \|\psiep_0\|_{\Sigma^1} \le \f{11}{10} \|\phi_0\|_{\Sigma^1}\]
and $n_0 \in \N$ such that for any $n_0\le n$, 
\[ \| \phi_{0,n}-\phi_0 \|_{\Sigma^1} \le \f{1}{10} \| \phi_0 \|_{\Sigma^1},  \qquad  \text{i.e.}  \qquad  \f{9}{10} \|\phi_0\|_{\Sigma^1} \le \|\phi_{0,n}\|_{\Sigma^1} \le \|\phi_0\|_{\Sigma^1}. \]
From the argument  of constructing a local solution to (\ref{ep}) and (\ref{lmt}),
\[ T_0:=\inf_{0<\ep \le \ep_0',\text{ }  n_0 \le n}\{ \Tmax^\ep, \Tmax^{(n)}\}\]
is positive. Therefore, in the following proof, we assume 

\begin{equation}\label{ass}
\ep \le \ep_0'  \qquad \text{and} \qquad  n\ge n_0.
\end{equation}

Let
\begin{equation}\label{dif}
\begin{split}
 u_n^\ep(t)&:=\psiep(t)-\tpsi_n(t) \\
 &= e^{-it(\cH/\ep^2+\cH_z)}(\psiep_0-\phi_{0,n}) \\
 &\quad -i\lmd \int_0^t e^{-i(t-s)(\cH/\ep^2+\cH_z)}\big(|\psiep(s)|^{2}\psiep(s)-|\tpsi_n(s)|^{2}\tpsi_n(s)\big)ds \\
 & \quad -i\lmd \int_0^t e^{-i(t-s)(\cH/\ep^2+\cH_z)}\big(|\tpsi_n(s)|^{2}\tpsi_n(s)-F_{\text{av}}(\tpsi_n(s))\big)ds \\
 &=:e^{-i(t-s)(\cH/\ep^2+\cH_z)}(\psiep_0-\phi_{0,n})-i\lmd(A_1+A_2).
\end{split}
\end{equation}

For $T\in(0, \min\{T_0,1\}]$, we have
\begin{equation}\label{estu1}
 \| u^{\ep}_n \|_{L_t^\infty( [0,T],\Sigma^1)}\le C\| (\psiep_0-\phi_{0,n}) \|_{\Sigma^1} +\|A_1 \|_{L_t^\infty ([0,T],\Sigma^1)}+\|A_2 \|_{L_t^\infty ([0,T], \Sigma^1)} .
 \end{equation}

\quad\\
\noindent
\underline{Estimate of $A_1$}\\

Similarly to the bound of $\Psi^\ep$ in the proof of Theorem \ref{wpep1}, we have
\begin{equation*}
\begin{split}
\|A_1 \|_{L_t^\infty \Sigma^1}&\lesssim  \| u_n^\ep\|_{L_t^\infty\Sigma^1\cap L_t^4\Sigma_{x,z}^{4,2}}( \| \psiep \|_{L_t^\infty\Sigma^1\cap L_t^4\Sigma_{x,z}^{4,2}}^2 +\| \tpsi_n \|_{L_t^\infty\Sigma^1\cap L_t^4\Sigma_{x,z}^{4,2}}^2 )T^{\f{1}{4}}.
\end{split}
\end{equation*}
By the definition of $M(a,T)$ in the proof of Theorem \ref{wpep1}, (\ref{T1}), and (\ref{ass}), for any small $T$ such that
\begin{equation}\label{T10}
T\le \f{1}{\tilde{C}} \min\{ \f{1}{\|\phi_0 \|_{\Sigma^1}^{8}}, 1\} \lesssim \min \{\f{1}{\|\psiep_0 \|_{\Sigma^1}^{8}}, 1\} ,
\end{equation}
(where $\tilde{C}\ge1$ is a sufficiently large constant) we have
\[ \| \psiep \|_{L_t^\infty \Sigma^1\cap L_t^4\Sigma_{x,z}^{4,2}([0,T])} \lesssim \| \psiep_0 \|_{\Sigma^1} \lesssim  \| \phi_0 \|_{\Sigma^1}. \]
Taking the $L_t^\infty \Sigma^1\cap L_t^4\Sigma_{x,z}^{4,2}([0,T])$ norm on (\ref{lmt3}), we have
\begin{equation*}
\begin{split}
\| \tpsi_n\|_{L_t^\infty \Sigma^1 \cap L_t^4\Sigma_{x,z}^{4,2}}&\lesssim \|  \phi_{0,n}\|_{\Sigma^1}+ (\| \nabla_\bfx \Fav(\tpsi_n(t)) \|_{  L_t^1 L_x^2L_z^2}+\| \langle \bfx \rangle \Fav(\tpsi_n(t)) \|_{  L_t^1 L_x^2L_z^2}\\
&\lesssim \|  \phi_{0,n}\|_{\Sigma^1}+ \| \tpsi_n \|_{L_t^\infty \Sigma^1}  \| \tpsi_n \|_{L_t^\infty L^2} \| \partial_z \tpsi_n \|_{L_t^\infty L^2} T\\
&\lesssim \|  \phi_{0,n}\|_{\Sigma^1}+ \| \phi_n \|_{L_t^\infty \Sigma^1}^3 T .
\end{split}
\end{equation*}
By the definition of $M(a,T)$ in the proof of Theorem \ref{wplmt1}, (\ref{T0}), and (\ref{ass}), for any small $T$ such that
\begin{equation}\label{T4}
T \lesssim \min\{ \f{1}{\|\phi_{0}\|_{ \Sigma^1}^2}, 1\}  \lesssim \min\{ \f{1}{\|\phi_{0,n}\|_{ \Sigma^1}^2}, 1\} ,
\end{equation}
we have
\begin{equation*}
\| \phi_n\|_{L^\infty([0,T], \Sigma^1)} \lesssim \|\phi_{0,n}\|_{\Sigma^1} \lesssim \|  \phi_{0}\|_{\Sigma^1} . 
\end{equation*}
Hence,
\begin{equation*}
\| \tpsi_n\|_{L_t^\infty\Sigma^1\cap L_t^4\Sigma_{x,z}^{4,2}([0,T])} \lesssim \|  \phi_{0}\|_{\Sigma^1} . 
\end{equation*}

To summarize the argument presented so far, if  $T$ satisfies (\ref{T10}) and (\ref{T4}), and the constant $\tilde{C}$ in (\ref{T10}) is sufficiently large,  $A_1$ is bounded as
\begin{equation*}
\|A_1 \|_{L_t^\infty \Sigma^1([0,T])}\le C \| u_n^\ep\|_{L_t^\infty \Sigma^1\cap L_t^4\Sigma_{x,z}^{4,2}([0,T])}\|\phi_0\|_{\Sigma^1}^2T^{\f{1}{4}}\le \f{1}{10} \| u_n^\ep\|_{L_t^\infty \Sigma^1\cap L_t^4\Sigma_{x,z}^{4,2}([0,T])}.
\end{equation*}
Then, we have
\begin{equation*}
\begin{split}
\| u_n^\ep \|_{L_t^\infty \Sigma^1([0,T]) }&\le C\| \psiep_0-\phi_{0,n} \|_{\Sigma^1} +(\|A_1 \|_{L_t^\infty \Sigma^1([0,T])}+\|A_2 \|_{L_t^\infty \Sigma^1([0,T])} ) \\
&\le C\| \psiep_0-\phi_{0,n} \|_{\Sigma^1} +\f{1}{10} \| u_n^\ep\|_{L_t^\infty\Sigma^1 \cap L_t^4 \Sigma_{x,z}^{4,2}}+\|A_2 \|_{L_t^\infty \Sigma^1} .
\end{split}
\end{equation*}\\
 Taking the $L_t^4\Sigma_{x,z}^{4,2}([0,T])$ norm on (\ref{dif}) and iterating the above argument, we have 
 \begin{equation*}
\begin{split}
\| u_n^\ep \|_{L_t^4\Sigma_{x,z}^{4,2}([0,T])} &\le C\| \psiep_0-\phi_{0,n} \|_{\Sigma^1} +\f{1}{10} \| u_n^\ep\|_{L_t^\infty\Sigma^1 \cap L_t^4 \Sigma_{x,z}^{4,2}}+\|A_2 \|_{L_t^4 \Sigma_{x,z}^{4,2}}.
\end{split}
\end{equation*}
Combining the two inequalities presented above, we obtain
\begin{equation}\label{uep2}
\begin{split}
\| u_n^\ep \|_{L_t^\infty \Sigma^1\cap L_t^4\Sigma_{x,z}^{4,2}([0,T])} \lesssim \| \psiep_0-\phi_{0,n} \|_{\Sigma^1} +\|A_2 \|_{L_t^\infty \Sigma^1\cap L_t^4\Sigma_{x,z}^{4,2}([0,T])}.
\end{split}
\end{equation}\\

\noindent
\underline{Estimate of $A_2$}\\

We introduce the following function, defined on $\R\times \Sigma^2$,
\[  \cF(s, u):=\int_0^s (F(\theta,u)-F_{\text{av}}(u))d\theta .\]
Because $F(\cdot, u)$ is $2\pi$-periodic and $F_{\text{av}}(u)$ is the time averaged function of  $F(\cdot, u)$, it follows that
\[ \int_0^s (F(\theta,u)-F_{\text{av}}(u))d\theta = \int_{2\pi\lfloor s/2\pi \rfloor}^s (F(\theta,u)-F_{\text{av}}(u))d\theta, \] 
where $\lfloor\cdot \rfloor$ is the floor function. Here, we calculate
\begin{equation}\label{ibp}
\begin{split}
&e^{-i(t-s)\cH_z} \Big(F(\f{s}{\ep^2}, \phi_n(s))-F_{\text{av}}(\phi_n(s))\Big)\\
&=\ep^2\f{d}{ds}\Big(e^{-i(t-s)\cH_z}\cF(\f{s}{\ep^2}, \phi_n(s))\Big)-i\ep^2e^{-i(t-s)\cH_z}\cH_z\cF(\f{s}{\ep^2}, \phi_n(s))\\
&\qquad -\ep^2 e^{-i(t-s)\cH_z}D_u\cF(\f{s}{\ep^2}, \phi_n(s))[ \partial_t \phi_n(s)]
\end{split}
\end{equation}
where $D_u\cF(s, u)[v]$ is
\begin{equation*}
\begin{split}
D_u\cF(s, u)[v]=& \int_0^s (F(\theta,v,u,u)-F_{\text{av}}(v,u,u))d\theta \\
&+ \int_0^s (F(\theta,u,v,u)-F_{\text{av}}(u,v,u))d\theta + \int_0^s (F(\theta,u,u,v)-F_{\text{av}}(u,u,v))d\theta \\
\end{split}
\end{equation*}
with
\[ F(\theta, u_1,u_2,u_3):= e^{i\theta\cH}((e^{-i\theta\cH}u_1)\overline{(e^{-i\theta\cH}u_2)}(e^{-i\theta\cH}u_3)) .\]
We begin from the estimate of $A_2$. By the periodicity of $\Fav$, 
\begin{equation*}
 A_2= e^{-it\cH/\ep^2} \int_0^t e^{-i(t-s)\cH_z}\big(F(\f{s}{\ep^2},\phi_n(s))-F_{\text{av}}(\phi_n(s))\big)ds.
\end{equation*}
Applying (\ref{ibp}) to $A_2$, we obtain
\begin{equation}\label{A2}
\begin{split}
\| A_2\|_{L_t^\infty \Sigma^1} &\lesssim \ep^2 \Big\| \cF(\f{t}{\ep^2}, \phi_n(t)) \Big\|_{L_t^\infty \Sigma^1}+\ep^2 \Big\|  e^{-it\cH/\ep^2}\int_0^t  e^{is\cH_z} \cH_z\cF(\f{s}{\ep^2}, \phi_n(s)) ds \Big\|_{L_t^\infty \Sigma^1}\\
& \qquad + \ep^2\Big\|  e^{-it\cH/\ep^2}\int_0^t e^{is\cH_z}D_u\cF(\f{s}{\ep^2}, \phi_n(s))[ \partial_t\phi_n(s) ]ds \Big\|_{L_t^\infty \Sigma^1}\\
&=:\ep^2(I_1+I_2+I_3).
\end{split}
\end{equation}

\noindent
We estimate $I_1$. First, we have
\begin{equation}\label{calF}
 \Big\| \cF(\f{t}{\ep^2}, \phi_n(t)) \Big\|_{L_t^\infty \Sigma^1} \le \Big\| \int_{2\pi \lfloor t/2\pi\ep^2\rfloor}^{t/\ep^2} F(\theta, \phi_n(t))d\theta \Big\|_{L_t^\infty \Sigma^1} + \Big\| \int_{2\pi \lfloor t/2\pi\ep^2\rfloor}^{t/\ep^2} F_{\text{av}}(\phi_n(t))d\theta \Big\|_{L_t^\infty \Sigma^1} .
 \end{equation}
Because $|t/\ep^2-2\pi \lfloor t/2\pi\ep^2\rfloor| \le 2\pi$ holds for any $\ep$ and $t$,  we estimate the right-hand side of (\ref{calF}) as (\ref{Favbdd1}) to obtain
 \begin{equation}\label{estF}
\begin{split}
\Big\| \int_{2\pi \lfloor t/2\pi\ep^2\rfloor}^{t/\ep^2}  F(\theta, \phi_n(t)) d\theta \Big\|_{L_t^\infty \Sigma^1}
 \lesssim  \| \phi_n \|_{L_t^\infty \Sigma^1}^3
\end{split}
\end{equation}
and
\[ \Big\| \int_{2\pi \lfloor t/2\pi\ep^2\rfloor}^{t/\ep^2} F_{\text{av}}(\phi_n(t))d\theta \Big\|_{L_t^\infty \Sigma^1}  \lesssim  \| \phi_n \|_{L_t^\infty \Sigma^1}^3. \]
Hence, we have
\begin{equation*}
\Big\| \cF(\f{t}{\ep^2}, \phi_n(t)) \Big\|_{L_t^\infty \Sigma^1}  \lesssim  \| \phi_n \|_{ L_t^\infty\Sigma^1}^3.
\end{equation*}\\

\noindent
Next, we address $I_2$. We decompose
\begin{equation*}
\begin{split}
 &\Big\|  e^{-it\cH/\ep^2}\int_0^t  e^{is\cH_z} \cH_z\cF(\f{s}{\ep^2}, \phi_n(s)) ds \Big\|_{L_t^\infty \Sigma^1} \\
 \le&  \Big\|  e^{-it\cH/\ep^2}\int_0^t  e^{is\cH_z} \cH_z \int_{2\pi \lfloor s/2\pi\ep^2\rfloor}^{s/\ep^2} F(\theta, \phi_n(s)) d\theta ds \Big\|_{L_t^\infty \Sigma^1} \\
 &\quad +\Big\|  e^{-it\cH/\ep^2}\int_0^t  e^{is\cH_z} \cH_z \int_{2\pi \lfloor s/2\pi\ep^2\rfloor}^{s/\ep^2} F_{\text{av}}(\phi_n(s)) d\theta ds \Big\|_{L_t^\infty \Sigma^1}\\
 =:& J_1+J_2.
\end{split}
\end{equation*}
We now estimate $J_1$.  By a change of variable  $\theta= \tilde{\theta}+s/\ep^2$, we have
\begin{equation*}
\begin{split}
J_1 &= \Big\| e^{-it\cH/\ep^2}\int_0^t e^{is\cH_z} \cH_z \int_{-s/\ep^2+2\pi \lfloor s/2\pi\ep^2\rfloor}^{0}  e^{is\cH/\ep^2}  F(\tilde{\theta}, \tpsi_n(s)) d\tilde{\theta} ds \Big\|_{L_t^\infty \Sigma^1} \\
&=  \Big\|\int_0^t e^{-i(t-s)\cH/\ep^2} G_\ep(s,\bfx) ds\Big\|_{L_t^\infty \Sigma^1} 
\end{split}
\end{equation*}
where
\[ G_\ep(s,\bfx):=  e^{is\cH_z} \cH_z \int_{-s/\ep^2+2\pi \lfloor s/2\pi\ep^2\rfloor}^{0}  F(\tilde{\theta}, \tpsi_n(s))  d\tilde{\theta}. \]
 By  (\ref{eqH}), Minkowski's inequality, and Strichartz's estimate, we have
\begin{equation*}
\begin{split}
&\Big\|\int_0^t e^{-i(t-s)\cH/\ep^2} G_\ep(s,\bfx) ds\Big\|_{L_t^\infty \Sigma^1} \\
\lesssim& \Big\|\int_0^t \cH_0^\h e^{-i(t-s)\cH/\ep^2} G_\ep(s,\bfx) ds\Big\|_{L_z^2 L_t^\infty L_x^2} + \Big\|\int_0^t \partial_z e^{-i(t-s)\cH/\ep^2} G_\ep(s,\bfx) ds\Big\|_{L_z^2 L_t^\infty L_x^2}\\
& +\Big\|\int_0^t z e^{-i(t-s)\cH/\ep^2} G_\ep(s,\bfx) ds\Big\|_{L_z^2 L_t^\infty L_x^2} \\
\lesssim& \| \cH_0^\h G_\ep(t,\bfx) \|_{L_x^2 L_t^1L_x^2}+ \| \partial_z G_\ep(t,\bfx) \|_{L_z^2 L_t^1 L_x^2} +\| z G_\ep(t,\bfx) \|_{L_z^2 L_t^1 L_x^2} \\
\lesssim& \| \cH_0^\h G_\ep(t,\bfx) \|_{ L_t^1L_\bfx^2} +  \| G_\ep(t,\bfx) \|_{ L_t^1 L_x^2 \Sigma_z^1}.
\end{split}
\end{equation*}
For the first term, by Lemma \ref{sigmaz2}, we have
\begin{equation*}
\begin{split}
  \| \cH_0^\h G_\ep(t,\bfx) \|_{ L_t^1L_\bfx^2} 
  &= \Big\|  \cH_z \cH_0^\h\int_{ -t/\ep^2 + 2\pi \lfloor t/2\pi\ep^2 \rfloor}^{0} F(\tilde{\theta}, \tpsi_n)   d\tilde{\theta}  \Big\|_{ L_t^1 L_\bfx^2 } \\
 &\lesssim \Big\|  \cH_0^\h\int_{ -t/\ep^2 + 2\pi \lfloor t/2\pi\ep^2 \rfloor}^{0} F(\tilde{\theta}, \tpsi_n)   d\tilde{\theta}  \Big\|_{L_t^1 L_x^2 \Sigma_z^2}\\
 &\lesssim \Big\|  \int_{2\pi \lfloor t/2\pi\ep^2\rfloor}^{t/\ep^2}  F(\theta, \phi_n) d\theta \Big\|_{L_t^1 \Sigma^3}.\\
\end{split}
\end{equation*}
For the second term, we apply Lemma \ref{sigmaz} and Lemma \ref{sigmaz2}, (Because we assume $T\le1$, the constant can be independent of $T$) 
\begin{equation*}
\begin{split}
  \| G_\ep(t,\bfx) \|_{ L_t^1 L_x^2 \Sigma_z^1} &\lesssim \Big\|  \int_{ -t/\ep^2 + 2\pi \lfloor t/2\pi\ep^2 \rfloor}^{0} F(\tilde{\theta}, \tpsi_n)   d\tilde{\theta}  \Big\|_{L_t^1 L_x^2 \Sigma_z^3}\\
 &\lesssim \Big\|  \int_{2\pi \lfloor t/2\pi\ep^2\rfloor}^{t/\ep^2}  F(\theta, \phi_n) d\theta \Big\|_{L_t^1 \Sigma^3}.\\
\end{split}
\end{equation*}
Similarly to (\ref{estF}), we have
\begin{equation*}
\begin{split}
J_1\lesssim \Big\| \int_{2\pi \lfloor t/2\pi\ep^2\rfloor}^{t/\ep^2}  F(\theta, \phi_n) d\theta \Big\|_{L_t^1 \Sigma^3}
 \lesssim \| \phi_n \|_{ L_t^\infty \Sigma^3} \| \phi_n \|_{L_t^\infty \Sigma^1}^2.
\end{split}
\end{equation*}
We also have the same bound for $J_2$.\\

\noindent
 $I_3$ is treated similarly to $I_2$ using the following bound
\[ \| \partial_t \phi_n \|_{\Sigma^1}\lesssim \| \phi_n \|_{\Sigma^3} +\|\phi_n \|_{\Sigma^1}^2\| \phi_n \|_{L_\bfx^2}, \]
which is obtained by taking the $\Sigma^1 $ norm for the both sides of (\ref{lmt}).\\

\noindent
From these estimates, we have
\begin{equation}\label{A22}
\|A_2\|_{L_t^\infty \Sigma^1}\lesssim \ep^2 ( \| \phi_n \|_{ L_t^\infty \Sigma^3} +\| \phi_n \|_{L_t^\infty\Sigma^1}^3 )\| \phi_n \|_{L_t^\infty \Sigma^1}^2.
\end{equation}

\noindent
Finally, for $\|A_2\|_{L_{t}^4 \Sigma_{x,z}^{4,2}}$ we have
\begin{equation*}
\begin{split}
\| A_2\|_{L_{t}^4 \Sigma_{x,z}^{4,2}} &\le \ep^2 \Big\| e^{-it\cH/\ep^2} \cF(\f{t}{\ep^2}, \phi_n(t)) \Big\|_{L_{t}^4 \Sigma_{x,z}^{4,2}}\\
& \quad +\ep^2 \Big\| e^{-it\cH/\ep^2} \int_0^t  e^{is\cH_z} \cH_z\cF(\f{s}{\ep^2}, \phi_n(s)) ds \Big\|_{L_{t}^4 \Sigma_{x,z}^{4,2}}\\
& \quad + \ep^2 \Big\| e^{-it\cH/\ep^2} \int_0^t e^{is\cH_z}D_u\cF(\f{s}{\ep^2}, \phi_n(s))[\partial_t\phi_n(s)] ds \Big\|_{L_{t}^4 \Sigma_{x,z}^{4,2}}.
\end{split}
\end{equation*}
The second and third terms on the right-hand side can be treated similar to  $I_2$ and $I_3$, respectively.
For fhe first term, we use the embedding $ \Sigma^2  \hookrightarrow \Sigma_{x,z}^{4,2}$. Then, we have
\begin{equation}\label{A23}
\begin{split}
\|A_2\|_{L_{t}^4 \Sigma_{x,z}^{4,2}([0,T])} \lesssim \ep^2 (\| \phi_n \|_{ L_t^\infty \Sigma^3}+\| \phi_n \|_{L_t^\infty\Sigma^1}^3 ) \| \phi_n \|_{L_t^\infty \Sigma^1}^2.
\end{split}
\end{equation}
We obtain the bound of $A_2$.\\

By (\ref{perlmt1}) and continuous dependence,  if we take $n$ sufficiently large, for any positive integer $k$ and any small $T \lesssim  \min\{ \|  \phi_0\|_{\Sigma^1}^{-2}, 1\}$, we have
\[  \| \phi_n \|_{ L_t^\infty \Sigma^k([0,T])} \lesssim  \| \phi_{0,n} \|_{\Sigma^k} \]
and
\[ \| \phi-\phi_n \|_{L_t^\infty \Sigma^1([0,T])} \lesssim  \| \phi_0-\phi_{0,n} \|_{\Sigma^1} . \]
Hence, we have
\begin{equation}\label{uep3}
\begin{split}
\| u_n^\ep \|_{L_t^\infty \Sigma^1} \lesssim \| \psiep_0-\phi_{0,n} \|_{\Sigma^1} +\ep^2 (\| \phi_{0,n} \|_{\Sigma^3}  + \| \phi_{0,n} \|_{\Sigma^1}^3 ) \| \phi_{0,n} \|_{\Sigma^1}^2.
\end{split}
\end{equation}
From the argument presented so far, if we set $\ep$ small and $n$ large depending on $\delta$ and $\phi_0$, and set $T$ small depending on $\| \phi_0 \|_{\Sigma^1}$, we conclude
\begin{equation*}
\begin{split}
\| e^{it\cH/\ep^2}\psiep-\phi &\|_{L^\infty([0,T], \Sigma^1)} \lesssim \| \phi-\phi_n \|_{L^\infty ([0,T], \Sigma^1)}+\| u_n^\ep \|_{L^\infty([0,T], \Sigma^1)}\\
&\lesssim \| \phi_0-\phi_{0,n} \|_{\Sigma^1} + \| \psiep_0-\phi_0 \|_{\Sigma^1} +\ep^2( \| \phi_{0,n} \|_{\Sigma^3}  + \| \phi_{0,n} \|_{\Sigma^1}^3) \| \phi_{0,n} \|_{\Sigma^1}^2\\
&\lesssim \delta +\ep^2 n^2\| \phi_0 \|_{\Sigma^1}^3+ \ep^2\| \phi_0 \|_{\Sigma^1}^5 + \| \psiep_0-\phi_0 \|_{\Sigma^1}  \\ 
&\le \delta(1+\| \phi_0 \|_{\Sigma^1}^3+ \| \phi_0 \|_{\Sigma^1}^5) +  \| \psiep_0-\phi_0 \|_{\Sigma^1}.\\ 
\end{split}
\end{equation*}
\quad To prove (\ref{main3-2}), we conduct the same argument for the $ L_t^\infty L_{\bfx}^2 $ norm instead of the $ L_t^\infty \Sigma^1 $ norm.  Then, we have 
\begin{equation*}
\begin{split}
&\|e^{it\cH/\ep^2}\psiep-\phi \|_{L^\infty ([0,T],L_\bfx^2)} \\
\lesssim& \| \phi_0-\phi_{0,n} \|_{L_\bfx^2} + \| \psiep_0-\phi_0 \|_{L_\bfx^2} +\ep^2 (\| \phi_{0,n} \|_{\Sigma^2} + \| \phi_{0,n} \|_{\Sigma^1}^3) \| \phi_{0,n} \|_{\Sigma^1}^2\\
 \lesssim&  \f{1}{n} \| \phi_0 \|_{\Sigma^1}+\ep^2 n \| \phi_0 \|_{\Sigma^1}^3 + \ep^2\| \phi_0 \|_{\Sigma^1}^5  + \| \psiep_0-\phi_0 \|_{L_\bfx^2}.
\end{split}
\end{equation*}
If we set $\ep$ small and $n$ large depending on $\| \phi_0 \|_{\Sigma^1}$, maintaining the relation $\ep \simeq n^{-1}$, we conclude
\[ \| e^{it\cH/\ep^2}\psiep-\phi \|_{L^\infty([0,T], L_\bfx^2)} \lesssim \ep(1+\| \phi_0 \|_{\Sigma^1}^3+ \| \phi_0 \|_{\Sigma^1}^5)+\| \psiep_0-\phi_0 \|_{L_\bfx^2} .\]
Proof for the case $\sigma=1$ is complete. \\

Next, we prove the case $\sigma=2$. This proof follows the same strategy as  that adopted for the case $\sigma=1$. However, we need two modification because when $\sigma=2$, (\ref{ep}) is energy-critical. First, by (\ref{hyiv}), (\ref{assep}), and (\ref{Tec2}), there exists $\ep_0'>0$ depending on $\phi_0$ such that 
\[ \|\psi_0^\ep -\phi_0\|_{\Sigma^1} \le \f{1}{10}\|\phi_0\|_{\Sigma^1} \text{\qquad and \qquad} \inf\{ \Tmax^\ep | \ep \in (0,\ep_0'] \}>0. \]  
Second, we cannot obtain a positive power $T$ in the bound of $A_1$ (see (\ref{estu1})).
We improve this result based on the constant of Strichartz's estimate, that is, we bound
\begin{equation*}
\begin{split}
\|A_1 \|_{L_t^\infty \Sigma^1([0,T])}&\lesssim  \| u_n^\ep\|_{L_t^\infty\Sigma^1\cap L_t^3\Sigma_{x,z}^{6,2}}( \| \psiep \|_{L_t^\infty\Sigma^1 \cap L_t^3\Sigma_{x,z}^{6,2}}^4 +\| \tpsi_n \|_{L_t^\infty\Sigma^1 \cap L_t^3\Sigma_{x,z}^{6,2}}^4 )(\ep^2+T)^{\f{1}{3}}.\\
\end{split}
\end{equation*}\\
Based on the definition of $M(a,b,T)$ in the proof of Theorem \ref{wpep2}, (\ref{assep}), and (\ref{hyiv}), if we take $\ep \in (0,\ep_0']$ sufficiently small depending on $\|\phi_0\|_{\Sigma^1}$, we have
\[ \|\psiep\|_{L_t^\infty \Sigma^1\cap L_t^3\Sigma_{x,z}^{6,2}([0,T])}\lesssim \|\psiep_0\|_{\Sigma^1} \lesssim \|\phi_0\|_{\Sigma^1} \]
for any small $T\lesssim \min\{ \|\phi_0\|_{\Sigma^1}^{-12},1 \}$. For the estimate of $\tpsi$, we also take the $L_t^\infty \Sigma^1\cap L_t^3\Sigma_{x,z}^{6,2}([0,T])$ norm on (\ref{lmt3}) and apply Stricrartz's estimate and (\ref{Favbdd2}). Accordingly, we have
\begin{equation}
\begin{split}
\|\tpsi_n\|_{L_t^\infty\Sigma^1 \cap L_t^3\Sigma_{x,z}^{6,2}([0,T])} &\lesssim \|\phi_{0,n}\|_{\Sigma^1}+ \|\Fav(\tpsi_n)\|_{L_t^1\Sigma^1([0,T])} \\
&\lesssim \|\phi_{0,n}\|_{\Sigma^1} + \|\tpsi_n\|_{L_t^\infty \Sigma^1}^5T \\
&\lesssim \|\phi_{0,n}\|_{\Sigma^1} + \|\phi_n\|_{L_t^\infty \Sigma^1}^5T.
\end{split}
\end{equation}
If we choose a small $T\lesssim \min\{ \|\phi_0\|_{\Sigma^1}^{-4},1\}$ and sufficiently large $n$, we have

\begin{equation}
\|\tpsi_n\|_{L_t^\infty\Sigma^1 \cap L_t^3\Sigma_{x,z}^{6,2}([0,T])} \lesssim \|\phi_0\|_{\Sigma^1}.
\end{equation}
Thus, choosing $\ep^2\le T\lesssim \min\{ \|\phi_0\|_{\Sigma^1}^{-12}, \|\phi_0\|_{\Sigma^1}^{-4}, 1\} $, we obtain  the bound of $A_1$.\\

$A_2$ can be treated as that depicted for the case $\sigma=1$ by replacing the $L_t^4\Sigma_{x,z}^{4,2}$ norm with the $L_t^3\Sigma_{x,z}^{6,2}$ norm.
\end{proof}

\begin{proof}[Proof of Theorem\ref{main3}-(i)]
Set $T\in (0, \Tmax)$ arbitrarily. Because $\| \phi \|_{L^\infty([0,T], \Sigma^1)} < \infty$ from Theorem \ref{main2}, we can iterate Proposition \ref{3} until the life span reaches $T$.
\end{proof}

\subsection{Proof of theorem\ref{main3}-(ii)}

First, we briefly describe the local well-posedness of (\ref{ep}) in $\Sigma^2$ for all $\sigma \in \N$. We use the function space
\[ \{ \psi \in C([0,T],\Sigma^2)  :  \|\psi\|_{L^\infty([0,T],\Sigma^2)}\le a \} \]
To construct the local solution, we use (\ref{eqH}), Lemma \ref{sigmaz}, and the fact $\Sigma^2$ is a Banach algebra. Then, we have
\begin{equation}
\begin{split}
\| \Psi^\ep[\psi]\|_{L_t^\infty\Sigma^2([0,T])} &\le C_0 \|\psi_0\|_{\Sigma^2}+C\| \psi\|_{L_t^\infty\Sigma^2([0,T])}^{2\sigma+1}T \\
&\le C_0 \|\psi_0\|_{\Sigma^2}+Ca^{2\sigma+1}T.
\end{split}
\end{equation}
Choose $a=2C_0\|\psi_0\|_{\Sigma^2}$ and $T\le \min\{C^{-1}a^{-2\sigma},1 \} $.\\

Similarly to the previous subsection, the following proposition is needed to prove Theorem \ref{main3}-(ii).

\begin{prop}\label{4}
Fix $\sigma\in \N$. Suppose $\psiep_0, \phi_0 \in \Sigma^2$ satisfy
\begin{equation}\label{hyiv2}
\lim_{\ep \to +0 }\| \psiep_0 -\phi_0 \|_{\Sigma^2} = 0.
\end{equation}
Let $\psiep \in C([0,\Tmax^\ep),\Sigma^2)$ be a maximal solution to (\ref{ep}) with the data $\psiep_0$, and $\phi \in C([0, \Tmax), \Sigma^2)$ be a maximal solution to (\ref{lmt}) with the data $\phi_0$. Then, for any $\delta>0$, there exist $\ep_0=\ep_0(\phi_0, \delta)$,  $T=T(\| \phi_0 \|_{\Sigma^2})\in (0,\inf_{\ep \in (0,\ep_0]}\{1,\Tmax^\ep,\Tmax \})$, and $C=C(\| \phi_0 \|_{\Sigma^2})>0$ such that
\begin{equation}\label{thm3}
\| e^{it\cH/\ep^2}\psiep-\phi \|_{L^\infty([0,T], \Sigma^2)}\le C(\delta +\| \psiep_0 - \phi_0 \|_{\Sigma^2})
\end{equation}
holds for all $\ep\in (0,\ep_0]$.
\end{prop}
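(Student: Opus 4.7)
The plan is to adapt the proof of Proposition \ref{3} to the $\Sigma^2$ setting, where the argument is technically cleaner because $\Sigma^2$ is a Banach algebra — this eliminates the need for the mixed-norm Strichartz machinery used in the $\Sigma^1$ case. The local well-posedness of (\ref{ep}) in $\Sigma^2$ sketched just before this proposition, combined with (\ref{hyiv2}), shows that after shrinking $\ep_0$ the solutions $\psiep$ all exist on a common interval $[0,T]$ with $T=T(\|\phi_0\|_{\Sigma^2})$ and satisfy the uniform bound $\|\psiep\|_{L^\infty_t\Sigma^2([0,T])}\lesssim \|\phi_0\|_{\Sigma^2}$.

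Following Proposition \ref{3}, I would regularize by $\phi_{0,n}:=P_{\le n}\phi_0$, where $P_{\le n}$ is the spectral projector onto the first $n+1$ eigenspaces of $\cH_{\bfx,0}=-\h\Delta_\bfx+\f{1}{8}|\bfx|^2$, so that $\phi_{0,n}\in \Sigma^k$ for every $k\in\N$. Let $\phi_n$ denote the corresponding solution to (\ref{lmt}), and set $\tilde\psi_n(t):=e^{-it\cH/\ep^2}\phi_n(t)$. By the intertwining identity $\Fav(e^{it\cH/\ep^2}u)=e^{it\cH/\ep^2}\Fav(u)$ proved in Proposition \ref{3}, $\tilde\psi_n$ satisfies $i\partial_t\tilde\psi_n=(\cH/\ep^2+\cH_z)\tilde\psi_n+\lmd F(t/\ep^2,\tilde\psi_n)$, so Duhamel gives
\[ u_n^\ep(t):=\psiep(t)-\tilde\psi_n(t)= e^{-it(\cH/\ep^2+\cH_z)}(\psi_0^\ep-\phi_{0,n})-i\lmd(A_1+A_2), \]
where $A_1$ integrates the pointwise nonlinearity difference $|\psiep|^{2\sigma}\psiep-|\tilde\psi_n|^{2\sigma}\tilde\psi_n$ and $A_2:=\int_0^t e^{-i(t-s)(\cH/\ep^2+\cH_z)}[F(s/\ep^2,\phi_n(s))-\Fav(\phi_n(s))]\,ds$ is the averaging error.

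The Lipschitz term $A_1$ is handled by the Banach algebra property of $\Sigma^2$ together with Lemma \ref{sigmaz}: for $T\le 1$,
\[ \|A_1\|_{L^\infty_t\Sigma^2([0,T])}\lesssim T\bigl(\|\psiep\|_{L^\infty_t\Sigma^2}^{2\sigma}+\|\tilde\psi_n\|_{L^\infty_t\Sigma^2}^{2\sigma}\bigr)\|u_n^\ep\|_{L^\infty_t\Sigma^2}, \]
which, after shrinking $T$ depending only on $\|\phi_0\|_{\Sigma^2}$, absorbs into the left-hand side to give $\|u_n^\ep\|_{L^\infty_t\Sigma^2}\lesssim\|\psi_0^\ep-\phi_{0,n}\|_{\Sigma^2}+\|A_2\|_{L^\infty_t\Sigma^2}$. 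The heart of the proof is the averaging estimate on $A_2$, carried out with the same integration-by-parts identity (\ref{ibp}) as in Proposition \ref{3}: introducing the primitive $\cF(s,u):=\int_0^s(F(\theta,u)-\Fav(u))\,d\theta$, which is bounded uniformly in $s$ because the integrand is mean-zero $2\pi$-periodic in $\theta$, the identity replaces $F-\Fav$ by $\ep^2$ times a boundary term plus a $\cH_z\cF$ commutator plus a $D_u\cF[\partial_t\phi_n]$ contribution. Each of these is controlled in $\Sigma^2$ by repeated use of the Banach algebra property together with Lemmas \ref{sigmaz} and \ref{sigmaz2}, yielding a bound of the shape $\|A_2\|_{L^\infty_t\Sigma^2}\lesssim \ep^2\bigl(\|\phi_n\|_{L^\infty_t\Sigma^4}+\|\phi_n\|_{L^\infty_t\Sigma^2}^{2\sigma+1}\bigr)\|\phi_n\|_{L^\infty_t\Sigma^2}^{2\sigma}$, where $\|\phi_n\|_{L^\infty_t\Sigma^4}$ is finite by persistence of regularity (with an $n$-dependent constant traceable to the eigenvalues of $\cH_{\bfx,0}$ appearing in $P_{\le n}$).

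The triangle inequality then gives $\|e^{it\cH/\ep^2}\psiep-\phi\|_{L^\infty_t\Sigma^2}\le\|u_n^\ep\|_{L^\infty_t\Sigma^2}+\|\phi_n-\phi\|_{L^\infty_t\Sigma^2}$, and continuous dependence from Theorem \ref{main2} bounds the second term by $\lesssim\|\phi_{0,n}-\phi_0\|_{\Sigma^2}\to 0$ as $n\to\infty$. Given $\delta>0$, I would first pick $n$ so large that this latter quantity is below $\delta/2$, then pick $\ep_0$ so small that the $n$-dependent $\ep^2$ bound on $A_2$ is also below $\delta/2$. The main obstacle is organizing the $A_2$ estimate cleanly in $\Sigma^2$ for arbitrary $\sigma$: the Banach algebra bound on $|u|^{2\sigma}u$ produces $2\sigma$ factors to track, and the factor $\partial_t\phi_n$ appearing in $D_u\cF[\partial_t\phi_n]$ must be rewritten via (\ref{lmt}) as $-i\cH_z\phi_n-i\lmd\Fav(\phi_n)$, which consumes two extra orders of Hermite regularity and is precisely why the spectral truncation $P_{\le n}$ is indispensable.
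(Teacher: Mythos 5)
Your proposal follows the paper's proof essentially verbatim: the same spectral regularization $\phi_{0,n}=P_{\le n}\phi_0$, the same decomposition into $A_1$ and $A_2$, the same use of the Banach algebra property of $\Sigma^2$ to replace the mixed-norm Strichartz spaces for $A_1$, and the same integration-by-parts treatment of $A_2$ via $\cF$ with the loss of two orders of regularity absorbed by the truncation. The argument and the order of quantifiers (first $n$ large, then $\ep$ small) match the paper's, so no further comparison is needed.
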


\begin{proof}
We only prove the case $\sigma=3$. Define $\phi_n, \tpsi_n, u_n, A_1,A_2$, etc.~as those defined in the proof of Proposition \ref{3}. 
Taking $\ep$ sufficiently small, $n$ sufficiently large, and $T$ sufficiently small as $T\lesssim \min\{  \|\phi_0 \|_{\Sigma^2}^{-6},1\}$,  we have
\begin{equation}
\begin{split}
A_1 &\le C \|u_n\|_{\Sigma^2}(\|\psiep\|_{L_t^\infty\Sigma^2}^6+\|\tpsi_n\|_{L_t^\infty\Sigma^2}^6)T\\
&\le C \|u_n\|_{\Sigma^2}(\|\psiep_0\|_{L_t^\infty\Sigma^2}^6+\|\phi_{0,n}\|_{L_t^\infty\Sigma^2}^6)T\\
&\le \f{1}{10} \|u_n\|_{\Sigma^2},
\end{split}
\end{equation}
where we use the Moser-type inequality,
\[ \||f|^{2\sigma}f\|_{\Sigma^2} \lesssim \|f\|_{L^\infty}^{2\sigma}\|f\|_{\Sigma^2}. \]
Thus, 
\[ \|u_n\|_{L_t^\infty\Sigma^2}\lesssim \|\psi_0-\phi_{0,n}\|_{\Sigma^2}+ \|A_2\|_{L_t^\infty\Sigma^2} \]
Estimate of $A_2$ is similar to Proposition \ref{3}. Because $ \Sigma^2 $ is a Banach algebra, we do not use the $L^{12}([0,T], \Sigma_{z,x}^{3,2})$ norm.
\end{proof}

\begin{proof}[Proof of theorem\ref{main3}-(ii)]
Set $T\in (0,\Tmax)$ arbitrarily. Because $\| \phi \|_{L^\infty([-T,T], \Sigma^2)} < \infty$ holds from Theorem \ref{main2}, we can iterate Proposition \ref{4} until the life span reaches $T$.
\end{proof}

\section{Appendix}\label{Ap}

Here, we introduce some properties of operators appearing in this study. For details, refer to \cite{H}. See also \cite{CR} for the 2D case. First, we define a linear operator on $L^2(\R^d)$
\[ H:=-\Delta + \om^2|x|^2  \qquad \om>0.\]
This is harmonic oscillator.\\

When we denote $x=(x_1, \cdots,x_d)$, the Hermite function $h_{\mathbf{k}}$ $(\mathbf{k}=(k_1,\cdots,k_d) \in \N_0^d)$ is defined as follows. When $d=1$, let
\[ H_k(x):=(-1)^ke^{\om x^2} \f{d^k}{dx^k}(e^{-\om x^2}), \]
then, $h_k$ is defined as
\begin{equation}
 h_k(x):=\f{ e^{- \om x^2/2}H_k(x)}{(\om^{k-1/2}\pi^{1/2}2^k k!)^{1/2}}. 
 \end{equation}
When $d\ge2$, 
\begin{equation} h_{\mathbf{k}}(x):=\Pi_{j=1}^d h_{k_j}(x_j). \end{equation}
Then, the Hermite function satisfies 
\[ H h_{\mathbf{k}}=(2|\mathbf{k}|+d)\om  h_{\mathbf{k}}  \qquad |\mathbf{k}|=k_1+\cdots+k_d  \]
and $\{ h_{\mathbf{k}} \}_{\mathbf{k} \in \N_0^d}$ is the Hilbertian bases of $L^2(\R^d)$.\\

\noindent
Particularly, when $d=2$ and $\om=\h$, 
\[ \cH_0:= -\f{1}{2}\Delta_x+\f{1}{8}|x|^2 \qquad  \text{and} \qquad L:=-\f{i}{2}x^\perp\cdot\nabla_x  \]
satisfy
\[ \cH_0 h_{\mathbf{k}}=\h(k_1+k_2+1)h_{\textbf{k}} \qquad \text{and} \qquad  L h_{\textbf{k}}=\h(k_1-k_2)h_{\mathbf{k}} \]
respectively. (Hence, $E^\ep[\psi]$ cannot control $\|\psi\|_{L_z^2\Sigma_x^1}$.)

\section*{Acknowledgments}
 The author is deeply grateful to Kenji Nakanishi, for useful his advice for this study.
 
 The author is supported by JST, the establishment of university fellowships towards the creation of science technology innovation, Grant Number JPMJFS2123.
 
 The author would like to thank Editage (www.editage.jp) for English language editing.

\bibliographystyle{abbrv}
\footnotesize{
\bibliography{NLSavbib}
}

\end{document}